\let\pl\cup 
\let\al\alpha
\let\be\beta
\let\si\sigma
\let\De\Delta
\def\ntbr(#1,#2,#3){{{\fam0 NB}(#1,#2;#3)}}
\def\sB{{\mathcal{B}}}
\def\sT{{\mathcal{T}}}
\let\isom\cong
 \setlist[enumerate]{topsep=2pt,itemsep=2pt,partopsep=0pt,parsep=0pt}
\begin{document}
\newtheorem{theorem}{Theorem}[section]
\newtheorem{corollary}[theorem]{Corollary}
\newtheorem{observation}[theorem]{Observation}
\newtheorem{lemma}[theorem]{Lemma}
\newtheorem{conjecture}[theorem]{Conjecture}
 \theoremstyle{definition}
\newtheorem{claim}{Claim}
\newtheorem{remark}[theorem]{Remark}
%
%
 \let\reallabel\label
 \def\labelshow#1{\reallabel{#1}[#1]}
 \def\showlabels{\let\label\labelshow}
 \long\def\ignore#1{} 
 \long\def\noignore#1{#1}
 \long\def\redstuff#1{\color{red}#1\color{black}}
%
%
 \def\submitversion{%
	\let\altone\noignore \let\alttwo\ignore \let\altthree\ignore
 }%
 \def\detailversion{%
	\let\altone\ignore \let\alttwo\noignore \let\altthree\ignore
 }%
 \def\reddetailversion{%
	\let\altone\ignore \let\alttwo\redstuff \let\altthree\redstuff
 }%
 \detailversion

 \title{One-way infinite $2$-walks in planar graphs}
 \author{Daniel P.\ Biebighauser
  \footnote{%
  Department of Mathematics,
Concordia College, 901 8th St. S., Moorhead, Minnesota 56562.
 E-mail: \texttt{biebigha@cord.edu}. Supported by a sabbatical leave
from Concordia College.}
 \ \ and M.\ N.\ Ellingham
 \footnote{Department of Mathematics, 1326 Stevenson Center,
 Vanderbilt University, Nashville, Tennessee 37240.
 E-mail: \texttt{mark.ellingham@vanderbilt.edu}.
 Supported by National Security Agency grants H98230-04-1-0110 and
H98230-13-1-0233, and Simons Foundation award 245715.  The
United States Government is authorized to reproduce and distribute
reprints notwithstanding any copyright notation herein.}}
 \date{August 27, 2015}

 \maketitle

 \altthree{
 \noindent{\bf
 This version contains additional material (in red), giving some details
of omitted proofs for
 Lemma \ref{thm:forwardbridges},
 Theorem \ref{thm:triangprismlocfin}, and
 Theorem \ref{thm:triangprism}.}

 } 

 \begin{abstract}
 We prove that every $3$-connected $2$-indivisible infinite planar graph
has a $1$-way infinite $2$-walk.  (A graph is \emph{$2$-indivisible} if
deleting finitely many vertices leaves at most one infinite component,
and a \emph{$2$-walk} is a spanning walk using every vertex at most
twice.)  This improves a result of Timar, which assumed local
finiteness.  Our proofs use Tutte subgraphs, and allow us to also
provide other results when the graph is bipartite or an infinite analog
of a triangulation: then the prism over the graph has a spanning $1$-way
infinite path.
 \end{abstract}

\section{Introduction}\label{section:intro}

 For terms not defined in this paper, see \cite{bib:west}.  All
graphs are simple (having no loops or multiple edges) and may be
infinite, unless we explicitly state otherwise. 

 A \emph{cutset} in a graph $G$ is a set $S \subseteq V(G)$ such that $G
- S$ is disconnected.  A \emph{$k$-cut} is a cutset $S$ with $|S| = k$.
A graph is \emph{$k$-connected} if it has at least $k + 1$ vertices and
no cutset $S$ with $|S| < k$.  The \emph{connectivity} of a graph is the
smallest $k$ for which it is $k$-connected.

 The first major result on the existence of hamilton cycles in graphs
embedded in surfaces was by Whitney \cite{bib:whitney} in 1931, who
proved that every $4$-connected finite planar triangulation is
hamiltonian.  Tutte extended this to all $4$-connected finite planar
graphs in 1956 \cite{bib:tutte1}, and gave another proof in 1977
\cite{bib:tutte2}.  Tutte actually proved a more general result, using
subgraphs which have since been called ``Tutte subgraphs'' (defined in
Section \ref{section:stdpieces}).

 To extend these results to infinite graphs, one can look for infinite
spanning paths.
 We say that $v_1 v_2 v_3 \cdots$ is a \emph{$1$-way infinite path}, and
$\cdots v_{-2 }v_{-1} v_0 v_1 v_2 \cdots$ is a \emph{$2$-way infinite
path}, if each $v_i$ is a distinct vertex and consecutive vertices are
adjacent.

 If deleting finitely many vertices in an infinite graph leaves more
than one (two) infinite component(s), then the graph has no $1$-way
($2$-way) infinite spanning path.  Nash-Williams \cite{bib:nashwilliams}
defined a graph $G$ to be \emph{$k$-indivisible}, for a positive integer
$k$, if, for any finite $S \subseteq V(G)$, $G - S$ has at most $k - 1$
infinite components.  He conjectured \cite{bib:nashwilliams,
bib:nashwilliams2} that every $4$-connected $2$-indivisible
($3$-indivisible) infinite planar graph contains a $1$-way ($2$-way)
infinite spanning path.  The $1$-way infinite path conjecture was proved
by Dean, Thomas, and Yu \cite{bib:deanthomasyu} in 1996, and Xingxing Yu
established the $2$-way infinite path conjecture \cite{bib:yu2, bib:yu3,
bib:yu4, bib:yu5, bib:yu6}.

 For connectivity less than $4$, we must be more
flexible in the types of spanning subgraphs we wish to find, since there
are $3$-connected finite planar graphs with no hamilton path.
 Let $k$ be a positive integer.  A \emph{$k$-tree} is a spanning tree
with maximum degree at most $k$.  A \emph{$k$-walk} in a finite graph is
a closed spanning walk passing through each vertex at most $k$ times. 
In a finite graph, a $2$-tree is a hamilton path and a $1$-walk is a
hamilton cycle.

 Barnette \cite{bib:barnette} showed that every $3$-connected finite
planar graph contains a $3$-tree.  Gao and Richter \cite{bib:gaorichter}
later showed that every $3$-connected finite planar graph contains a
$2$-walk.
 Gao, Richter, and Yu \cite{bib:gaorichteryu,bib:gaorichteryu2} refined
this to give information about the locations of vertices included twice
in the $2$-walk.
  The $2$-walk results strengthen Barnette's result, because Jackson and
Wormald \cite{bib:jacksonwormald} showed that in a finite graph a
$k$-walk provides a $(k+1)$-tree.

 It is natural to ask if these results generalize to infinite graphs. 
In 1996, Jung \cite{bib:jung} proved that every locally finite
$3$-connected infinite plane graph with no vertex accumulation
points has a $3$-tree.  A graph is \emph{locally finite} if every vertex
has finite degree.

 A \emph{$1$-way ($2$-way) infinite walk} is a sequence
of vertices $v_1 v_2 v_3 \cdots$ ($\cdots v_{-2} v_{-1} v_0 v_1 v_2 \cdots$)
where consecutive vertices are adjacent.
 A \emph{$1$-way ($2$-way) infinite $k$-walk} is a $1$-way ($2$-way)
infinite spanning walk that passes through each vertex at most $k$
times.  A graph with a $1$-way or $2$-way infinite $k$-walk must be
infinite.

 In his doctoral dissertation, Timar proved the following two theorems. 

\begin{theorem}[Timar {\cite[Theorem II.2.5]{bib:timar}}]%
	\label{thm:timaroneway}
 Let $G$ be a locally finite $3$-connected $2$-indivisible infinite
planar graph.  Then $G$ has a $1$-way infinite $2$-walk.
 \end{theorem}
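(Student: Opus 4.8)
The plan is to reduce the infinite problem to a nested sequence of finite ones, solve each finite piece with a two-terminal version of the Gao--Richter $2$-walk theorem obtained through Tutte subgraphs, and concatenate the pieces into a single outgoing walk. First I would fix a vertex $v_0$ and, using local finiteness, take the finite balls $B_n$ of radius $n$ about $v_0$, so that $B_1\subseteq B_2\subseteq\cdots$ and $\bigcup_n B_n=V(G)$. Since $G$ is connected and locally finite, $G-B_n$ has only finitely many components, each adjacent to $B_n$; by $2$-indivisibility exactly one of them, say $C_n$, is infinite. The ``inside'' sets $D_n=V(G)\setminus V(C_n)$ are therefore finite, they increase (because $C_{n+1}\subseteq C_n$), and $\bigcup_n D_n=V(G)$. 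Planarity then exhibits $G$ as a single end: each $D_n$ induces a near-disk, and the successive differences, namely the subgraphs induced on $D_n\setminus D_{n-1}$ together with their attachment vertices in $D_{n-1}$, form finite annular regions $A_1,A_2,\ldots$ whose union is all of $G$.

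The finite building block I would use is a \emph{two-terminal spanning $2$-walk}: given a finite plane graph that is $3$-connected (or, after the standard reduction, a circuit graph) with two prescribed vertices $x,y$ on its outer face, there is a spanning walk from $x$ to $y$ that traverses every vertex at most twice and traverses $x$ and $y$ exactly once. This is exactly the sort of rooted statement that follows from the Gao--Richter $2$-walk theorem together with the Tutte-subgraph machinery developed later in the paper, using the Gao--Richter--Yu control over which vertices are visited twice to force the two terminals to be simple. Applying it inside each annular region $A_n$, with $x$ an attachment vertex on the boundary shared with $A_{n-1}$ and $y$ an attachment vertex on the boundary shared with $A_{n+1}$, produces a walk segment $W_n$ that spans $A_n$; for the innermost region I take $x=v_0$ as a free starting vertex.

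I would then concatenate. Choosing the terminals so that the exit $y$ of $W_n$ is the entry $x$ of $W_{n+1}$, the juxtaposition $W_1W_2W_3\cdots$ is a single $1$-way infinite walk, since $W_n$ ends and $W_{n+1}$ begins at the same vertex along genuine edges of $G$. It is spanning because every vertex lies in some $A_n$, and it visits each vertex at most twice: a vertex interior to one region is visited at most twice by that region's segment alone, while a shared interface vertex is visited exactly once by each of the two segments meeting there, for a total of two. Here $2$-indivisibility is doing the essential work: at each stage there is only one infinite component into which the walk must escape, so extending monotonically outward strands nothing in a second, unreachable end.

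The main obstacle is the finite building block and the bookkeeping it forces at the interfaces. Several conditions must hold at once: each annular region must retain enough connectivity --- after contracting the completed inside and the infinite outside to single vertices, or after the usual circuit-graph reduction --- for a $2$-walk theorem to apply; the two terminals must be forced to be simple and to lie on the correct boundaries; and the interface vertex sets, which are governed by the $3$-cuts of $G$, must be chosen compatibly so that consecutive segments share precisely one terminal. This is exactly where a ``forward bridge'' refinement of Tutte subgraphs is required, controlling the attachments of bridges that reach toward infinity so that the walk can always be routed outward through a single prescribed vertex while still covering everything behind it at most twice. Making these multiplicities balance to exactly two at every shared vertex, uniformly along the whole exhaustion, is the delicate heart of the argument.
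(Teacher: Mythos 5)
Your reduction breaks at the interfaces, and the gap is not a bookkeeping detail but the central difficulty the paper's machinery exists to overcome. In your scheme the boundary between consecutive regions $A_n$ and $A_{n+1}$ is in general a large vertex set, not a single cutvertex: already for a radial net, the subgraph between $C_i$ and $C_{i+1}$ attaches to $C_{i+1}$ at many vertices, and metric balls behave even worse (the boundary of a ball need not be a cycle, so your ``annular near-disk'' structure is itself unjustified --- the paper gets disk structure from the cycles of a net, via Lemma \ref{thm:locallyfinitenet}, not from balls). Every shared interface vertex is visited by \emph{both} spanning segments, so your count works only if each segment visits \emph{every} interface vertex exactly once. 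No finite $2$-walk theorem delivers that: the Gao--Richter--Yu control (the paper's Lemma \ref{thm:finitetwowalk}) guarantees only that doubled vertices lie in certain $2$- or $3$-cuts, and boundary vertices are precisely the ones lying in small cuts, i.e.\ precisely the ones at risk of being doubled. You can force two prescribed terminals to be simple, not an entire boundary set. Moreover, your building block --- an \emph{open} spanning $x$--$y$ $2$-walk with both terminals simple --- is not the Gao--Richter theorem and does not follow from it by cutting: their result is a \emph{closed} $2$-walk visiting $x$ and $y$ once, and opening it at $x$ yields a closed walk based at $x$, not an $x$--$y$ walk; recovering an $x$--$y$ traversal would re-cover one side.

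The paper resolves this globally rather than region-by-region: it first constructs a single $1$-way infinite $\Delta$-Tutte path $P$ through all of $G$ (forward Tutte paths in the finite truncations $I(C_k)$ via Theorems \ref{thm:extendradialnet} and \ref{thm:extendladdernet}, then a K\H{o}nig's Lemma limit in Theorem \ref{thm:onewaypathlocfin}), \emph{together with} a system of distinct representatives for the nontrivial $P$-bridges, which must itself survive the limit --- this is the assignment-function argument, and nothing in your proposal plays its role. The Tutte property guarantees each bridge is finite with at most three attachments; the SDR guarantees each vertex is the entry point of at most one bridge; and inside each bridge, Lemma \ref{thm:bridgestructure} decomposes it into a plane chain of circuit blocks where Lemma \ref{thm:finitetwowalk} applies, and consecutive blocks of a chain share exactly \emph{one} vertex --- the situation your concatenation needs, but which holds only inside bridges, not between your annuli. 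So each vertex is used once by $P$ and at most once more as an SDR representative or inside a single bridge detour. Your closing paragraph correctly identifies the multiplicity balancing as the heart of the matter, but the local, monotone-outward concatenation you propose has no mechanism to achieve it; the missing ideas are the global Tutte path and the convergent SDR.
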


\begin{theorem}[Timar {\cite[Theorems II.3.3, II.4.3]{bib:timar}}]%
 	\label{thm:timartwoway}
 Let $G$ be a locally finite $3$-connected $3$-indivisible infinite
planar graph.  Then $G$ has a $2$-way infinite $2$-walk.
 \end{theorem}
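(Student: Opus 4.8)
The plan is to reduce the $2$-way statement to the $1$-way statement (Theorem~\ref{thm:timaroneway}) by cutting $G$ into two one-ended pieces along a finite separator and then gluing the resulting $1$-way infinite $2$-walks. The hypothesis of $3$-indivisibility is exactly what makes this feasible: a $2$-way infinite $2$-walk has only two infinite tails, and each tail can meet only one infinite component of $G-S$ in infinitely many vertices, so at most two infinite components can be spanned. This both explains the hypothesis and points to the target --- I want to produce two infinite tails, one escaping into each infinite end.

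First I would use $3$-indivisibility to locate the ends. I would choose a finite set $S\subseteq V(G)$ such that $G-S$ has exactly two infinite components $A$ and $B$, absorbing any finite components of $G-S$ into $S$. Using local finiteness and $3$-connectivity I would then enlarge and ``clean'' $S$ so that its interfaces with $A$ and with $B$ behave like the outer boundaries of finite planar pieces; the goal is to arrange that the two pieces $G_A := G[A\pl S]$ and $G_B := G[B\pl S]$ are each $3$-connected and $2$-indivisible. The degenerate one-ended case, where no finite deletion yields two infinite components, must be set aside at this point and handled separately at the end.

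Next I would apply the $1$-way result. By Theorem~\ref{thm:timaroneway}, each of $G_A$ and $G_B$ has a $1$-way infinite $2$-walk, say $P_A$ and $P_B$. The essential extra control I need is over their \emph{finite} ends: I want $P_A$ and $P_B$ to begin at prescribed vertices on $S$ and to enter $S$ in compatible ways, so that their two initial segments can be merged into one finite walk that uses each vertex of $S$ at most twice in total. Rather than using Theorem~\ref{thm:timaroneway} as a black box, I would reach into the underlying construction: the Tutte-subgraph argument produces, at each finite stage of an exhaustion, a Tutte path or cycle with endpoints on the current outer boundary, and one can insist those endpoints lie on $S$. Splicing the two prescribed initial segments through $S$ then yields a single doubly infinite sequence that runs out along $P_A$ in one direction, crosses $S$, and runs out along $P_B$ in the other.

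Finally I would verify the output is a $2$-way infinite $2$-walk. Spanning is immediate, since $A$, $S$, $B$ partition $V(G)$ and each part is covered by $P_A$, by the splice, or by $P_B$. The delicate point, and the step I expect to be the main obstacle, is the at-most-twice condition on $S$: a separator vertex could be used up to twice by $P_A$ and up to twice by $P_B$, hence four times without coordination. I would therefore prove a finite \emph{interface lemma} --- a Gao--Richter--Yu style $2$-walk through $G[S]$ together with its attachments to $A$ and $B$, with prescribed endpoints matching those of $P_A$ and $P_B$ --- guaranteeing a joint traversal in which every separator vertex is used at most twice overall. Establishing this finite lemma (and, upstream of it, the ``cleaning'' that makes $G_A$ and $G_B$ satisfy the $3$-connected $2$-indivisible hypotheses) is the technical heart; once it is in place, connectivity of the glued walk and the usage count follow by inspection. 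The omitted one-ended case I would treat by the same exhaustion machinery, but with two outward-growing tails into the single end in place of two separate one-ended pieces.
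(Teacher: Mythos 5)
The paper does not actually prove this statement: it is quoted from Timar's dissertation \cite{bib:timar}, and the authors pointedly stop short of the $2$-way setting themselves --- their Conjecture~\ref{thm:twowayconjecture} is the non-locally-finite analog, and they remark that proving it ``would likely require significant work along the lines of'' Yu's five-paper series. So your proposal must be judged on its own merits, and as a reduction of the $2$-way theorem to Theorem~\ref{thm:timaroneway} it has genuine gaps, not just technical debts. First, the ``cleaning'' step does not exist in the form you need: if $S$ separates the two infinite components $A$ and $B$, the pieces $G[A\cup S]$ and $G[B\cup S]$ are in general not $3$-connected, and you cannot make them so while keeping them planar and spanning. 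The correct substitute is relative connectivity --- exactly the $(3,S)$-connectedness and circuit-graph framework of Sections~2--5 --- but once you pass to that framework, Theorem~\ref{thm:timaroneway} is no longer usable as a (near) black box with prescribed endpoints; you are rebuilding the entire Tutte-path-with-SDR apparatus, i.e., there is no reduction left, only the original problem.

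Second, and more fatally, the gluing itself cannot be repaired by a finite ``interface lemma.'' Each of $P_A$, $P_B$ spans its piece, so every vertex of $S$ is already used at least once by each, hence at least twice by the union before any detours into bridges --- and detours are unavoidable, since the walks are built from Tutte paths whose bridges must still be swept up. Worse, Tutte control of $P_A$ in $G_A$ and of $P_B$ in $G_B$ gives no control of the bridges of $P_A\cup P_B$ in $G$: a bridge attaching to both sides of $S$ can acquire attachments on both walks, so the bound of three attachments (the engine of the whole detouring argument, cf.\ Theorem~\ref{thm:onewaytwowalklocfin}) evaporates precisely at the seam. This is why the known proofs (Timar's Theorems II.3.3 and II.4.3, and Yu's work with dividing cycles) construct a \emph{single} $2$-way infinite Tutte subgraph by a K\H{o}nig-type limit over finite truncations growing at both ends, rather than gluing two $1$-way objects. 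Finally, the case you set aside as degenerate is a full-strength case of the theorem: every $2$-indivisible graph is $3$-indivisible, so one-ended graphs (radial nets) must be handled, and producing two disjoint tails escaping through the same radial net is not ``the same exhaustion machinery'' --- the paper's Theorem~\ref{thm:extendradialnet} produces one forward path per truncation, and running two of them disjointly through the same nested cycles, while jointly keeping Tutte control and an SDR, is the hard content of that case, not a remark to defer.
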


 Our main result extends Theorem \ref{thm:timaroneway} by not requiring
local finiteness and by controlling the location of vertices used
twice.

 \begin{theorem}\label{thm:onewaytwowalkintro}
 Let $G$ be a $3$-connected $2$-indivisible infinite planar graph.  Then
$G$ has a $1$-way infinite $2$-walk for which any vertex used twice is in
a $3$-cut of $G$.
 \end{theorem}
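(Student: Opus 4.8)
The plan is to construct the $1$-way infinite $2$-walk as a limit of finite Tutte paths, letting $2$-indivisibility supply a single ``channel to infinity'' and letting the Tutte-subgraph machinery control exactly which vertices are reused. First I would fix a plane embedding of $G$ and an exhaustion of $V(G)$ by finite sets $S_1 \subseteq S_2 \subseteq \cdots$ with $\bigcup_i S_i = V(G)$. Using $2$-indivisibility, after discarding an initial segment I may assume that for every $i$ the graph $G - S_i$ has exactly one infinite component $C_i$, with $C_{i+1} \subseteq C_i$, the finite components of $G - S_i$ being absorbed into later $S_j$. These sets cut out nested finite ``disks'' $D_i$ whose frontier toward $C_i$ is a small separating set. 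The aim is to maintain a finite Tutte path $P_i$ inside $D_i$ whose two ends lie on the frontier between $D_i$ and $C_i$, arranged so that $P_{i+1}$ extends $P_i$ across the annular region $D_{i+1}\setminus D_i$.

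The core step is an annular extension lemma, for which the finite Gao--Richter / Gao--Richter--Yu $2$-walk results and the paper's Tutte-subgraph lemmas (in particular Lemma~\ref{thm:forwardbridges}) are the engine: given the inner frontier of $D_i$ and the outer frontier of $D_{i+1}$, one finds a Tutte path through the annular piece joining a prescribed endpoint on the inner frontier to a prescribed endpoint on the outer frontier, so that every bridge of this path attaches at most three vertices and no bridge reaches into $C_{i+1}$. Concatenating these over all $i$ yields a $1$-way infinite spine $P = \bigcup_i P_i$ that visits every vertex of every $D_i$, and hence, once the bridges are absorbed, spans all of $G$.

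To turn the spine into a $2$-walk I would process its bridges recursively. Each bridge $B$ meets $P$ in at most three vertices, so I detour off the spine into $B$, traverse a Tutte structure of $B$, and return, reusing only the attachment vertices of $B$; inside $B$ the same doubling is applied to the subbridges, a recursion that terminates on each finite bridge and is pushed forward stage by stage on the bridges leading toward $C_i$. Every vertex then lies on the spine once and is revisited at most once, as an attachment of a single bridge, so it is used at most twice. Moreover a reused vertex is an attachment vertex of a nontrivial bridge, and by $3$-connectivity together with the Tutte property such a bridge has exactly three attachments, which form a $3$-cut of $G$; this gives precisely the claimed location property for twice-used vertices.

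The hard part, and what pushes this beyond Timar's locally finite setting, is the absence of local finiteness: a vertex of infinite degree cannot be enclosed in a finite disk in the naive way, and the frontiers of the $D_i$ need not be finite cycles. I expect the main obstacle to be setting up the exhaustion and the annular extension so that these separating frontiers remain finite and well-structured despite infinite-degree vertices, and so that the endpoints of consecutive Tutte paths can always be matched. Here $2$-indivisibility does the real work: almost all edges at an infinite-degree vertex run into the single infinite component $C_i$ and are absorbed by the spine passing through $C_i$, leaving only finitely many finite bridges to detour, so such a vertex can be placed permanently on the spine the moment it is reached and reused at most once. A companion convergence argument, built into the exhaustion, is then needed to confirm that the concatenated walk is genuinely spanning, with no vertex stranded in a bridge that is repeatedly deferred toward infinity.
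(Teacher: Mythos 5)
Your overall architecture---finite Tutte paths glued across annular regions, a K\H{o}nig-type limit, then detours into the bridges of the resulting spine---is the same shape as the paper's proof, but two steps have genuine gaps. The first is the claim that every vertex ``is revisited at most once, as an attachment of a single bridge.'' A vertex of the spine can be an attachment of several nontrivial bridges at once, and nothing in your construction selects at most one bridge to enter at each vertex; if two bridges both detour at $v$, then $v$ is used three times. This is precisely what the paper's system of distinct representatives (SDR) is for: every finite Tutte path is built together with an injective assignment of entry vertices to nontrivial bridges (the SDR Standard Pieces), and, crucially, the K\H{o}nig's Lemma argument in Theorem \ref{thm:onewaypathlocfin} is run not only on the paths but also on the assignment functions, so that the SDRs converge along with the paths---the paper identifies this combination as its main new technical contribution. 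Relatedly, your recursive ``doubling'' inside a bridge does not control \emph{where} reuse happens. The paper instead deletes two attachments of the bridge, decomposes the remainder as a plane chain of circuit blocks (Lemma \ref{thm:bridgestructure}), and invokes the Gao--Richter--Yu closed $2$-walk lemma (Lemma \ref{thm:finitetwowalk}), whose explicit location statement (internal $3$-cuts, or $2$-cuts on the outer cycle that extend by an attachment to $3$-cuts of $G$) is what delivers the claimed $3$-cut property; a bare recursion gives no such guarantee and could also reuse attachment vertices at deeper levels.

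The second gap is the infinite-degree case, which you correctly flag as the hard part but then dispatch with the remark that edges at an infinite-degree vertex are ``absorbed by the spine.'' That is not a construction: the frontier of any finite disk containing such a vertex must meet infinitely many of its edges, so your exhaustion cannot produce finite, well-structured frontiers there, and the annular extension lemma cannot even be formulated. The paper resolves this by structure theory rather than exhaustion: Lemmas \ref{thm:3connectedimplies2indivisible} and \ref{thm:twoinfinite} show there are at most two infinite-degree vertices, and Lemmas \ref{thm:infiniteblock} and \ref{thm:infiniteblocks} produce a locally finite spanning subgraph $H$ that either carries a ladder net with the infinite-degree vertices on $\partial N$ (and is $(3,\partial N)$-connected) or is a $1$-way infinite plane chain of circuit blocks; all limiting Tutte-path work is done in $H$, and the deleted edges are restored afterward via Remark \ref{thm:2att3att}. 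Note also that the paper never arranges $P_{i+1}$ to extend $P_i$ directly: it proves the finite paths are \emph{forward} (Theorems \ref{thm:extendradialnet} and \ref{thm:extendladdernet}) and extracts common initial segments by K\H{o}nig's Lemma, which sidesteps the endpoint-matching problem your direct extension scheme would face. Without the SDR bookkeeping, the Gao--Richter--Yu location lemma, and the structural reduction for infinite degrees, the proof does not close.
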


 Our methods differ from those of Timar.  To avoid local finiteness we
use structural results, similar to those in \cite{bib:deanthomasyu}, for
graphs with infinite degree vertices.  To build the skeleton of the
$2$-walk and control the location of vertices that are used twice, we
use Tutte subgraphs in a way similar to \cite{bib:gaorichteryu}.
 We follow a systematic approach to using Tutte subgraphs that we have
developed in a survey paper \cite{bib:biebighauserellinghamtutte}, in
preparation.
 Our methods also provide other results when $G$ is bipartite or an
infinite analog of a triangulation (Theorems \ref{thm:bipartiteprism}
and \ref{thm:triangprism}).

 Timar \cite[Lemma I.2.14]{bib:timar} verified that Jackson and
Wormald's proof that  a $k$-walk provides a $(k+1)$-tree applies for
infinite graphs, and so Theorem \ref{thm:onewaytwowalkintro} allows us
to prove a result similar to Jung's.  We drop local finiteness, but add
$2$-indivisibility as both a hypothesis and a conclusion.

 \begin{corollary}
 Let $G$ be a $3$-connected $2$-indivisible infinite planar graph.  Then
$G$ has a $2$-indivisible $3$-tree.
 \end{corollary}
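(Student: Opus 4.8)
The plan is to deduce the corollary from Theorem \ref{thm:onewaytwowalkintro} together with the infinite version of the Jackson--Wormald conversion of walks into trees. First I would apply Theorem \ref{thm:onewaytwowalkintro} to obtain a $1$-way infinite $2$-walk $W = w_1 w_2 w_3 \cdots$ in $G$; being a $2$-walk, $W$ is spanning and visits each vertex at most twice. As noted in the introduction, Timar \cite{bib:timar} checked that the Jackson--Wormald argument \cite{bib:jacksonwormald} turning a $k$-walk into a $(k+1)$-tree goes through for infinite graphs, so $W$ yields a spanning tree $T$ of $G$ with maximum degree at most $3$, i.e.\ a $3$-tree. The only remaining task is to guarantee that $T$ can be chosen $2$-indivisible, and this is where the real work lies.

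The key preliminary observation is that the subgraph $H \subseteq G$ consisting of the edges actually traversed by $W$ is both locally finite and $2$-indivisible. Local finiteness is immediate, since a vertex visited at most twice has degree at most $4$ in $H$. For $2$-indivisibility, given a finite $S \subseteq V(G)$ I would pick $M$ so large that $w_k \notin S$ for all $k \ge M$; this is possible because the finitely many vertices of $S$ occupy only finitely many positions of the sequence. Deleting $S$ then splits $W$ into finitely many finite segments together with the single infinite tail $w_M w_{M+1} \cdots$, and the components of $H - S$ are exactly the unions of these segments glued along shared vertices. Only the component meeting the infinite tail can be infinite, so $H - S$ has exactly one infinite component; hence $H$ is $2$-indivisible. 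Since $H$ is locally finite, this says precisely that $H$ has a single end.

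It therefore suffices to produce a $3$-tree $T \subseteq H$ that is itself $2$-indivisible (equivalently, since $T$ is locally finite of maximum degree at most $3$, a $3$-tree with a single end). This is the step I expect to be the main obstacle: an arbitrary spanning tree of a one-ended locally finite graph can have many ends (think of a comb inside a grid), so the bare Jackson--Wormald output need not be $2$-indivisible, and $2$-indivisibility must be engineered rather than inherited for free. My plan to handle this is to build $T$ so that the inclusion $T \hookrightarrow H$ is end-faithful, i.e.\ so that $T$ separates its own rays no more than $H$ does; concretely, I would run the infinite Jackson--Wormald reduction while insisting that the infinite tail of $W$ survive in $T$ as a single ray that is cofinal with the walk, so that after deleting any finite $S$ every infinite branch of $T$ is routed into the one component containing that tail. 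Given such a $T$, the argument of the previous paragraph applies verbatim: $T - S$ has exactly one infinite component for every finite $S$, so $T$ is a $2$-indivisible $3$-tree, completing the proof.

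Finally, I would double-check the two places where infiniteness could cause trouble: that the Jackson--Wormald degree bookkeeping is unaffected by vertices of infinite degree in $G$ (it only sees the bounded-degree subgraph $H$), and that the cofinal-ray requirement is compatible with the degree-$3$ constraint at the tail vertices, where a vertex used twice by $W$ might otherwise accumulate degree. Neither should obstruct the construction, but they are the details I would verify carefully before declaring $T$ both a $3$-tree and $2$-indivisible.
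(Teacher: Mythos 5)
Your reduction is set up correctly, and your instinct that the bare Jackson--Wormald output need not be $2$-indivisible is sound: one can build a $1$-way infinite $2$-walk that alternately extends two rays $a_1a_2a_3\cdots$ and $b_1b_2b_3\cdots$, traveling between their current tips through fresh connector vertices each time, and the resulting first-occurrence tree contains both rays disjointly and so has two ends. But the step you yourself flag as ``where the real work lies'' is exactly where your proposal stops being a proof. ``Run the infinite Jackson--Wormald reduction while insisting that the infinite tail of $W$ survive in $T$ as a single ray that is cofinal with the walk'' is not a construction: the tail of $W$ is a walk, not a ray --- it may revisit every vertex twice and oscillate indefinitely, as in the example above --- so there is no ray there to ``survive,'' and nothing in the Jackson--Wormald bookkeeping lets you impose one. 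To make this work you would have to exhibit a suitable ray $R$ in $H$ and an attachment scheme that spans $H$, keeps every degree at most $3$, and hangs only finite trees off $R$; for an arbitrary abstract $2$-walk that is a genuine piece of mathematics (comparable to an end-faithful spanning tree argument with a degree constraint), and it is entirely absent from your writeup.

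The gap is avoidable because you treated the $2$-walk as a black box when you did not have to. The walk actually produced in Theorems \ref{thm:onewaytwowalklocfin} and \ref{thm:onewaytwowalk} (hence in Theorem \ref{thm:onewaytwowalkintro}) is a $1$-way infinite Tutte path $P$ with a finite closed detour $W_a$ spliced in at each SDR representative $a$; every detour lies inside a single $P$-bridge, every $P$-bridge is finite (this is proved inside Theorem \ref{thm:onewaypathlocfin}), and a detour meets $P$ only at its representative, since the other vertices of the bridge's chain of blocks are internal to the bridge. Consequently, along $P$ each step reaches a vertex not previously visited, so the first-occurrence tree $T$ of this walk contains $P$ itself, with only finitely many finite trees hanging from any finite subpath of $P$ (at most one per representative). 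For any finite $S$, every component of $T-S$ other than the one containing the tail of $P$ is then a finite subpath of $P$ together with finitely many finite hanging trees, so $T$ is $2$-indivisible with no end-faithfulness machinery at all. This is evidently what the paper has in mind when it derives the corollary in one line from Theorem \ref{thm:onewaytwowalkintro} and Timar's verification of Jackson--Wormald for infinite graphs; your diagnosis that $2$-indivisibility is not free for an arbitrary $2$-walk is correct (and the paper glosses over it), but your proposed repair remains an unproven plan, whereas exploiting the structure of the paper's walk closes the gap immediately.
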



We also make the following natural conjecture.

\begin{conjecture}\label{thm:twowayconjecture}
Let $G$ be a $3$-connected $3$-indivisible infinite planar graph.  Then $G$ has a $2$-way infinite $2$-walk.
\end{conjecture}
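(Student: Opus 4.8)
The statement is the $2$-way analog of Theorem \ref{thm:onewaytwowalkintro}, and my plan is to deduce it by running the machinery of that theorem along both tails of the desired walk. I would begin with a structural dichotomy forced by $3$-indivisibility. Either $G$ is already $2$-indivisible, or there is a finite set $S \subseteq V(G)$ whose deletion leaves exactly two infinite components $C_1$ and $C_2$ (the finitely many finite components of $G - S$ can be absorbed into the pieces built around $S$). The genuinely $3$-indivisible case is the substantive one: here the two tails of a $2$-way infinite $2$-walk should run out into $C_1$ and $C_2$ respectively, so I would split $G$ along $S$ into two infinite pieces $H_1 \supseteq C_1$ and $H_2 \supseteq C_2$, each containing $S$, find a $1$-way infinite $2$-walk in each piece that emanates from $S$, and splice the two rays at $S$ to form a single $2$-way infinite $2$-walk.

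The difficulty is that Theorem \ref{thm:onewaytwowalkintro} cannot be applied to $H_1$ and $H_2$ as they stand: cutting along $S$ destroys $3$-connectivity, and I need each walk not merely to exist but to interface with $S$ in a prescribed way. I would therefore prove a \emph{rooted} strengthening of the $1$-way theorem inside the same Tutte-subgraph framework of \cite{bib:biebighauserellinghamtutte}, fixing a starting vertex or a prescribed edge at $S$ and controlling the walk's behavior near $S$. To restore the hypotheses of the rooted theorem I would use the standard auxiliary-graph device: adjoin an apex vertex adjacent to $S$ (or otherwise identify $S$) so as to obtain a $3$-connected $2$-indivisible planar graph, apply the rooted version there, and then translate the resulting walk back to $H_i$. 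Getting the rooted version to respect planarity and the $3$-cut conclusion simultaneously is where most of the new technical work would lie.

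The crux is the bookkeeping at the interface $S$. In each half the vertices used twice are, by the strengthened conclusion, confined to $3$-cuts, and the vertices of $S$ are exactly the candidates for double use; so after splicing I must guarantee that the combined number of traversals of each $s \in S$, summed over $H_1$ and $H_2$, never exceeds two. This is precisely why the control on the location of doubly-used vertices in Theorem \ref{thm:onewaytwowalkintro} is indispensable, and I expect that a rooted version tracking exactly how each vertex of $S$ is entered and left on both sides will let the two rays be joined consistently. Managing this interface so that it is planar and legal from both halves at once is the main obstacle I anticipate.

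Finally, the degenerate branch where $G$ is actually $2$-indivisible must be dispatched separately, since there $G$ has a single infinite end and cannot be split into two infinite halves. Here both tails of the walk are forced into the same end, as already happens for a ray $v_1 v_2 v_3 \cdots$, which admits the $2$-way $2$-walk $\cdots v_3 v_2 v_1 v_2 v_3 \cdots$. I would handle this case by producing two essentially disjoint rays running out to the common end, covering the finite core between their roots, and amalgamating them; this amounts to a one-sided application of the rooted theorem together with a finite patching argument, and I expect it to be more routine than the two-component case but still to require care to keep every vertex within its budget of two.
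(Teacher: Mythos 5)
Note first that the statement you set out to prove is Conjecture \ref{thm:twowayconjecture}: the paper contains no proof of it, and the authors explicitly remark that proving it would likely require significant work along the lines of Yu's series \cite{bib:yu2, bib:yu3, bib:yu4, bib:yu5, bib:yu6}. So there is no paper proof to compare against, and your proposal must stand on its own; as written it is an outline whose load-bearing steps are missing. The most concrete failure is the reduction device: adjoining an apex vertex adjacent to all of $S$ (or identifying $S$ to a point) does not in general preserve planarity, since the finite cutset $S$ need not lie on a common face of the side $H_i$, and the entire machinery you want to invoke --- circuit graphs, nets, Theorem \ref{thm:onewaypathlocfin} --- is intrinsically planar, so this step cannot be waved through. (One part of your dichotomy is sound: if $G-S$ has exactly two infinite components $C_1, C_2$, each side is indeed $2$-indivisible, since a finite $T$ splitting $C_1$ into two infinite pieces would exhibit three infinite components of $G-(S\cup T)$.) Moreover, the ``rooted strengthening'' you posit is far stronger than anything the Tutte-path technology delivers: Theorem \ref{thm:gry} prescribes two endpoints and one intermediate vertex or edge, not the behavior of a path on an entire finite set $S$. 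Coordinating which half covers which vertices of $S$, with traversal multiplicities summing to at most two across the splice, is exactly the hard content; and the $3$-cut control in Theorem \ref{thm:onewaytwowalkintro} would be relative to the modified graph built from $H_i$, not to $G$, so it does not by itself prevent the doubly-used vertices of the two half-walks from colliding in the shared zone near $S$.

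The degenerate one-end branch is also not routine. You correctly avoid the naive doubling $\cdots v_3 v_2 v_1 v_2 v_3 \cdots$, which turns a $2$-walk into a $4$-walk at every vertex the ray uses twice, but your substitute --- two essentially disjoint spanning rays amalgamated over a finite core, with total multiplicity at most two everywhere --- is a spanning-structure assertion of roughly the same order of difficulty as the conjecture itself, and no lemma in the paper supplies it. Finally, even granting a splice, the limit machinery is one-directional: the K\H{o}nig-type argument of Theorem \ref{thm:onewaypathlocfin} works because the finite approximations are forward paths rooted at a fixed initial vertex $u$, so their initial segments stabilize and the SDR assignment functions can be made to converge; a $2$-way walk has no root, and making approximations converge in both directions simultaneously while keeping the SDRs consistent is precisely the kind of new structural theory (dividing cycles, two-sided nets) that Yu had to develop over several papers for the $4$-connected path analogue. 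In short, your proposal correctly identifies the obstacles but overcomes none of them; it is a research plan, not a proof, for a statement the paper deliberately leaves as a conjecture.
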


 Proving Conjecture \ref{thm:twowayconjecture} would likely require
significant work along the lines of \cite{bib:yu2, bib:yu3, bib:yu4,
bib:yu5, bib:yu6}.

 Section \ref{section:definitions} includes some additional definitions
and lemmas, especially for connectivity.
 Section \ref{section:stdpieces} addresses Tutte subgraph techniques.  
 Section \ref{section:structural} discusses structural results for
$3$-connected $2$-indivisible infinite planar graphs.
 Section \ref{section:pathsandwalks} proves the main result, and
Section \ref{section:prisms} gives stronger theorems for
bipartite graphs and analogs of triangulations.  

\section{Definitions and Connectivity}\label{section:definitions}

 If $G$ is a connected finite plane graph, $X_G$ denotes the \emph{outer
walk} of $G$, the closed walk bounding the infinite face.
  If $G$ is $2$-connected, then we also call $X_G$ the \emph{outer
cycle} of $G$.
  We use $X_G$ to denote both a walk and a subgraph; if $G$ is
isomorphic to $K_2$ or $K_1$, then the subgraph $X_G$ is just $G$
itself.

 A \emph{$uv$-path} $P$ is a path from $u$ to $v$; $P^{-1}$ denotes the
reverse $vu$-path.
 If $P$ is a (possibly infinite) path and $x, y \in V(P)$, then $P[x,
y]$ denotes the subpath of $P$ from $x$ to $y$.
 Given a closed walk $W$ in a plane graph bounding an open disk (such as
a cycle or facial boundary walk), the subwalk $W[x,y]$ clockwise from
$x$ to $y$ is well-defined provided each of $x$ and $y$ occurs exactly
once on $W$.
 If $x = y$, then $W[x, y]$ or $P[x, y]$ means the single vertex $x =
y$.

 A \emph{block} is a $2$-connected graph or a graph isomorphic to $K_2$
or $K_1$.
 A \emph{block of a graph} is a maximal subgraph that is a block.  Every
graph has a unique decomposition into edge-disjoint blocks.  A block
isomorphic to $K_2$ is a \emph{trivial} block.  A vertex $v$ of a graph
is a \emph{cutvertex} if $\{v\}$ is a cutset of the graph.

 Let $G$ be a connected graph, and $n \ge 0$ an integer.
  Suppose that $G$ has finite blocks $B_1, B_2, \dots, B_n$
and vertices $b_0, b_1, b_2, \dots, b_{n-1}, b_n$ in $G$ such that
$G = b_0$ (if $n=0$) or
$b_0 \in V(B_1) - \{b_1\}$, $b_n \in V(B_n) - \{b_{n-1}\}$, $b_i \in
V(B_i) \cap V(B_{i+1})$ for $i = 1, 2, \dots, n-1$, and $G =
\bigcup_{i=1}^n B_i$.
 We say that $G$ is a \emph{chain of blocks} (some sources call this a
\emph{linear graph}) and that $(b_0, B_1, b_1, B_2, b_2, \dots, b_{n-1},
B_n, b_n)$ is a \emph{block-decomposition} of $G$.  In this case, $b_1,
b_2, \dots, b_{n-1}$ are precisely the cutvertices of $G$.  A chain of
blocks is a \emph{plane chain of blocks} if it is embedded in the plane
so that no block is embedded inside any other block.  In a plane chain
of blocks $G$, any internal face in a block will be a face of $G$.

 If $G$ has finite blocks $B_1, B_2, \dots$ and vertices $b_1, b_2,
\dots$ such that $b_i \in V(B_i) \cap V(B_{i+1})$ for every $i$, $G =
\bigcup_{i=1}^{\infty} B_i$, and $G$ is embedded in the plane so that no
block is embedded inside any other block, we say that $G$ is a
\emph{$1$-way infinite plane chain of finite blocks}.  (We will have no
need to specify an initial vertex $b_0$.)  In this case, we define $X_G$
to be the $2$-way infinite walk traversing the outer face of $G$ in the
clockwise direction.

 If $P$ is an $ab$-path in $G$, then the \emph{chain of blocks along $P$
in $G$} is the minimal union $K$ of blocks of $G$ that contains $P$ (or $K=a$ if $a=b$).  If $K=G$ we say that $G$ is a \emph{chain of
blocks along $P$}.
 We can write $K=(b_0, B_1, b_1, B_2, b_2, \dots, b_{n-1}, B_n, b_n)$
where $n \ge 0$, $b_0 = a$, and $b_n = b$; then $b_0, b_1, \dots, b_n$
are distinct vertices of $P$ and each $B_i$ contains an edge of $P$.

  A \emph{bridge of $H$}, or \emph{$H$-bridge}, in $G$ is either (a) an
edge of $E(G) - E(H)$ with both ends in $H$ (a \emph{trivial} bridge),
or (b) a component $C$ of $G - V(H)$ together with all of the edges with
one end in $C$ and the other in $H$.
  If $J$ is an $H$-bridge in $G$, then $E(J) \cap E(H) =
\emptyset$, $V(H) \cap V(J)$ is the set of \emph{attachments} of $J$
on $H$, and 
 $V(J)-V(H)$ is the set of \emph{internal vertices} of $J$ as an
$H$-bridge.  Let $A_G(H)$ be the set of attachments of all $H$-bridges
in $G$, or in other words, the vertices of $H$ incident with an edge of
$E(G)-E(H)$.

 We often use a property slightly weaker than being $k$-connected.
 Let $G$ be a graph, $k$ a positive integer, and $\emptyset \ne S
\subseteq V(G)$.
 We say $G$ is \emph{$k$-connected relative to $S$}, or \emph{$(k,
S)$-connected}, if for every $T \subseteq V(G)$ with $|T| < k$, every
component of $G - T$ contains at least one vertex of~$S$.  For a
subgraph $H$ of $G$ we say $G$ is \emph{$(k,H)$-connected} if it is
$(k, V(H))$-connected.
 Similar definitions have been used in earlier papers such as
\cite{bib:thomasyuzang,bib:yu2}, but ours differs in that we do not
require $G$ to be connected or $T$ to be a cutset.

 Our definition has a number of consequences that we use later.
 We omit the straightforward proofs.   Part \ref{ksz} may be regarded as
an alternative (perhaps more intuitive) definition; its proof uses
\ref{ksy} and Menger's Theorem, and it helps to prove later parts.

 \begin{lemma}\label{thm:ksconnected}
 Let $G$ be a graph, $k$ a positive integer, and $\emptyset \ne S
\subseteq V(G)$.

\begin{enumerate}[(a)]

 \item\label{ksx}
 If $S=V(G)$ then $G$ is always $(k,S)$-connected.  If $S \ne V(G)$ and
$G$ is $(k,S)$-connected then $|S| \ge k$.

 \item\label{ksy}
 $G$ is $(k,S)$-connected if and only if
 the graph obtained from $G$ by adding a new vertex $r$ adjacent to all
vertices of $S$ has no cutset $T$ with $|T|<k$ and $r \notin T$.

 \item\label{ksz}
 $G$ is $(k,S)$-connected if and only if
 for every $v \in V(G)-S$
there are $k$ paths, disjoint except at $v$, from $v$ to $S$ in $G$.

\item\label{ksa} If $G$ is $(k, S)$-connected, $G$ is a spanning
subgraph of $G'$, $1 \le k' \le k$, and $S \subseteq S'
\subseteq V(G)$, then $G'$ is $(k', S')$-connected.

\item\label{ksb} Adding or deleting edges with both ends in $S$ does not
affect whether or not $G$ is $(k, S)$-connected.

\item\label{ksc} Suppose $G$ is $k$-connected and $S \subseteq V(G)$
with $|S| \geq k$. Let $H$ be the union of $S$, zero or more $S$-bridges
in $G$, and an arbitrary set of edges with both ends in $S$. Then $H$ is
$(k, S)$-connected.  As a special case, $G$ is $(k, S)$-connected.

\item\label{ksd} Suppose $G$ is $(k, S)$-connected, and $H$ is a
subgraph of $G$.  Let $S_H = A_G(H) \cup (S \cap V(H))$.  Then $H$ is
$(k, S_H)$-connected.


 \item\label{kse} Suppose $G$ is $(k, S)$-connected and $H$ is a
subgraph of $G$ with $S \subseteq V(H)$.
 If $0 \le k' \le k$ and $H$ is $k'$-connected, then $G$ is also
$k'$-connected.
 Moreover, if $H \isom K_k$ and $V(H) \ne V(G)$ then $G$ is
$k$-connected.
 %
 %


\item\label{ksf}
 Construct $G'$ by adding to $G$ a set $R$ of new vertices, each
adjacent only to vertices in $R \cup S$.
 Then $G$ is $(k,S)$-connected if and only if $G'$ is $(k, R \cup
S)$-connected.

\end{enumerate}
\end{lemma}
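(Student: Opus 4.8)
The plan is to prove the parts in the stated order, using as the main device the auxiliary graph $G^+$ obtained from $G$ by adding a vertex $r$ adjacent to all of $S$, and reducing the harder parts to the fan version of Menger's theorem. Part \ref{ksx} is immediate from the definition: if $S = V(G)$ then every (necessarily nonempty) component of $G - T$ meets $S$, while if $S \ne V(G)$ and $|S| < k$ then taking $T = S$ exhibits a component of $G - S$ disjoint from $S$, so $(k,S)$-connectivity forces $|S| \ge k$. For part \ref{ksy} I would observe that a component of $G^+ - T$ not containing $r$ is exactly a component of $G - T$ that misses $S$, since $r$ absorbs every surviving vertex of $S$ into its own component. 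Thus a cutset $T$ of $G^+$ with $|T| < k$ and $r \notin T$ corresponds precisely to a set $T$, $|T| < k$, leaving an $S$-free component of $G - T$, which gives the equivalence.

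Part \ref{ksz} is the linchpin for the harder parts, and I would derive it from \ref{ksy} via Menger. For $v \in V(G) - S$ the vertices $v$ and $r$ are non-adjacent in $G^+$, so the maximum number of internally disjoint $v$-$r$ paths equals the minimum order of a $v$-$r$ separator; such a separator is a cutset of $G^+$ avoiding $r$, so by \ref{ksy} it has order at least $k$ exactly when $G$ is $(k,S)$-connected. Deleting $r$ translates internally disjoint $v$-$r$ paths into $k$ paths from $v$ to $S$ disjoint except at $v$, and the translation reverses. Parts \ref{ksa} and \ref{ksb} are monotonicity statements: since $G$ is a spanning subgraph of $G'$, every component of $G' - T$ is a union of components of $G - T$, so shrinking $k$ and enlarging $S$ only makes the covering condition easier, giving \ref{ksa}; for \ref{ksb} one direction follows from \ref{ksa}, and for the other I would use that, since the deleted edge joins two vertices of $S$, breaking it can only split a component of $G - T$ into pieces that each still contain an endpoint of that edge and hence still meet $S$.

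The two parts I expect to require the most care are \ref{ksc} and \ref{ksd}, and I would prove both through the path characterization \ref{ksz}. For \ref{ksc}, given an internal vertex $v$ of a bridge $J \subseteq H$, the fan lemma in the $k$-connected graph $G$ (using $|S| \ge k$) yields $k$ paths from $v$ to $S$ disjoint except at $v$; truncating each at its first vertex of $S$ keeps it inside the component of $G - S$ containing $v$, hence inside $J \subseteq H$, and preserves disjointness, so \ref{ksz} gives that $H$ is $(k,S)$-connected. For \ref{ksd} I would take a component $C$ of $H - T$ with $|T| < k$, note that it lies in a component $D$ of $G - T$, and use that $D$ meets $S$: following a path in $G - T$ from a vertex of $C$ toward $S$ and stopping at the first edge leaving $E(H)$ (or the first vertex outside $V(H)$) lands on a vertex of $A_G(H)$, while if the path never leaves $H$ it reaches a vertex of $S \cap V(H)$; either way $C$ meets $S_H = A_G(H) \cup (S \cap V(H))$.

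For \ref{kse}, a cutset $T$ of $G$ with $|T| < k' \le k$ would leave at least two components of $G - T$ each meeting $S \subseteq V(H)$, yet $H - T$ is connected (as $|T| < k'$ and $H$ is $k'$-connected) and meets both components, a contradiction, so $G$ is $k'$-connected; for the ``moreover'' clause, since $V(H) \ne V(G)$ part \ref{ksx} gives $|S| \ge k$ and hence $S = V(H)$, and because $V(H)$ induces a $K_k$ whose surviving vertices stay in one component of $G - T$ for any $|T| < k$, no such cutset can exist, giving $k$-connectivity. Finally, \ref{ksf} follows from the fact that each vertex of $R$ attaches only within $R \cup S$: a component of $G - T$ missing $S$ acquires no new neighbours in $G'$ and so remains a component of $G' - T$ missing $R \cup S$, and conversely a component of $G' - T$ missing $R \cup S$ lies in $V(G)$ and is a component of $G - T$ missing $S$, which yields the equivalence. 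The main obstacle throughout is setting up \ref{ksz} cleanly, since the truncation-and-attachment arguments for \ref{ksc} and \ref{ksd} depend on having the right ``$k$ disjoint paths to $S$'' formulation in hand.
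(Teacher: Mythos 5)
Your proof is correct and takes essentially the same approach the paper intends: the paper omits these proofs as straightforward, but explicitly notes that part \ref{ksz} is proved from part \ref{ksy} together with Menger's Theorem and then used to establish the later parts, which is exactly the architecture of your argument. The individual steps — the truncated fan for \ref{ksc}, the first-exit vertex landing in $A_G(H)$ for \ref{ksd}, and the surviving-clique argument for the ``moreover'' clause of \ref{kse} — all check out.
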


 To prove a statement for $3$-connected finite planar graphs, one often
proves it for the following larger class of graphs.
 A \emph{circuit graph} is an ordered pair $(G, C)$ where $G$ is a
finite graph, $C$ is a cycle in $G$ that bounds a face in some plane
embedding of $G$, and $G$ is $(3,C)$-connected.  By Lemma
\ref{thm:ksconnected}\ref{kse}, $G$ is automatically $2$-connected.
 Frequently $C$ is the outer cycle of $G$.
 Barnette \cite{bib:barnette} originally defined a circuit graph as
the subgraph inside a cycle in a $3$-connected plane graph
--- this definition can be shown to be equivalent to ours using Lemma
\ref{thm:ksconnected}\ref{ksc}, \ref{kse} and \ref{ksf}.
 Also, by Lemma \ref{thm:ksconnected}\ref{ksc}, if $G$ is a $3$-connected
finite plane graph and $C$ is any facial cycle of $G$, then $(G, C)$ is
a circuit graph.

 It is convenient to define a finite plane graph $G$ to be a
\emph{circuit block} if either $G$ is an edge, or $(G, X_G)$ is a
circuit graph. A (possibly $1$-way infinite) \emph{plane chain of
circuit blocks} has the obvious meaning.

 Lemmas \ref{thm:circuitinduction} and \ref{thm:chaindelete4} below give
some useful inductive properties of circuit graphs, or more general
$(3,S)$-connected plane graphs.
 Lemma \ref{thm:circuitinduction} follows from Lemma
\ref{thm:ksconnected}\ref{ksa} and \ref{ksd}, and generalizes
\cite[Lemma 2]{bib:gaorichter}.  We use it frequently, often without
explicit mention.
 Lemma \ref{thm:chaindelete4}\ref{cdb} generalizes \cite[Lemma
3]{bib:gaorichter}.


 \begin{lemma}
 \label{thm:circuitinduction}
 Suppose $G$ is a $(3,S)$-connected plane graph, and $C$ is a cycle in
$G$ with no vertex of $S$ strictly inside $C$.
 If the subgraph $H$ of $G$ consisting of $C$ and everything inside $C$
is finite, then $(H, C)$ is a circuit graph.
 \end{lemma}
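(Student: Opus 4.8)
The plan is to verify directly the three requirements in the definition of a circuit graph for the pair $(H,C)$: that $H$ is finite, that $C$ bounds a face of $H$ in some plane embedding, and that $H$ is $(3,C)$-connected. The first holds by hypothesis. For the second, observe that since $H$ consists of $C$ together with everything drawn inside $C$, in the embedding inherited from $G$ the region exterior to $C$ is a single face of $H$; thus $C$ bounds the outer face of $H$. The only substantive content is therefore the connectivity condition.

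To establish $(3,C)$-connectivity I would first apply Lemma~\ref{thm:ksconnected}\ref{ksd} to the $(3,S)$-connected graph $G$ and its subgraph $H$. This yields that $H$ is $(3,S_H)$-connected, where $S_H = A_G(H) \cup (S \cap V(H))$. The goal is then to show the inclusion $S_H \subseteq V(C)$, after which Lemma~\ref{thm:ksconnected}\ref{ksa} (applied with $G'=H$, $k'=3$, and $S'=V(C)$, using $S_H \subseteq V(C) \subseteq V(H)$) immediately upgrades this to $(3,V(C))$-connectivity, i.e.\ $(3,C)$-connectivity, which is exactly what we want.

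It remains to check $S_H \subseteq V(C)$, which splits into two pieces. For the term $S \cap V(H)$: any vertex of $S$ lying in $H$ is either on $C$ or strictly inside $C$, and the latter possibility is excluded by hypothesis, so $S \cap V(H) \subseteq V(C)$. For the term $A_G(H)$, consisting of the vertices of $H$ incident with an edge of $E(G)-E(H)$, I would argue topologically: a vertex $v$ strictly inside $C$ can be joined only to vertices inside or on $C$ without crossing $C$, so every edge of $G$ at $v$ already lies in $H$; hence no interior vertex is an attachment, forcing $A_G(H) \subseteq V(C)$.

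The proof is largely an assembly of the cited parts of Lemma~\ref{thm:ksconnected}, so the main point requiring care is this last planarity argument: it is the only place where the embedding, rather than purely combinatorial connectivity, is used, and one must confirm that an edge incident with an interior vertex cannot escape the closed disk bounded by $C$. (One should also note the harmless edge case in which $H=G$, where $S \cap V(H)=S$ is nonempty, so that $S_H$ is a legitimate nonempty constraint set.) Once both $A_G(H) \subseteq V(C)$ and $S \cap V(H) \subseteq V(C)$ are in hand, parts~\ref{ksd} and~\ref{ksa} combine to finish the argument.
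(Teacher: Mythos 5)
Your proposal is correct and follows exactly the route the paper indicates: the paper gives no written proof but states that the lemma ``follows from Lemma \ref{thm:ksconnected}\ref{ksa} and \ref{ksd},'' which is precisely your assembly, with the planarity observation that $A_G(H) \cup (S \cap V(H)) \subseteq V(C)$ supplying the needed inclusion. Your verification that $C$ bounds the outer face of $H$ in the inherited embedding and your handling of the $H = G$ edge case are appropriate details the paper leaves implicit.
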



 \begin{lemma}
 \label{thm:chaindelete4}
 Suppose $P$ is a path in a finite connected plane graph $G$ and $P
\subseteq X_G$.

 \begin{enumerate}[(a)]
 \item\label{cda} If $G$ is $(3,P)$-connected then $G$ is a plane chain of
circuit blocks along $P$.

 \item\label{cdb} Suppose that $c \in V(X_G)-V(P)$.  If $G$ is $(3, P
\cup \{c\})$-connected then $G-c$ is a plane chain of circuit blocks
along $P$.
 \end{enumerate}
 \end{lemma}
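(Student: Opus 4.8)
The plan is to prove both parts from a single template: extract a linear block structure along $P$ from a weak (only $2$-relative) connectivity hypothesis, upgrade each block to a circuit block using Lemma \ref{thm:ksconnected}\ref{ksd} and \ref{ksa}, and use $P\subseteq X_G$ to forbid nested blocks in the embedding. For part (a), note first that $(3,P)$-connectedness implies $(2,P)$-connectedness by Lemma \ref{thm:ksconnected}\ref{ksa}. I would then show the block--cut tree of $G$ is a path by a leaf-counting argument: if $L$ is a leaf block with cutvertex $v$, then $V(L)-\{v\}$ is a single component of $G-v$, so it meets $V(P)$; since $v$ is the only exit from $L-v$, the path $P$ cannot both enter and leave $L-v$, so an endpoint of $P$ lies in $V(L)-\{v\}$. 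As $P$ has only two endpoints, $G$ has at most two leaf blocks, its block--cut tree is a path $(b_0,B_1,b_1,\dots,B_n,b_n)$, and the endpoints $a,b$ of $P$ lie in $V(B_1)-\{b_1\}$ and $V(B_n)-\{b_{n-1}\}$. Because $G$ is then a chain, the connected path $P$ from $a$ to $b$ must run through every cutvertex $b_1,\dots,b_{n-1}$ and so use an edge of each $B_i$, making $G$ a chain of blocks along $P$.

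To finish (a) I would rule out nesting and then identify the blocks. Every cutvertex $b_i$ lies on $P\subseteq X_G$, and $P$ uses an edge of both $B_i$ and $B_{i+1}$ at $b_i$; if some block were drawn inside a bounded face of another, an edge of $P$ incident with the shared cutvertex would be drawn strictly inside that face and thus off $X_G$, a contradiction. Hence no block is nested, $G$ is a plane chain of blocks, and for each $i$ the outer walk meets $B_i$ only along $X_{B_i}$, so $V(X_G)\cap V(B_i)\subseteq V(X_{B_i})$. Applying Lemma \ref{thm:ksconnected}\ref{ksd} with $H=B_i$ shows $B_i$ is $(3,S_i)$-connected for $S_i=A_G(B_i)\cup(V(P)\cap V(B_i))$; in a chain the attachments of $B_i$-bridges lie among $\{b_{i-1},b_i\}$, so $S_i\subseteq V(X_{B_i})$, and Lemma \ref{thm:ksconnected}\ref{ksa} upgrades this to $(3,X_{B_i})$-connectedness. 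Thus each nontrivial $B_i$ is a circuit block (trivial blocks being circuit blocks by definition), proving (a).

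The key point for part (b)---and the main subtlety---is that one should \emph{not} try to prove $G-c$ is $(3,P)$-connected: deleting the boundary vertex $c$ can genuinely lower the relative connectivity to $2$, yet the conclusion only asks for circuit blocks, whose interiors may well have $2$-cuts isolating arcs of their outer cycle. (A single block such as a long facial cycle with $P$ a short subarc already exhibits this.) So I would run the same template under the weaker hypothesis. For $|T|\le 1$ we have $(G-c)-T=G-(T\cup\{c\})$ with $|T\cup\{c\}|\le 2$, so $(3,P\cup\{c\})$-connectedness of $G$ forces every component to meet $P\cup\{c\}$, hence (as $c$ is deleted) to meet $P$. Thus $G-c$ is $(2,P)$-connected; in particular it is connected, and since $c\notin V(P)$ and $P\subseteq X_G$ one checks $P\subseteq X_{G-c}$. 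The leaf-counting and no-nesting arguments of part (a) then apply verbatim to show $G-c$ is a plane chain of blocks along $P$.

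The one new ingredient is upgrading the blocks of $G-c$, and this is where I expect the delicate bookkeeping to lie. I would apply Lemma \ref{thm:ksconnected}\ref{ksd} to a block $B$ of $G-c$ viewed as a subgraph of the \emph{whole} graph $G$, obtaining that $B$ is $(3,S_B)$-connected with $S_B=A_G(B)\cup(V(P)\cap V(B))$ (the term $c$ drops out since $c\notin V(B)$). Now $A_G(B)$ may contain, besides the cutvertices of $G-c$ adjacent to $B$, some neighbors of $c$; the crucial observation is that because $c\in V(X_G)$, deleting $c$ merges the faces around $c$ into the outer face of $G-c$, so every neighbor of $c$ lies on $X_{G-c}$ and hence any such neighbor in $B$ lies on $X_B$. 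Consequently $S_B\subseteq V(X_B)$, and Lemma \ref{thm:ksconnected}\ref{ksa} again gives $(3,X_B)$-connectedness, so $B$ is a circuit block. The hard part is thus exactly this verification that all $B$-bridge attachments in $G$---in particular the neighbors of the deleted $c$---land on $X_B$, which is precisely where the hypotheses $c\in V(X_G)$ and $P\subseteq X_G$ are used.
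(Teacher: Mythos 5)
Your proposal is correct and takes essentially the same route as the paper's (much terser) proof: both reduce to the chain of blocks along $P$ in $G$ (or $G-c$), use the relative-connectivity hypothesis to show this chain exhausts the graph, and certify each nontrivial block as a circuit graph by inheriting $(3,\cdot)$-connectedness via Lemma \ref{thm:ksconnected}\ref{ksd} and \ref{ksa} --- a step the paper packages as Lemma \ref{thm:circuitinduction}. Your leaf-block counting, the no-nesting check against $P \subseteq X_G$, and the observation that the neighbors of the deleted vertex $c$ land on $X_{G-c}$ are correct expansions of details the paper compresses into ``the results follow.''
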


 \begin{proof}
 Let $K$ be the chain of blocks in $G$ (for (a)) or $G-c$ (for (b))
along $P$.  The connectivity requirement means that in (a) there are no
$K$-bridges in $G$, and in (b) the only $(K \cup \{c\})$-bridges in $G$
are edges incident with $c$. By Lemma \ref{thm:circuitinduction} all
nontrivial blocks of $K$ are circuit graphs.  The results follow.
 \end{proof}

%
%



\section{Standard Pieces and Systems of Distinct Representatives}\label{section:stdpieces}

 To prove that every $4$-connected finite planar graph is
hamiltonian, Tutte used what are now known as ``Tutte
subgraphs.''  In this section, we describe some Tutte subgraph
results, including what we call ``standard pieces,'' that we will use
frequently.

 Let $X$ be a subgraph (usually given in advance) of a graph $G$,
and let $T$ be another subgraph (often a path or a cycle). 
Then $T$ is an \emph{$X$-Tutte subgraph} (or \emph{$X$-Tutte path} or
\emph{$X$-Tutte cycle}, if appropriate) of $G$ if: 

\begin{enumerate}[(i)]

\item every bridge of $T$ in $G$ has at most three attachments, and

\item every bridge of $T$ in $G$ that contains an edge of $X$ has at
most two attachments.

\end{enumerate}
 Sometimes no $X$ is given and only (i) holds; then
we simply say that $T$ is a
\emph{Tutte subgraph.}

 Our overall strategy for constructing a $1$-way infinite spanning
$2$-walk is to build a $1$-way infinite Tutte path $P$, and then detour
into the $P$-bridges to pick up all remaining vertices.  To build $P$ we
use a similar strategy to Dean, Thomas and Yu \cite{bib:deanthomasyu}. 
We build Tutte paths in finite parts of the graph, and then use the
argument of K\H{o}nig's Lemma to find finite paths converging to the
infinite path $P$.  To avoid using a vertex
more than twice when we detour into the $P$-bridges, we use an idea from
Gao, Richter and Yu \cite{bib:gaorichteryu}.
 We designate an entry point for each nontrivial bridge so that a vertex
is used as the entry point of at most one bridge.

 The entry points thus form a \emph{system of distinct representatives},
or \emph{SDR}, for the nontrivial $P$-bridges.
 Formally an SDR is an injective mapping from a set of subgraphs of a
graph $G$ to a set of vertices of $G$ so that each representative vertex
belongs to its subgraph.  We frequently refer to an SDR simply by its
set of representatives.  We never need to enter trivial bridges, so for
a subgraph $P$, an \emph{SDR of the $P$-bridges} means an SDR of the
nontrivial $P$-bridges


 Combining the ideas from \cite{bib:deanthomasyu} and
\cite{bib:gaorichteryu} is not straightforward; making these work
together is one of the main new contributions of this paper.
 First, finding the finite Tutte paths so that we also have an SDR of
their bridges can be complicated --- the most technical parts of the
proofs in Theorems \ref{thm:extendradialnet} and
\ref{thm:extendladdernet} are when we need to join Tutte subgraphs
together but also maintain an SDR of the bridges of their union.
 The general idea of $(3,S)$-connectedness helps here, allowing us to
use arguments that would be awkward to formulate just in terms of
circuit graphs.
 Second, when we use a K\H{o}nig's Lemma argument to get finite Tutte
paths converging to an infinite Tutte path $P$, in Theorem
\ref{thm:onewaypath}, we also need the SDRs for the finite paths to
converge to an SDR for $P$.  This requires a careful technical argument.
 Moreover, our methods allow us to obtain the stronger results in
Section \ref{section:prisms}.

 Throughout this paper, we use a general framework for arguments
involving Tutte subgraphs that we have developed in
\cite{bib:biebighauserellinghamtutte}; an early version appeared in
\cite{bib:dissertation}.
 Tutte subgraph arguments are often very technical and hard to follow;
our framework attempts to clarify them by emphasizing certain
fundamental ideas.
 Two key concepts are that Tutte subgraphs are constructed by piecing
together smaller Tutte subgraphs, and that many of these smaller Tutte
subgraphs are obtained using arguments that occur repeatedly.

  First we state a simple consequence of the definitions of a bridge,
Tutte subgraph, and SDR.  For a similar result (but without SDRs), see
\cite[(2.3)]{bib:thomasyu}.

 \begin{lemma}[Jigsaw Principle]\label{thm:jigsaw}
 Suppose $G$ is the edge-disjoint union of $G_1$, $G_2$, $\dots$, $G_k$. 
Suppose each $G_i$ has a subgraph $X_i$ and an $X_i$-Tutte subgraph
$T_i$ with an SDR $S_i$ of the $T_i$-bridges in $G_i$. 
 Suppose that $V(T_i) \cap V(T_j) = V(G_i) \cap V(G_j)$ and $S_i \cap
S_j = \emptyset$ for $i \ne j$.
 If $T = \bigcup_{i=1}^k T_i$,
 $X = \bigcup_{i=1}^k X_i$, and
 $S = \bigcup_{i=1}^k S_i$ then $T$ is an $X$-Tutte subgraph of $G$
with SDR $S$ of the $T$-bridges.
 Moreover, each $T$-bridge in $G$ is a $T_i$-bridge in $G_i$ for some $i$.
 \end{lemma}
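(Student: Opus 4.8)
The plan is to reduce everything to a single localization principle: each vertex and each $T$-bridge of $G$ should ``live'' inside exactly one $G_i$, after which the Tutte conditions and the SDR property are inherited piece by piece. So the first thing I would record are two consequences of the hypotheses. From $V(T_i) \cap V(T_j) = V(G_i) \cap V(G_j)$ I would prove that $V(G_i) \cap V(T) = V(T_i)$ for each $i$: if $v \in V(G_i) \cap V(T)$ then $v \in V(T_l)$ for some $l$, and if $l \ne i$ then $v \in V(G_i) \cap V(G_l) = V(T_i) \cap V(T_l) \subseteq V(T_i)$, while the reverse inclusion is immediate. A companion fact is that every vertex $v \in V(G) - V(T)$ lies in a unique $G_i$, since $v \in V(G_i) \cap V(G_j)$ with $i \ne j$ would force $v \in V(T_i) \cap V(T_j) \subseteq V(T)$.

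Next I would classify the $T$-bridges. A trivial $T$-bridge is an edge $e \notin E(T)$ with both ends in $V(T)$; since the $G_i$ are edge-disjoint, $e$ lies in a unique $G_i$, and by the first fact its ends lie in $V(G_i) \cap V(T) = V(T_i)$, so $e$ is a trivial $T_i$-bridge. For a nontrivial bridge $J$ with interior component $C$ of $G - V(T)$, the second fact places every vertex of $C$ in one fixed $G_{l_0}$; then every edge of $J$, interior edges and attachment edges alike, has an endpoint in $C$ and hence lies in $G_{l_0}$. The delicate step, and the one I expect to be the crux, is to check that $C$ is not merely contained in but equals a component of $G_{l_0} - V(T_{l_0})$: no vertex of $V(G_{l_0}) - V(T_{l_0})$ outside $C$ can be adjacent to $C$, because any such edge would be a $G_{l_0}$-edge joining $C$ to a vertex of $G - V(T)$, contradicting that $C$ is a full component of $G - V(T)$. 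Combined with $V(G_{l_0}) \cap V(T) = V(T_{l_0})$, this shows $J$ is exactly a $T_{l_0}$-bridge in $G_{l_0}$ with the same attachment set, which proves the \emph{Moreover} clause. Running the same argument in reverse shows each nontrivial $T_i$-bridge is realized as a nontrivial $T$-bridge, so the correspondence is a bijection between nontrivial $T$-bridges in $G$ and the disjoint union of the nontrivial $T_i$-bridges in the $G_i$.

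With this correspondence in hand, the remaining claims are bookkeeping. Since attachments are preserved, conditions (i) and (ii) defining a Tutte subgraph transfer directly from $T_{l_0}$ to $T$; for (ii) I would note that if $J$ contains an edge of $X = \bigcup_i X_i$, edge-disjointness forces that edge into $X_{l_0}$, so $J$ contains an edge of $X_{l_0}$ and therefore has at most two attachments. Finally, defining $S(J) = S_{l_0}(J)$ through the bijection gives an SDR of the $T$-bridges: each representative lies in its bridge, and if two distinct bridges shared a representative they would have to carry the same index $l_0$ (otherwise the representatives would lie in $S_{l_0} \cap S_{l_1} = \emptyset$) and then coincide by injectivity of $S_{l_0}$. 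Surjectivity of the correspondence guarantees that the resulting representative set is exactly $\bigcup_{i=1}^{k} S_i = S$, completing the proof.
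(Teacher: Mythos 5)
Your proof is correct, and it is exactly the direct verification the paper has in mind: the paper states the Jigsaw Principle without proof, calling it ``a simple consequence of the definitions of a bridge, Tutte subgraph, and SDR,'' and your argument---localizing each $T$-bridge to a unique $G_{l_0}$ via the hypotheses $V(T_i)\cap V(T_j)=V(G_i)\cap V(G_j)$ and edge-disjointness, checking the bridge/attachment correspondence in both directions, and then transferring the Tutte conditions and assembling the SDR---is that consequence spelled out in full. The only compressed step (that all vertices of a component $C$ of $G-V(T)$ lie in the same $G_{l_0}$, which needs the connectivity of $C$ together with the fact that each edge lies in a single $G_l$) is supplied implicitly by the edge argument you give in the very next clause, so nothing is missing.
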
  

 We can think of a subgraph $G_i$ with its $X_i$-Tutte subgraph $T_i$
and SDR $S_i$ as a piece of a jigsaw puzzle; we can join pieces if they
``fit together'' correctly.  Usually at least one piece is found by
induction.  Other pieces are constructed using very standard arguments
(here, derived from Theorem \ref{thm:gry}) which form our
 \emph{standard piece lemmas}, or just \emph{standard pieces}.  Each
says that a graph with certain properties has a Tutte subgraph of a
certain type.

 We need three standard pieces involving SDRs, which we call SDR
Standard Piece $k$, or SDR SP$k$, for $k=1,2,3$.  As a mnemonic, $k$
denotes the number of components in the Tutte subgraph.
 Thomas and Yu gave related results without SDRs, combined into a single
theorem \cite[(2.4)]{bib:thomasyu}.  To deal with SDRs it helps to keep
the three situations separate; then the reader also knows exactly which
is being applied.
 We postpone the proofs until the end of this section.
 The figures show an $X$-Tutte subgraph $T$ having SDR $S$ with $X$
as dashed edges (green, if color is shown), $T$ as solid edges
and circled isolated vertices (red), and vertices known \emph{not} to be
in $S$ as solid vertices (blue).
 Solid vertices are used to make SDRs pairwise disjoint when
applying the Jigsaw Principle.


\begin{itemize}


\item {\bf SDR Standard Piece 1 (SDR SP1)}

{\bf Given:} A plane chain of circuit blocks $K = (a=b_0, B_1, b_1, B_2,
\dots, b_{n-1}, B_n, b_n=b)$ with $n \geq 0$, and $u \in V(X_K)$.

{\bf Then there exist:} An $X_K$-Tutte $ab$-path $P$ through $u$ in $K$
and an SDR $S$ of the $P$-bridges with $a \notin S$. 

\begin{figure}[h]
\centering
\def\svgwidth{5.5in}
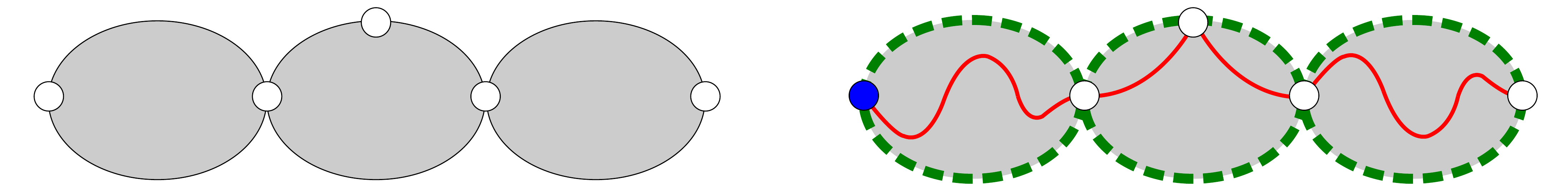
\caption{SDR Standard Piece 1}\label{fig:sdrsp1}
\end{figure}

\item {\bf SDR Standard Piece 2 (SDR SP2)}

{\bf Given:} A connected finite plane graph $K$ and $a, b, c \in V(X_K)$
such that (i) $X_K[a,b]$ is a path avoiding $c$, and (ii) $K - c$ is a plane
chain of circuit blocks $(a=b_0, B_1, b_1, B_2, \dots, \allowbreak
b_{n-1}, B_n, b_n = b)$ with $n \ge 0$.

{\bf Then there exist:} An $ab$-path $P$ avoiding $c$ such that $P \pl
\{c\}$ is an $X_K[a, b]$-Tutte subgraph of $K$, and an SDR $S$ of the
$(P \pl \{c\})$-bridges with $a, c \notin S$.

\begin{figure}[h]
\centering
\def\svgwidth{3.5in}
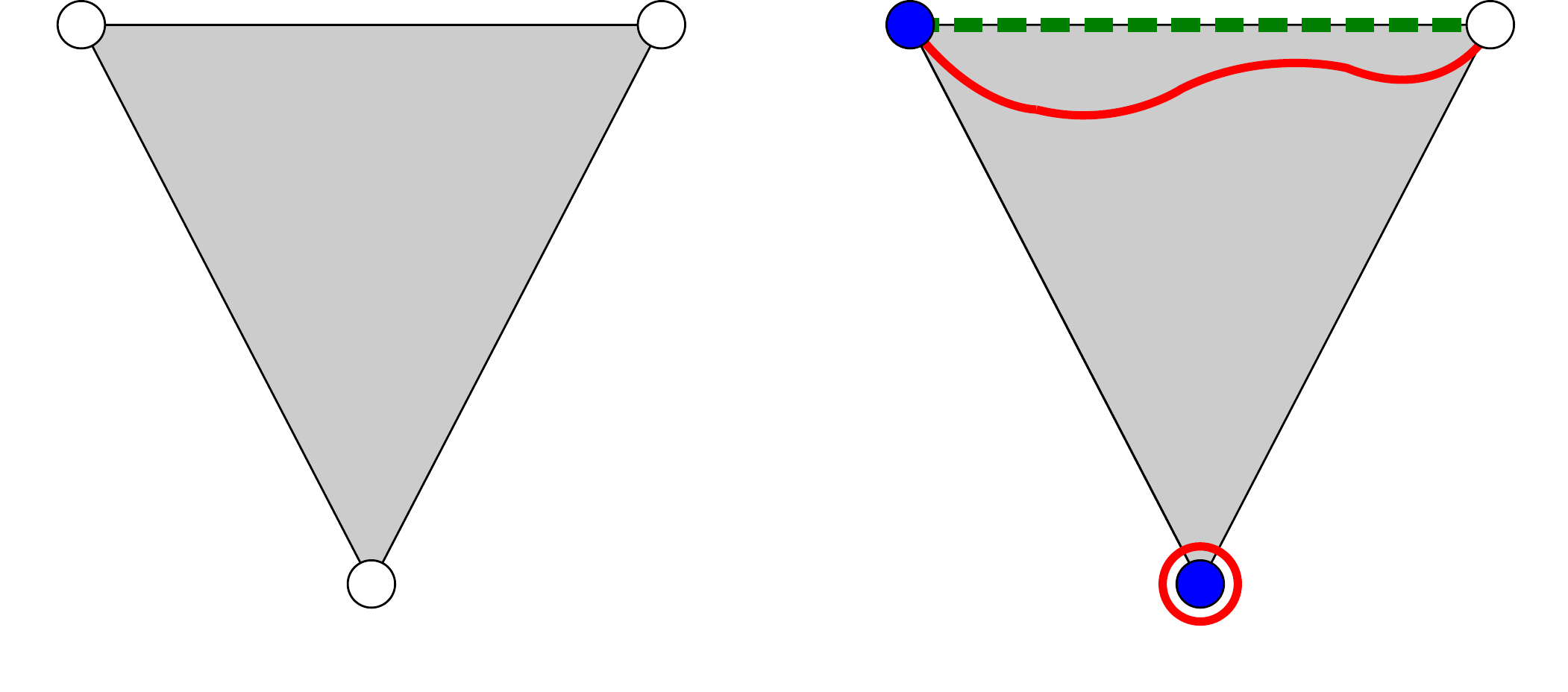
\caption{SDR Standard Piece 2}
\end{figure}

\item {\bf SDR Standard Piece 3 (SDR SP3)}

{\bf Given:} A connected finite plane graph $K$ and $a, b, c, d \in
V(X_K)$ such that (i) $c \ne d$, (ii) $X_K[a, b]$
is a path avoiding $c$ and $d$, and (iii) $K$ is $(3, X_K[a, b] \cup \{c,
d\})$-connected.  

{\bf Then there exist:}  An $ab$-path $P$ avoiding $c$ and $d$ such
that $P \pl \{c,d\}$ is an $X_K[a, b]$-Tutte subgraph of $K$, and for
each $x \in \{c,d\}$ an SDR $S$ of the $(P \pl \{c, d\})$-bridges with
$a, x \notin S$.

\begin{figure}[h]
\centering
\def\svgwidth{4.5in}
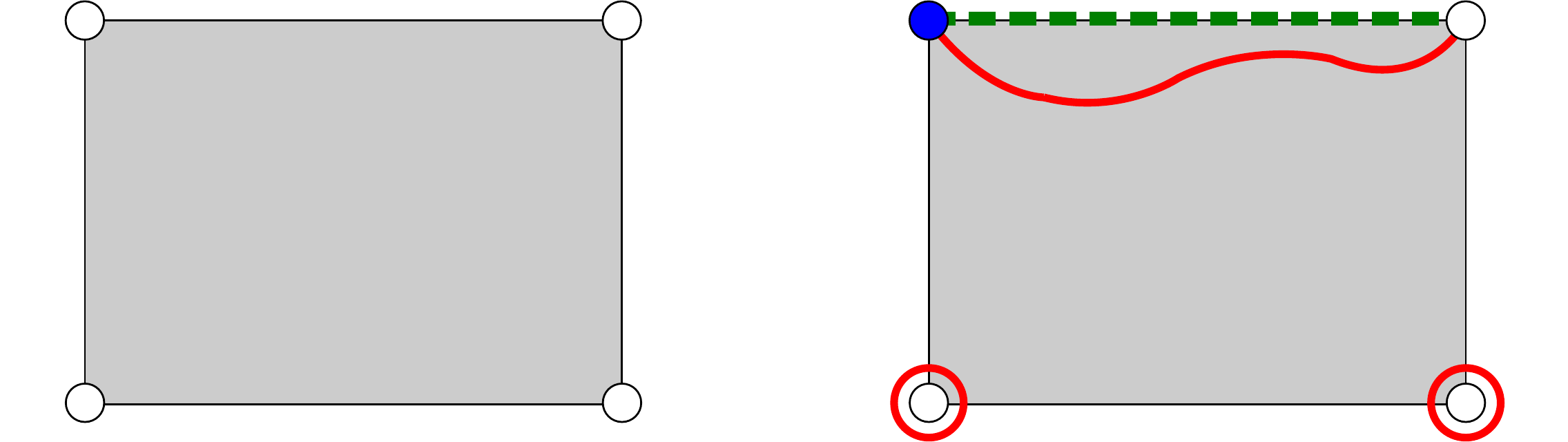
\caption{SDR Standard Piece 3 (one of $c$ or $d$ can be solid)}
\end{figure}

\end{itemize}

 To prove the SDR SPs we use Theorem \ref{thm:gry} below.
 It is \cite[Theorem 4]{bib:gaorichteryu,bib:gaorichteryu2} modified to
include the case
where $G$ is just an edge $xy$ (then we take $P=xy$ and $S =
\emptyset$).
 Note that a ``Tutte path'' in \cite{bib:gaorichteryu,
bib:gaorichteryu2} is what we call an ``$X_G$-Tutte path.''
 Corollary \ref{thm:gryedge} is used later.

 \begin{theorem}
 \label{thm:gry} 
 Let $G$ be a circuit block, $x, u \in V(X_G)$, $y \in
V(G)$ with $x \ne y$, and $v \in \{x, u\}$.  Then there is an
$X_G$-Tutte $xy$-path $P$ through $u$ and an SDR $S$ of the
$P$-bridges with $v \notin S$.
 \end{theorem}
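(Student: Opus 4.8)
The plan is to split on whether the circuit block $G$ is a single edge or a genuine circuit graph, since the statement is exactly \cite[Theorem 4]{bib:gaorichteryu,bib:gaorichteryu2} augmented with a trivial base case. If $G$ is an edge, then because $x \ne y$ and $V(X_G) = V(G)$ has only two elements we must have $\{x,y\} = V(G)$, with $u \in \{x,y\}$ and $v \in \{x,u\} \subseteq \{x,y\}$. Taking $P = xy$ gives a spanning $X_G$-Tutte $xy$-path through $u$ that has no nontrivial bridges, so $S = \emptyset$ is a vacuous SDR and $v \notin S$ holds automatically. Hence this case is immediate, and everything below concerns the case where $(G, X_G)$ is a circuit graph with outer cycle $C = X_G$.

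In the circuit-graph case the conclusion is precisely that of the cited Gao--Richter--Yu theorem: with $x, u \in V(C)$, $y \in V(G)$, $x \ne y$, and a designated vertex $v \in \{x,u\}$ to be kept out of the SDR, they produce an $X_G$-Tutte $xy$-path through $u$ together with an SDR $S$ of its bridges with $v \notin S$. So the quickest route is to verify that our hypotheses match theirs and invoke their result directly. If instead one wanted a self-contained argument, I would prove this case by induction on $|V(G)|$: the inductive step cuts $G$ along a chord of $C$ or along a $2$-cut into strictly smaller pieces, each of which (with its own outer cycle) is again a circuit graph by Lemma \ref{thm:circuitinduction}, so the inductive hypothesis applies after distributing $x$, $y$, $u$, and the excluded vertex $v$ among the pieces. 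One then reassembles the piecewise Tutte paths and SDRs into a single $X_G$-Tutte $xy$-path with an SDR using the Jigsaw Principle (Lemma \ref{thm:jigsaw}), whose disjointness hypotheses are exactly what dictate the choice of excluded vertices in the subproblems. A related statement without SDRs is \cite[(2.4)]{bib:thomasyu}.

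The main obstacle is not the existence of the Tutte path --- that is the classical Tutte/Thomas--Yu machinery --- but the SDR bookkeeping. To apply the Jigsaw Principle one must guarantee that the pieces' SDRs are pairwise disjoint and that the globally forbidden vertex $v$ is never chosen as a representative; the freedom to forbid \emph{either} endpoint (that is, any $v \in \{x,u\}$) in each subproblem is precisely the slack that lets the induction close up when a shared cut vertex would otherwise be claimed as a representative by two pieces at once. Coordinating this with the requirement that $P$ pass through $u$ --- so that $u$ lands in whichever piece carries it and the excluded vertex of that piece is adjusted accordingly --- is the technical heart of the induction, and is why the three separate SDR Standard Piece lemmas (rather than a single combined statement) are convenient downstream.
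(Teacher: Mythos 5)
Your proposal is correct and takes essentially the same approach as the paper: the paper offers no proof of Theorem \ref{thm:gry} at all, stating only that it is \cite[Theorem 4]{bib:gaorichteryu,bib:gaorichteryu2} modified to include the case where $G$ is just an edge, in which one takes $P = xy$ and $S = \emptyset$ --- exactly your primary route of verifying the edge case directly and invoking Gao--Richter--Yu for the circuit-graph case. Your sketched self-contained induction is superfluous to what the paper does, but your main argument coincides with the paper's.
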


 \begin{corollary}\label{thm:gryedge}
 Let $(G, X_G)$ be a circuit graph, $x \in V(X_G)$, $y \in
V(G)-\{x\}$, and $e \in E(X_G)$.  Then there is an $X_G$-Tutte $xy$-path
$P$ through $e$ and an SDR $S$ of the $P$-bridges with
$x \notin S$.
 \end{corollary}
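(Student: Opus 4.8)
The plan is to reduce the Corollary to Theorem~\ref{thm:gry} by a subdivision trick that converts ``passing through an edge'' into ``passing through a vertex.'' Write $e = pq$, and form $G'$ from $G$ by subdividing $e$: delete $e$ and add a new vertex $w$ adjacent exactly to $p$ and $q$. Since $p, q \in V(X_G)$, the vertex $w$ lies on the outer boundary, and the outer cycle $X_{G'}$ is just $X_G$ with $e$ replaced by the path $p\,w\,q$; in particular $V(X_{G'}) = V(X_G) \cup \{w\}$ and $X_{G'}$ still bounds the outer face of the finite plane graph $G'$.

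First I would check that $(G', X_{G'})$ is again a circuit graph, i.e.\ that $G'$ is $(3, X_{G'})$-connected. Because $e$ has both ends in $V(X_G)$, Lemma~\ref{thm:ksconnected}\ref{ksb} shows $G - e$ is $(3, X_G)$-connected. Then applying Lemma~\ref{thm:ksconnected}\ref{ksf} with $R = \{w\}$ and $S = V(X_G)$ --- noting that $w$ is adjacent only to $p, q \in S$ --- gives that $G' = (G-e) + w$ is $(3, V(X_G) \cup \{w\})$-connected, that is, $(3, X_{G'})$-connected. Hence $(G', X_{G'})$ is a circuit graph, so $G'$ is a circuit block.

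Next I would apply Theorem~\ref{thm:gry} to $G'$ with the vertex $u := w \in V(X_{G'})$, the same endpoints $x$ and $y$ (both in $V(G')$, with $x \ne y$), and $v := x$. This yields an $X_{G'}$-Tutte $xy$-path $P'$ through $w$ together with an SDR $S$ of the $P'$-bridges with $x \notin S$. Since $w \notin \{x, y\}$, it is an interior vertex of $P'$ and has degree $2$ in $G'$ with neighbors only $p$ and $q$; therefore $P'$ must use both edges $pw$ and $wq$, i.e.\ it contains the subpath $p\,w\,q$. ``Undoing'' the subdivision --- replacing that subpath by the edge $e$ --- produces an $xy$-path $P$ in $G$ that contains $e$.

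The main thing to verify is that $P$ inherits the Tutte property and the SDR from $P'$. This should be routine: the subdivision vertex $w$ lies on $P'$ and its only two edges are used by $P'$, so $w$ belongs to no $P'$-bridge, and the $P'$-bridges in $G'$ are literally the $P$-bridges in $G$ (the edge $e$, now on $P$, is never a bridge). Likewise, an edge of $X_{G'}$ lies in a bridge if and only if the corresponding edge of $X_G$ does, so condition (ii) in the definition of an $X$-Tutte subgraph transfers verbatim, as does condition (i). Thus $P$ is an $X_G$-Tutte $xy$-path through $e$ with SDR $S$ of its bridges and $x \notin S$, as required. I expect no genuine obstacle here; the only points needing care are the connectivity bookkeeping for $G'$ (handled entirely by Lemma~\ref{thm:ksconnected}) and confirming that the bridge and attachment structure is unchanged by the (un)subdivision.
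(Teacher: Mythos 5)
Your proposal is correct and matches the paper's proof exactly: the paper also subdivides $e$ with a new vertex, verifies that the result is a circuit graph via Lemma~\ref{thm:ksconnected}\ref{ksb} and \ref{ksf}, and applies Theorem~\ref{thm:gry} with $v = x$. The only difference is that you spell out the routine un-subdivision and bridge-transfer details that the paper leaves implicit.
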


 \begin{proof}
 Form $G'$ by subdividing the edge $e$ with a new
vertex $u$.  Then $(G', X_{G'})$ is a circuit graph by Lemma
\ref{thm:ksconnected}\ref{ksb} and \ref{ksf}.
Apply Theorem \ref{thm:gry} to
$(G', X_{G'})$, choosing $v = x$.
 \end{proof}

 \begin{proof}[Proof of SDR Standard Pieces 1, 2, and 3]
 For SDR SP1,
 if $a = b$, set $P = a =
b$ and $S = \emptyset$.  Otherwise, for each $B_i$, by Theorem
\ref{thm:gry}, we find an $X_{B_i}$-Tutte $b_{i-1}b_i$-path $P_i$ and an
SDR $S_i$ of the $P_i$-bridges such that $b_{i-1} \notin S_i$; if $u \in
V(B_i) - \{b_{i-1},b_i\}$, we choose $P_i$ to go through $u$.
 By the Jigsaw Principle, $P = \bigcup_{i=1}^n P_i$ and $S =
\bigcup_{i=1}^n S_i$ are as desired.

 \medskip
 For SDR SP2,
 apply SDR SP1 to $H=K-c$ to obtain an $ab$-path $X_{H}$-Tutte
path $P$ in $H$ and SDR $S$ of the $P$-bridges in $H$ with $a \notin
S$.
 Every nontrivial $P \pl \{c\}$-bridge $J$ in $K$ is a
$P$-bridge in $H$ unless it contains edges incident with $c$; in
that case $J$ must consist of a $P$-bridge $J'$ that uses an edge of
$X_{H}[b,a]$, and edges incident with $c$.  Then $J$ has three
attachments (two from $J'$, and $c$) and we may reassign the
representative of $J'$ to $J$.  Hence $P\pl\{c\}$ and $S$ (with some
reassignment) are as required.

 \medskip
 Finally, for SDR SP3,
 let $H = (b_0 = a, B_1, b_1, B_2, \dots, b_{n-1}, B_n, b_n = b)$ be the
plane chain of blocks along $X_K[a, b]$ in $K - \{c, d\}$.
 Since $K$ is $(3, X_K[a, b] \cup \{c, d\})$-connected, by applying
Lemma \ref{thm:circuitinduction} to the nontrivial blocks we see that
each $B_i$ is a circuit block.

 Every nontrivial $(H \cup \{c, d\})$-bridge has at most one attachment
in $H$, because $H$ is a chain of blocks, but at least three attachments
because $K$ is $(3, X_K[a, b] \cup \{c, d\})$-connected, so it must have
$c$, $d$ and exactly one vertex of $H$ as attachments.
 By planarity there can be at most one such bridge; if it exists, call
it $J$ and let $u$ be its attachment in $H$.

 By SDR SP1 there is an $X_H$-Tutte $ab$-path $P$ in $H$, through $u$ if
it exists, with an SDR $S'$ of the $P$-bridges such that $a \notin S'$.
 Consider the nontrivial $(P \cup \{c,d\})$-bridges in $K$.
 The only such bridge that can contain both $c$ and $d$ is $J$.  If $J$
exists, we choose $y$ as its representative, where $\{x,y\}=\{c,d\}$,
and set $S = S' \cup \{y\}$; otherwise set $S = S'$.
 The argument from the proof of SDR SP2 for nontrivial $P \cup \{c\}$
bridges applies here to nontrivial bridges with exactly one of $c$ or
$d$ as an attachment.
 Hence $P$ and $S$ are as desired.
 \end{proof}





 \begin{remark}\label{thm:2att3att}
 One idea from the proofs of SDR SP2 and SP3 is used often.
 If we delete a vertex $x$ from a graph, find a Tutte subgraph and an
SDR in what remains, and then add $x$ back, $x$ may become a new
attachment for some bridges.  We ensure that each such bridge previously
had only two attachments, so with $x$ there are still only three.
 Each such bridge already has a representative, so we do not need to use
$x$ as its representative.
 \end{remark}


\section{Structural Results}\label{section:structural}

In this section, we give some results about the structure of $3$-connected $2$-indivisible infinite planar graphs.  In these graphs, we find either an infinite plane chain of blocks or a structure called a \emph{net}.  Then, in Section \ref{section:pathsandwalks}, we use these structural results to build our $1$-way infinite $2$-walks.

If $G$ is a $2$-indivisible infinite plane graph and $C$ is a cycle in $G$, then $C$ divides the plane into two closed sets, exactly one of which contains finitely many vertices.  Let $I(C)$ (or $I_G(C)$) denote the subgraph of $G$ consisting of all vertices and edges of $G$ inside that closed set containing finitely many vertices.  Note that $C \subseteq I(C)$.  Dean, Thomas, and Yu \cite{bib:deanthomasyu} defined a \emph{net} in $G$ to be a sequence of cycles $N = (C_1, C_2, C_3, \dots)$ such that

\begin{enumerate}

\item $I(C_i)$ is a subgraph of $I(C_{i+1})$ for all $i = 1, 2, 3, \dots$,

\item $\bigcup_{i=1}^\infty I(C_i) = G$, and either

\item $C_1, C_2, C_3, \dots$ are pairwise disjoint, or

\item[3${}'$.] for every $i= 1, 2, 3, \dots$, the graph $C_i \cap C_{i+1}$ is a non-empty path, it is a subgraph of $C_{i+1} \cap C_{i+2}$, and no endpoint of $C_i \cap C_{i+1}$ is an endpoint of $C_{i+1} \cap C_{i+2}$.

\end{enumerate}
 If 3 holds we say that $N$ is a \emph{radial net}, and if 3${}'$ holds
we say that $N$ is a \emph{ladder net}.  A graph with a net is locally
finite, because for every vertex $v$ there is some $i$ such that $v$ and
all its neighbors belong to $I(C_i)$.

In \cite{bib:yu3}, Yu said that an infinite plane graph $G$ is \emph{nicely embedded} or is a \emph{nice embedding} if, for any cycle $C$ in $G$ for which $I(C)$, the finite side of $C$, is defined, $I(C)$ is contained in the closed disk bounded by $C$.  In a nice embedding, the intuitive idea of the ``inside'' of a cycle $C$ coincides with $I(C)$, which is why the notation $I(C)$ is used.  The following lemma is \cite[(2.1)]{bib:yu3}.

\begin{lemma}\label{thm:niceembedding} Let $G$ be an infinite plane graph, and suppose $G$ has a sequence of cycles $(C_1, C_2, C_3, \dots)$ such that $I(C_i)$ is a subgraph of $I(C_{i+1})$ for all $i = 1, 2, 3, \dots$, and $\bigcup_{i=1}^\infty I(C_i) = G$.  Then for any facial cycle $C$ of $G$, $G$ has a nice embedding in which $C$ is also a facial cycle.
\end{lemma}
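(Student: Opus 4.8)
The plan is to produce the nice embedding without disturbing the combinatorial structure of $G$: I keep the rotation system fixed and only re-choose which point plays the role of infinity, i.e. I regard the given drawing as an embedding in the sphere $S^2$ and look for the right face in which to place $\infty$. For each $i$ the cycle $C_i$ bounds two closed disks in $S^2$; let $\Delta_i$ be the one with $\Delta_i \cap G = I(C_i)$ (the finite side) and let $W_i = S^2 \setminus \Delta_i$ be the open infinite side, which contains all but finitely many vertices of $G$. The goal is to find a single point $p \in S^2 \setminus G$ lying in $\bigcap_i W_i$ and then re-embed with $p$ sent to infinity.

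First I would show the finite sides are nested, $\Delta_i \subseteq \Delta_{i+1}$. Since $C_i \subseteq I(C_i) \subseteq I(C_{i+1}) \subseteq \Delta_{i+1}$, the cycle $C_i$ lies in $\Delta_{i+1}$, so the connected open set $W_{i+1}$ is disjoint from $C_i$ and hence lies entirely on one side of $C_i$. As $W_{i+1}$ contains infinitely many vertices while the finite side of $C_i$ contains only finitely many, $W_{i+1}$ must lie on the infinite side of $C_i$; that is, $W_{i+1} \subseteq W_i$. This vertex-counting principle — an infinite connected region avoiding a cycle lies on that cycle's infinite side — is the engine of the whole argument and will be reused below.

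The heart of the proof is locating the persistent point $p$. The faces of $G$ lying in $\Delta_i$ are exactly the internal faces of the finite graph $I(C_i)$, so only finitely many faces lie in each $\Delta_i$, and these face-sets increase with $i$. The nested compacta $\overline{W_i}$ have nonempty intersection $Z$; if $Z$ met no face (so that every face were eventually absorbed into some $\Delta_i$), then $Z$ would lie on $G$ and, by the nesting, each of its points would lie on $C_i$ for all large $i$. A pigeonhole argument over the finitely many faces incident to such a point then produces a face contained in all $W_i$ after all. The one situation this does not immediately settle — and the main obstacle — is a vertex of infinite degree at which $G$ accumulates, where infinitely many incident faces crowd together and no single one survives in every $W_i$. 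Ruling this out (equivalently, ensuring the unique ``end'' determined by the exhaustion is a genuine face-point rather than an accumulation point) is exactly the phenomenon a nice embedding is designed to prevent; I would handle it either by the structural hypotheses in force or, more robustly, by instead \emph{constructing} the drawing outward — realizing $C_1, C_2, \dots$ as circles of radius $1,2,\dots$ and fitting each finite extension $I(C_{i+1})$ into the annulus outside the drawing of $I(C_i)$ (using that $C_i$ bounds a face of $I(C_{i+1})$) — so that the end is pushed to infinity and no finite accumulation point is ever created. Let $F^*$ be the resulting persistent face and take $p \in F^*$.

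Finally I would verify the two required properties. For niceness, let $C$ be any cycle with $I(C)$ defined; since $I(C)$ is finite and the $I(C_i)$ exhaust $G$, we have $I(C) \subseteq I(C_i) \subseteq \Delta_i$ for some $i$, so $C \subseteq \Delta_i$ and the connected infinite-vertex region $W_i \ni p$ avoids $C$. By the vertex-counting principle $W_i$ lies on the infinite side of $C$, hence $p$ (now at infinity) is on the infinite side of $C$ and $I(C)$ lies in the bounded disk of $C$, which is precisely the definition of a nice embedding. Preserving faciality of the prescribed cycle $C$ is immediate: moving the point at infinity is realized by a self-homeomorphism of $S^2$, which preserves the combinatorial map and therefore every facial cycle, $C$ included. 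Thus the constructed embedding is nice and keeps $C$ facial, as required.
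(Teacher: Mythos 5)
Your central device --- keep the given embedding and only re-choose which point of $S^2$ plays the role of infinity --- cannot work in general, and the failure is exactly the case you flag but do not resolve. Here is a concrete counterexample to the existence of your point $p$. Let $G=\bigcup_{i\ge 1}T_i$, where $T_i$ is a triangle $z\,a_i\,b_i$ and the triangles pairwise share only the vertex $z$; draw them nested, $T_{i+1}$ strictly inside $T_i$ except at $z$, with apex angles and diameters shrinking to $0$ (so the edges at $z$ do not overlap). This is a legitimate plane graph: vertices may accumulate at $z$, and $z$ has infinite degree, neither of which the lemma excludes. Taking $C_i=T_i$, the side of $C_i$ with finitely many vertices is the \emph{outside}, so $I(C_i)=T_1\cup\cdots\cup T_i$; these are nested and exhaust $G$, so the hypotheses of the lemma hold. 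But $W_i$ is the open inside of $T_i$, and $\bigcap_i W_i=\emptyset$: every point other than $z$ is eventually outside $T_i$, while $z$ lies on each $C_i$ and $\infty$ lies outside each $T_i$. So no point $p$ exists, and indeed \emph{no} self-homeomorphism of $S^2$ makes this embedding nice, since niceness applied to $C_i$ forces the point at infinity into $W_i$ for every $i$. (The lemma itself is true for this graph --- redraw the triangles splayed around $z$ like petals, or concentric and growing --- but that is a genuine re-drawing, not a relocation of $\infty$.) Your proposed repairs do not close this: the pigeonhole step needs only finitely many faces at the bad point, i.e.\ local finiteness, which is not a hypothesis; and there are no other ``structural hypotheses in force'' in the lemma as stated. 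Note also that the paper does not prove this lemma at all --- it cites it as (2.1) of Yu's paper --- so there is no in-paper proof your argument could be matching.

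Your one-sentence fallback --- construct the drawing outward, fitting $I(C_{i+1})\setminus I(C_i)$ into the annulus between concentric circles of radii $i$ and $i+1$ --- is in fact the correct route (and essentially Yu's), but as sketched it rests on a false claim: $C_i$ need \emph{not} bound a face of $I(C_{i+1})$ (in the infinite grid with concentric $C_i$, the new material attaches all around $C_i$). What is true, and what the construction actually needs, is that $C_{i+1}$ bounds a face of the finite plane graph $I(C_{i+1})$ (the open side $W_{i+1}$ contains no point of $I(C_{i+1})$, since $I(C_{i+1})=G\cap\Delta_{i+1}$) and that $I(C_{i+1})\setminus I(C_i)$ lies in the closed region of the sphere between $C_i$ and $C_{i+1}$; this lets you transport the restricted embedding disk-by-annulus onto the concentric picture. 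You would then still owe the two verifications that your write-up supplies only for the broken approach: niceness of the new drawing (which follows because every bounded region of it meets only finitely many vertices, so the finite side of any cycle is automatically its bounded side --- your ``vertex-counting principle'' no longer enters) and faciality of the prescribed cycle $C$ (which must be checked through the piecewise transport; the ``homeomorphism of $S^2$ preserves faces'' argument is unavailable once you re-draw). The nesting claim $\Delta_i\subseteq\Delta_{i+1}$ and the final deduction of niceness from the existence of $p$ are correct as far as they go, but the existence of $p$ is the heart of the matter and it is false.
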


By Lemma \ref{thm:niceembedding}, if a infinite plane graph has a net, then the graph has a nice embedding.  In this paper, we will always assume that such a graph is nicely embedded in the plane.

Let $N = (C_1, C_2, C_3, \dots)$ be a net in a $2$-indivisible plane
graph $G$.  The \emph{boundary} of $N$, denoted by $\partial N$, is the
graph $\bigcup_{i=1}^\infty (C_i \cap C_{i+1})$.  If $N$ is a radial
net, $\partial N = \emptyset$, and if $N$ is a ladder net, then
$\partial N$ is a $2$-way infinite path.  If $N$ is a ladder net, we
will assign an orientation $\overrightarrow{\partial N}$ to $\partial N$
such that $G - V(\partial N)$ is to the right of every edge in
$\overrightarrow{\partial N}$.  For $i = 1, 2, 3, \dots$, let $D_i$ be
the graph obtained from $C_i$ by deleting $C_i \cap C_{i+1}$ except its
endpoints.  If $N$ is a radial net, $D_i = C_i$, and if $N$ is a ladder
net, then $D_i$ is a path with both ends in $\partial N$ and otherwise
disjoint from $\partial N$.  If $N$ is a ladder net, and $C_1 - V(D_1)$
contains at least one vertex, let $D_0 = C_0$ be a subpath of $C_1 -
V(D_1)$.  Otherwise, set $D_0 = C_0 = \emptyset$ (but we will never see
this case in this paper).  We say that $N$ is \emph{tight} if

\begin{enumerate}

\item $I(C_1) = C_1$ if $N$ is a radial net,

\item $C_1 \cap C_2$ is either empty or contains at least one edge, and

\item for every $i = 1, 2, 3, \dots$, every $D_{i+1}$-bridge in $I(C_{i+1}) - V(I(C_i))$ has at most one attachment.

\end{enumerate}

If $N$ is a tight ladder net and every $D_1$-bridge in $I(C_1)- V(D_0)$
has at most one attachment, we say that $N$ is \emph{tight with respect
to $D_0$}.  Note that the definition of a tight ladder net in Dean,
Thomas, and Yu \cite{bib:deanthomasyu} also requires that $I(C_1) =
C_1$, which would imply that it is tight with respect to $D_0$; we will
not need this.

The following three lemmas are from \cite{bib:deanthomasyu}. The first immediately precedes their (1.2), and the others are their (2.1) and (2.2), respectively.

\begin{lemma}\label{thm:3connectedimplies2indivisible}
If $G$ is a $3$-connected planar graph and $X$ is any finite subset of $V(G)$, then $G - X$ has a finite number of components.
\end{lemma}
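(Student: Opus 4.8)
The plan is to bound the number of components of $G - X$ in terms of $|X|$ by combining $3$-connectivity with the linear edge bound for bipartite planar graphs. First I would dispose of the trivial case: if $G - X$ has at most one component there is nothing to prove, so assume it has at least two. Let $C$ be any component of $G - X$ and set $S = N_G(C) \cap X$, the set of vertices of $X$ having a neighbor in $C$. Since the other components of $G - X$ are nonempty and lie entirely outside $C \cup S$, the set $S$ separates $V(C)$ from the rest of $G$, so $S$ is a cutset. As $G$ is $3$-connected it has no cutset of size less than $3$, hence $|N_G(C) \cap X| \ge 3$ for every component $C$.

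Next I would encode the attachments in a finite bipartite minor. Suppose, for contradiction, that $G - X$ has at least $n$ components $C_1, \dots, C_n$; for each $i$ choose three distinct vertices of $X$ adjacent to $C_i$, which is possible by the previous step. Let $H_n$ be the bipartite graph with parts $X$ and $\{w_1, \dots, w_n\}$, where $w_i$ is joined to its three chosen neighbors in $X$. Then $H_n$ is a minor of $G$: in each $C_i$ pick a finite connected subgraph $T_i$ containing the three endpoints in $C_i$ of the chosen attachment edges, and contract each $T_i$ to the single vertex $w_i$. Since each $T_i$ is finite, $H_n$ is in fact a minor of the finite planar subgraph of $G$ induced by $X$, the $T_i$, and the $3n$ chosen edges, so $H_n$ is planar because a finite minor of a planar graph is planar. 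Now $H_n$ is a simple bipartite planar graph with $|X| + n$ vertices and exactly $3n$ edges, so the bound $|E| \le 2|V| - 4$ for simple bipartite planar graphs gives $3n \le 2(|X| + n) - 4$, i.e.\ $n \le 2|X| - 4$. Thus $G - X$ has at most $2|X| - 4$ components, which is finite.

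The step I expect to require the most care is the minor argument, because the components $C_i$ may be infinite: one cannot simply ``contract an infinite region to a point'' and appeal to planarity of the embedding directly. The clean way around this is to contract only a finite connected subtree $T_i$ of each $C_i$ meeting the three selected attachment edges, so that the branch sets are finite and $H_n$ becomes a finite minor of a finite planar subgraph of $G$; this finiteness is exactly what lets me invoke the (finite) excluded-minor characterization of planarity. The only other point needing attention is the edge case in the attachment claim, where a single component together with its neighbors might exhaust $V(G)$; this is ruled out precisely by the assumption that there are at least two components.
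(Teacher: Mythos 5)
The paper does not actually prove this lemma: it is imported verbatim from Dean, Thomas, and Yu \cite{bib:deanthomasyu} (the statement immediately preceding their (1.2)), so there is no in-paper argument to compare against. Your proof is correct, self-contained, and follows the standard route for this fact: once $G-X$ has at least two components, $N_G(C)\cap X$ is a genuine cutset for each component $C$, so $3$-connectivity gives three attachments per component; contracting a finite connected subgraph of each $C_i$ meeting the three chosen attachment edges yields a simple bipartite planar graph on $X\cup\{w_1,\dots,w_n\}$ with $3n$ edges, and the bound $|E|\le 2|V|-4$ forces $n\le 2|X|-4$. You also correctly flag and handle the two places where a careless version would break: the one-component case (where $N_G(C)\cap X$ need not separate anything) and the possible infinitude of the components (circumvented by taking finite branch sets, so planarity of the minor follows from planarity of a finite subgraph of $G$). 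One micro-point you could make explicit: $H_n$ may have isolated vertices of $X$ and be disconnected, but the bound $|E|\le 2|V|-4$ still applies since every component of $H_n$ containing an edge contains some $w_i$ of degree $3$ and hence has at least four vertices (or simply delete isolated vertices, which only helps the inequality). As a bonus, your argument gives the explicit bound $2|X|-4$ on the number of components, which the bare statement of the lemma does not record.
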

  
\begin{lemma}\label{thm:twoinfinite}
Let $G$ be a $2$-indivisible infinite plane graph such that the deletion of any finite set of vertices in $G$ results in a finite number of components.  Then $G$ has at most two vertices of infinite degree.
\end{lemma}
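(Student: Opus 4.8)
The plan is to argue by contradiction, so suppose $G$ has three vertices $u,v,w$ of infinite degree. The crucial first step is to locate a single infinite connected piece of $G$ into which all three send infinitely many edges. Consider $G-\{u,v,w\}$. By hypothesis it has only finitely many components, and by $2$-indivisibility at most one of them, say $K^*$, is infinite; all others are finite. Since there are only finitely many finite components and each contains finitely many vertices, together they contain only finitely many neighbours of $u$. As $u$ has infinite degree it has infinitely many neighbours in $G-\{u,v,w\}$, so all but finitely many of them lie in $K^*$. The same argument applies to $v$ and $w$, and since $K^*$ is the \emph{unique} infinite component it is the same piece in each case; hence each of $u,v,w$ has infinitely many neighbours in the single infinite connected component $K^*$.

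Next I would build a $K_{3,3}$-minor. The key observation is that both hypotheses are inherited under deletion of a finite vertex set: if $F$ is finite then $(G-F)-S=G-(F\cup S)$ for every finite $S$, so $G-F$ is again $2$-indivisible and again has finitely many components after any finite deletion, while $u,v,w$ still have infinite degree in $G-F$. I would therefore iterate the first step three times. First choose neighbours $a_1\in N(u)\cap V(K^*)$, $b_1\in N(v)\cap V(K^*)$, $c_1\in N(w)\cap V(K^*)$ and, using connectedness of $K^*$, a finite tree $T_1\subseteq K^*$ joining $a_1,b_1,c_1$; then $T_1$ is adjacent in $G$ to each of $u,v,w$. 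Now apply the first step to $G-V(T_1)$ to obtain the unique infinite component $K^{**}$ of $G-(V(T_1)\cup\{u,v,w\})$, in which $u,v,w$ again have infinitely many neighbours, and build a finite tree $T_2\subseteq K^{**}$ joining a neighbour of each of $u,v,w$. Repeating once more inside $G-(V(T_1)\cup V(T_2))$ yields $T_3$. By construction $T_1,T_2,T_3$ are pairwise disjoint (each lies outside the previously removed trees) and disjoint from $\{u,v,w\}$, and each $T_i$ is adjacent to all of $u,v,w$.

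Finally, the six pairwise-disjoint connected vertex sets $\{u\},\{v\},\{w\},V(T_1),V(T_2),V(T_3)$, with each $T_i$ adjacent to each of $u,v,w$, are the branch sets of a $K_{3,3}$-minor in $G$; this contradicts the planarity of $G$, which completes the proof. I expect the main obstacle to be exactly the disjointness in the tree construction: one cannot simply select three trees inside $K^*$, because deleting the first finite tree may disconnect $K^*$ and strand the neighbours of $u,v,w$ in different components. The device that overcomes this is the inheritance of both hypotheses under finite deletions, which guarantees that after removing each finite tree a fresh infinite component reappears in which all three vertices still have infinitely many neighbours. This is precisely where the $2$-indivisibility and the finite-components hypotheses are needed, and where they must be used repeatedly rather than just once.
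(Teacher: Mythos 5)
Your proof is correct: locating the unique infinite component $K^*$ of $G-\{u,v,w\}$ (which exists because $G$ is infinite and only finitely many components arise, and is unique by $2$-indivisibility), noting that both hypotheses and the infinite degrees of $u,v,w$ survive deletion of any finite vertex set, iterating to extract three pairwise disjoint finite trees each adjacent to all of $u,v,w$, and contracting to a $K_{3,3}$ minor inside a finite (hence planar) subgraph all go through without difficulty. Note that the paper itself states this lemma without proof, citing it as (2.1) of Dean, Thomas, and Yu, and your $K_{3,3}$-minor argument is essentially the standard one behind that result, so there is no in-paper proof to diverge from.
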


\begin{lemma}\label{thm:locallyfinitenet}
 Let $G$ be a locally finite $2$-connected $2$-indivisible infinite
plane graph.  Then $G$ has a net.
 \end{lemma}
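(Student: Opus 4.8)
The plan is to build the net directly, as a nested exhausting sequence of cycles, and then to split into the radial and ladder cases according to how the embedding behaves near the unique end of $G$. First I would record the consequences of the hypotheses. Since $G$ is locally finite, deleting any finite $S \subseteq V(G)$ leaves at most $\sum_{v \in S} \deg(v)$ components, hence only finitely many; as $G$ is infinite at least one of them is infinite, and $2$-indivisibility forces exactly one. Thus $G$ is one-ended, and for every cycle $C$ exactly one side carries finitely many vertices, so $I(C)$ is well defined. I would also fix an enumeration $v_1, v_2, \dots$ of $V(G)$, which by local finiteness lets me insist that the interiors I construct eventually absorb every vertex.

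Next I would construct finite $2$-connected subgraphs $H_1 \subseteq H_2 \subseteq \cdots$ with $\bigcup_i H_i = G$, each $H_i$ consisting of everything lying inside its own outer boundary, so that the outer walk $X_{H_i}$ is a genuine cycle $C_i$ and $I(C_i) = H_i$. Because $G$ is $2$-connected and locally finite, I can enlarge repeatedly: given $H_i$, take the first unenclosed vertex $v_j$, join it to $H_i$ by two internally disjoint paths (via $2$-connectivity and Menger's Theorem), and then close up under ``everything inside the current outer cycle.'' Keeping each stage $2$-connected guarantees that its outer walk is a single cycle rather than a walk pinched at a cutvertex. Nesting of the $H_i$ yields $I(C_i) \subseteq I(C_{i+1})$, and their union is $G$, so net axioms~1 and~2 hold; by Lemma \ref{thm:niceembedding} I may further assume the embedding is nice, i.e.\ $I(C)$ is the geometric inside of $C$.

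The radial/ladder dichotomy I would then make according to whether $G$ has a face incident with infinitely many vertices. In a $2$-connected plane graph each face boundary is a cycle, or, if the face is infinite, a $2$-way infinite path; one-endedness rules out two infinite faces, since two boundary double rays could be separated by a finite vertex set into two infinite pieces, contradicting $2$-indivisibility. If there is no infinite face, every $C_i$ sits strictly inside the next with vertices to spare, and I would thin the sequence (or re-choose each $C_{i+1}$ just outside $C_i$) to make the cycles pairwise disjoint, giving a radial net and axiom~3. If instead there is a unique infinite face whose boundary is a $2$-way infinite path $Q$, I would grow the $H_i$ along $Q$, arranging that $C_i \cap Q$ is a finite subpath that strictly extends at both ends as $i$ increases. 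Then $C_i \cap C_{i+1}$ is exactly this shared subpath of $Q$, it is contained in $C_{i+1} \cap C_{i+2}$, and its two endpoints lie strictly inside $C_{i+1} \cap C_{i+2}$, which is precisely axiom~3${}'$; this produces a ladder net with $\partial N = Q$.

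The main obstacle is the exhaustion step together with the ladder bookkeeping: guaranteeing at every stage a finite $2$-connected subgraph whose outer walk is a single cycle enclosing exactly the chosen vertices, and, in the ladder case, controlling $C_i \cap C_{i+1}$ to be a subpath of $Q$ whose endpoints strictly advance so that no overlap shares an endpoint with the next. Both points lean on $2$-connectivity and planarity to prevent the outer boundary from pinching at a cutvertex and to force the overlaps to grow; once these are in hand, verifying the remaining net axioms is routine.
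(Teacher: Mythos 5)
The paper does not prove this lemma at all: it is imported verbatim from Dean, Thomas, and Yu (their (2.2)), so there is no in-paper argument to match yours against. Your outline is, in spirit, the same exhaustion-plus-dichotomy construction that underlies their proof, but as written it has genuine gaps, and they sit exactly where the lemma's difficulty lies. First, your exhaustion step works with the geometric inside of the current outer cycle, but the hypothesis is only that $G$ is a plane graph; the embedding need not be nice, and niceness is only available \emph{after} one has a sequence of cycles satisfying conditions 1 and 2 (Lemma \ref{thm:niceembedding} presupposes such a sequence). In a wild embedding with vertex accumulation points, the bounded side of a cycle can contain infinitely many vertices, so ``close up under everything inside the current outer cycle'' can produce an infinite graph; you must instead work with the combinatorial finite side $I(C)$, which may be the geometric outside, and then the claim that the resulting finite piece has your chosen cycle as its outer cycle needs a re-embedding argument you do not supply. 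Second, your structural claim that in a $2$-connected plane graph every face boundary is a cycle or a single $2$-way infinite path is false in general: in the two-way infinite ladder the unbounded face is bounded by \emph{two} disjoint double rays. So one-endedness must carry the whole weight, and your one-line justification does not: deleting a finite vertex set near two boundary double rays does not obviously leave two infinite components when both rays converge to the same (unique) end, which is precisely the situation you must exclude. That there is at most one face with infinite boundary and that its boundary is a single double ray is itself a nontrivial structural statement (of the kind proved in Yu's ladder-net paper), not a remark.

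Third, and most seriously, the core of the lemma is deferred rather than proved. In the radial case, ``thin the sequence (or re-choose each $C_{i+1}$ just outside $C_i$)'' is exactly the statement needing proof: one must show that when every face incident with $I(C_i)$ is finite, the outer walk of the subgraph obtained by swallowing all those faces is a single cycle disjoint from $C_i$ with $I(C_i)$ strictly inside it. In the ladder case one must additionally show that consecutive cycles intersect \emph{only} along the double ray $Q$, that each intersection is a nonempty path nested in the next, and that its endpoints strictly advance (condition 3${}'$) --- and also that the dichotomy ``some face has infinite boundary / no face does'' is exhaustive and stable along the exhaustion. You explicitly flag these as ``the main obstacle'' and assert that once they are in hand the rest is routine; that is an accurate assessment, but it means the proposal is a roadmap of the Dean--Thomas--Yu construction rather than a proof of the lemma.
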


How we find our $1$-way infinite Tutte path and SDR of its bridges in
every $3$-connected $2$-indivisible infinite plane graph will depend on
whether or not the graph contains a net, and, if so, what kind of net
the graph contains.  Let $G$ be a $3$-connected $2$-indivisible infinite
plane graph, and let $F$ be the set of vertices of infinite degree in
$G$.  By Lemmas \ref{thm:3connectedimplies2indivisible} and
\ref{thm:twoinfinite}, $|F| \leq 2$.  If $F = \emptyset$, then Lemma
\ref{thm:locallyfinitenet} guarantees that $G$ has a net.  If $|F| \geq
1$, then $G$ contains a spanning subgraph $H$ such that either

\begin{enumerate}

\item $H$ contains a ladder net (Lemma \ref{thm:infiniteblock}), or

\item $H$ is a $1$-way infinite plane chain of circuit blocks (Lemma
\ref{thm:infiniteblocks}).

\end{enumerate}

Thus there are essentially three types of subgraphs to consider: radial nets, ladder nets, and $1$-way infinite plane chains of circuit blocks.  We will deal with radial nets and ladder nets in Theorem \ref{thm:extendradialnet} and Theorem \ref{thm:extendladdernet}, respectively, and then combine the three cases in Theorem \ref{thm:onewaypath}. 

We first need two structural lemmas about $2$-indivisible infinite plane graphs containing at least one vertex of infinite degree.  The first is similar to \cite[(2.3)]{bib:deanthomasyu}.

\begin{lemma}\label{thm:infiniteblock}
Let $G$ be a $3$-connected $2$-indivisible infinite plane graph, let $F$ be the set of vertices of infinite degree in $G$, and assume that $|F| \geq 1$ and that $G - F$ has an infinite block.  Then there exists a $2$-connected $2$-indivisible infinite subgraph $H$ of $G$ such that

\begin{enumerate}

\item $H$ contains a ladder net $N$,

\item $F \subseteq V(\partial N)$,

\item $H$ is $(3, \partial N)$-connected, and

\item the only $H$-bridges in $G$ are edges incident with at least one vertex in $F$ (so $H$ is a spanning subgraph of $G$).

\end{enumerate}
\end{lemma}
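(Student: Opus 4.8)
The plan is to construct the ladder net directly by an exhaustion of $G$, using the infinite block of $G-F$ as the ``body'' into which the vertices of $F$ radiate, and reading off the spine $\partial N$ from the rotational order of edges at the infinite-degree vertices. First I would record the reductions that make this possible. By Lemmas \ref{thm:3connectedimplies2indivisible} and \ref{thm:twoinfinite} we have $1 \le |F| \le 2$, and since $F$ contains every vertex of infinite degree, $G-F$ is locally finite; in particular the given infinite block $B$ of $G-F$ is a locally finite, $2$-connected, infinite plane graph. Because $G$ is $2$-indivisible and $|F|$ is finite, a short argument (any finite cut of $B$ together with $F$ is a finite cut of $G$, so it cannot leave two infinite components) shows $B$ is also $2$-indivisible, so by Lemma \ref{thm:locallyfinitenet} $B$ itself has a net; this supplies a nested, exhausting sequence of finite $2$-connected subgraphs that I can re-thread. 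Finally, since the subgraphs $I(C)$ are nested and exhaust $G$ along a cofinal sequence of cycles, Lemma \ref{thm:niceembedding} lets me assume throughout that $G$ is nicely embedded, so that ``inside a cycle'' always coincides with $I(\cdot)$.

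Next I would locate the spine. Fix $v \in F$. Since $G$ is $2$-indivisible, $G-v$ has a unique infinite component, and in the nice embedding all but finitely many of the infinitely many edges at $v$ run into it; their rotational order around $v$ therefore accumulates toward the direction(s) pointing to the infinite end. I would use the two extreme accumulation directions to route a $2$-way infinite path $\partial N$ through $v$ (and, when $|F|=2$, through the second infinite-degree vertex $v'$), keeping all of $G-V(\partial N)$ on one side. The remaining edges at $v$ (and $v'$), taken in rotational order, are exactly the edges that feed the successive rungs. I would then build nested cycles $C_1, C_2, \dots$ so that $C_i \cap C_{i+1}$ is a sub-path of $\partial N$ whose endpoints move strictly outward (this is condition $3'$), with $D_i = C_i$ minus the interior of $C_i \cap C_{i+1}$ a rung running into $B$ and back. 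Choosing the cycles to enclose an exhausting sequence of finite subgraphs guarantees $\bigcup_i I(C_i)$ is spanning; I would take $H$ to be this union together with all edges of $G$ except the edges incident with $F$ that cannot be fitted into the cycles without crossing.

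It then remains to verify the four conclusions. That $N=(C_1,C_2,\dots)$ is a ladder net with $\bigcup_i I(C_i)=H$ is immediate from the construction, and $F \subseteq V(\partial N)$ holds by the choice of spine, giving \emph{(1)} and \emph{(2)}. For \emph{(3)} I would invoke Lemma \ref{thm:ksconnected}: since $G$ is $3$-connected and $\partial N$ meets every rung, any set $T$ with $|T|<3$ leaves no component of $H$ avoiding $\partial N$, so $H$ is $(3,\partial N)$-connected; the same circle of ideas, together with $2$-indivisibility of $G$ and the exhaustion, shows $H$ is $2$-connected and $2$-indivisible. Conclusion \emph{(4)} is built in: every edge of $G$ lies in some $I(C_i)\subseteq H$ unless discarding it was forced, and the only discarded edges are incident with a vertex of $F$, so the only $H$-bridges in $G$ are such edges and $H$ is spanning.

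The main obstacle is entirely in the second paragraph: organizing the infinitely many edges at each vertex of $F$ into a clean spine-plus-rungs decomposition. I must show that the rotational order of $v$'s edges really does accumulate in a controlled way (a single infinite end when $|F|=1$, and a compatible pair of ends when $|F|=2$), so that the spine can be threaded through $F$ with every other edge at $F$ assigned to exactly one rung, and so that the cycles can be made simultaneously nested, exhausting, and pairwise overlapping in growing sub-paths of $\partial N$. Keeping $(3,\partial N)$-connectivity intact after discarding the unfittable edges at $F$ --- that is, checking that no discarded edge was needed to prevent a $2$-cut from separating an infinite piece from $\partial N$ --- is the delicate quantitative point, and is where the hypothesis that $G-F$ has an \emph{infinite block} does the real work.
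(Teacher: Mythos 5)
Your proposal is, in effect, an attempt to reprove Dean--Thomas--Yu's structural result \cite[(2.3)]{bib:deanthomasyu} from scratch, and the central step of that reproof is missing. The paper does not construct the spine and rungs by hand: it simply invokes \cite[(2.3)]{bib:deanthomasyu} to obtain a $2$-connected $2$-indivisible subgraph $H'$ containing a net $N'$ with $F \subseteq V(\partial N')$ (a ladder net, since $F \ne \emptyset$), together with control on the $H'$-bridges (finite, at most three attachments, all attachments in $V(\partial N')$ with at most one outside $F$). Everything in your second paragraph --- the accumulation behavior of the rotation at an infinite-degree vertex, threading a $2$-way infinite path through $F$, building nested cycles whose pairwise intersections are growing subpaths of $\partial N$ --- is exactly the content of that cited result, and you explicitly defer it (``the main obstacle is entirely in the second paragraph''). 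A sketch that names its own hardest step as unresolved is a gap, not a proof; without it, conclusions (1), (2), and (4) all rest on an unconstructed object. (Your preliminary reductions, e.g., that the infinite block $B$ of $G-F$ is $2$-indivisible and has a net by Lemma \ref{thm:locallyfinitenet}, are fine, but you never address how a net of $B$ --- which could even be radial --- gets re-threaded through $F$, nor how the discarded edges at $F$ are certified to be only edges.)

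Second, your plan assumes by fiat that everything left out of $H$ is a single edge at $F$, whereas the actual delicate case in the paper's proof is a \emph{nontrivial} $H'$-bridge: when $|F| = 2$ there can be a bridge $B$ with exactly three attachments $f_1$, $f_2$, and one vertex $v \in V(\partial N') - F$. The paper shows $v$ must lie on $\partial N'[f_1,f_2]$ (otherwise a finite cycle would separate the infinite-degree vertex $f_1$ from the rest of $G$), that by planarity there is exactly one such bridge, and then absorbs it into $H = H' \cup B$, rerouting each cycle $C_i$ of the net through the outer walk of $B$ to restore the ladder-net structure. Your construction has no counterpart to this absorption-and-reroute step. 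Finally, your verification of conclusion (3) is circular: you assert that $|T| < 3$ ``leaves no component of $H$ avoiding $\partial N$'' essentially because $G$ is $3$-connected, but $H$ is a proper spanning subgraph and $3$-connectivity of $G$ does not transfer for free; the paper obtains $(3,\partial N)$-connectedness cleanly from Lemma \ref{thm:ksconnected}\ref{ksc}, using that $H$ is a union of $V(\partial N)$-bridges of the $3$-connected graph $G$ --- a hypothesis your ad hoc ``discard the unfittable edges'' construction does not obviously satisfy, as you yourself note in your closing caveat.
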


\begin{proof} 

By Lemmas \ref{thm:3connectedimplies2indivisible} and \ref{thm:twoinfinite}, $|F| \leq 2$.  By \cite[(2.3)]{bib:deanthomasyu}, $G$ contains a $2$-connected $2$-indivisible infinite subgraph $H'$ such that $H'$ contains a net $N'$ and $F \subseteq V(\partial N')$. Since $F \ne \emptyset$, $N'$ must be a ladder net. Also by \cite[(2.3)]{bib:deanthomasyu}, every $H'$-bridge of $G$ is finite and has at most three attachments. Since $G$ is $3$-connected, any nontrivial $H'$-bridge in $G$ must have exactly three attachments. By the proof of \cite[(2.3)]{bib:deanthomasyu}, the attachments of any $H'$-bridge must be contained in $V(\partial N')$ and at most one of these attachments is in $V(\partial N') - F$. Thus any nontrivial $H'$-bridge must have exactly two attachments in $F$ and exactly one attachment in $V(\partial N') - F$.

If there are no nontrivial $H'$-bridges in $G$ (which must happen if $|F| = 1$ and may happen if $|F| = 2$), we set $H = H'$ and $N = N'$. Note that $H$ is $(3, \partial N)$-connected by Lemma \ref{thm:ksconnected}\ref{ksc}. Any $H'$-bridge is trivial and must have an attachment in $F$. Therefore $H$ and $N$ are as desired.

Otherwise, we may assume that there is at least one nontrivial $H'$-bridge. Then $|F| = 2$, so let $F = \{f_1, f_2\}$ and assume that $f_1$ comes before $f_2$ in $\overrightarrow{\partial N'}$. If $B$ is a nontrivial $H'$-bridge with $v$ as its attachment in $V(\partial N') - F$, we show that $v \in \partial N'[f_1, f_2]$.  If not, we may assume that $v$ comes before $f_1$ in $\overrightarrow{\partial N'}$. Then $L = B \cup \partial N'[v, f_2]$ is finite and contains a cycle which separates $f_1$ from $G - V(L)$, which contradicts the fact that $f_1$ has infinite degree.

By planarity, there is exactly one nontrivial $H'$-bridge $B$ with attachments $f_1$, $f_2$, and $v$, where $v \in \partial N'[f_1, f_2]$. Then $H = H' \cup B$ is a $2$-connected $2$-indivisible spanning subgraph of $G$. All $H$-bridges are trivial and have at least one attachment in $F$. Let $D$ denote the portion of $X_B$ from $f_1$ to $f_2$ that does not contain $v$. Then we modify the net $N' = (C_1', C_2', C_3', \dots)$ to form the net $N = (C_1, C_2, C_3, \dots)$ by setting $C_i = (C_i' - V(\partial N'[f_1, f_2])) \cup D$ for every $i \geq 1$. $N$ is a ladder net, $F \subseteq V(\partial N)$, and $H$ is $(3, \partial N)$-connected by Lemma \ref{thm:ksconnected}\ref{ksc}.
\end{proof}

The second structural lemma is proved in a similar way to that of Lemma \ref{thm:infiniteblock},
much as \cite[(2.4)]{bib:deanthomasyu} is similar to \cite[(2.3)]{bib:deanthomasyu}.

\begin{lemma}\label{thm:infiniteblocks}
Let $G$ be a $3$-connected $2$-indivisible infinite plane graph, let $F$ be the set of vertices of infinite degree in $G$, and assume that $|F| \geq 1$ and that $G - F$ has no infinite block.  Then $|F| = 2$, and there exists a connected subgraph $H$ of $G$ such that

\begin{enumerate}

\item $H$ is a $1$-way infinite plane chain of circuit blocks $(B_1, b_1, B_2, b_2, \dots)$,

\item $F \subseteq V(X_{B_1}) - \{b_1\}$,

\item the only $H$-bridges in $G$ are edges incident with at least one vertex in $F$ (so $H$ is a spanning subgraph of $G$).

\end{enumerate}
\end{lemma}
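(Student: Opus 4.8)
The plan is to follow the same strategy used for Lemma \ref{thm:infiniteblock}, starting from the Dean--Thomas--Yu structural result \cite[(2.4)]{bib:deanthomasyu} rather than their (2.3). First I would invoke \cite[(2.4)]{bib:deanthomasyu} to produce a subgraph $H'$ that carries the relevant net/chain structure together with the information that every $H'$-bridge of $G$ is finite with at most three attachments. The hypothesis that $G - F$ has no infinite block is what forces the alternative in which the structure degenerates to a chain of finite blocks rather than a ladder net, and I expect this is precisely where $|F| = 2$ is forced: a single infinite-degree vertex would (together with $2$-indivisibility and $3$-connectivity) produce an infinite block in $G - F$, so ruling out infinite blocks should exclude $|F| = 1$. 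I would record $F = \{f_1, f_2\}$ and fix an orientation so that $f_1$ precedes $f_2$ along the chain, exactly as in the ladder-net case.

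Next I would analyze the $H'$-bridges. By $3$-connectivity of $G$, every nontrivial $H'$-bridge has exactly three attachments; by the argument transported from \cite[(2.4)]{bib:deanthomasyu} (mirroring the corresponding step in Lemma \ref{thm:infiniteblock}), at most one attachment lies outside $F$, so each nontrivial bridge attaches at $f_1$, $f_2$, and one further vertex $v$ on the chain. The separation argument used before applies verbatim: if $v$ lay outside the segment between $f_1$ and $f_2$, then the bridge together with the intervening portion of the chain would be finite and contain a cycle separating one $f_i$ from the infinite remainder, contradicting that $f_i$ has infinite degree. Planarity then allows at most one such bridge $B$, and I would absorb it by setting $H = H' \cup B$, just as in the ladder-net proof.

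It remains to verify the three listed conclusions for $H$. Conclusion~1, that $H$ is a $1$-way infinite plane chain of circuit blocks, should follow by applying Lemma \ref{thm:circuitinduction} to the nontrivial finite blocks of the chain (each bounded by a cycle with nothing of the relevant connected structure strictly inside), so that each is a circuit block, and by checking that no block is embedded inside another in the nice embedding. Conclusion~2, $F \subseteq V(X_{B_1}) - \{b_1\}$, records that both infinite-degree vertices sit on the outer walk of the first block and are not the cutvertex joining it to the rest of the chain; I would establish this by the same boundary bookkeeping that placed $F$ on $\partial N$ in the ladder case. Conclusion~3, that the only $H$-bridges in $G$ are trivial edges incident with a vertex of $F$, follows because we have absorbed the unique nontrivial bridge into $H$, leaving only edges each of which, by the attachment analysis above, must meet $F$.

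\medskip
\noindent\textbf{Main obstacle.} The step I expect to be most delicate is deducing the \emph{chain} structure (Conclusion~1) with \emph{finite} blocks directly from ``$G - F$ has no infinite block,'' and simultaneously forcing $|F| = 2$. In the ladder-net lemma the net is supplied ready-made by \cite[(2.3)]{bib:deanthomasyu}; here I must instead argue that the absence of an infinite block in $G - F$ degenerates the \cite[(2.4)]{bib:deanthomasyu} structure into a linear arrangement of finite blocks, and that this arrangement genuinely extends to infinity (so that $H$ is a $1$-way infinite, not finite, chain) while still spanning $G$. Making the transition from the raw Dean--Thomas--Yu output to a clean ``$1$-way infinite plane chain of circuit blocks'' — and confirming that the two infinite-degree vertices land on the outer walk of the initial block — is the technical heart of the proof.
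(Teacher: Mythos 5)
Your proposal follows exactly the route the paper intends: the paper omits this proof, stating only that it goes through in the same way as Lemma \ref{thm:infiniteblock} with \cite[(2.4)]{bib:deanthomasyu} in place of \cite[(2.3)]{bib:deanthomasyu}, which is precisely your plan (attachment analysis of the $H'$-bridges, the separation argument forcing the third attachment into the segment between $f_1$ and $f_2$, planarity giving at most one nontrivial bridge to absorb, then verification of the three conclusions). One small simplification to the step you flagged as delicate: $|F|=2$ is immediate, since if $F=\{f\}$ then $G-F=G-f$ is $2$-connected (because $G$ is $3$-connected) and hence is itself an infinite block, contradicting the hypothesis.
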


In each of the previous two lemmas, the proofs construct the desired
subgraph $H$ in such a way that we may assume that $H$ and $G$ are
nicely embedded in the plane.  The subgraph $H$ is locally finite in
both cases.  Also, if $|F| = 2$ and $F = \{f_1, f_2\}$, assuming that
$f_1$ comes before $f_2$ in $\overrightarrow{\partial N}$ ($X_H$), the
construction rules out any $H$-bridges in $G$ that are edges joining one
vertex in $F$ to a vertex in $V(\partial N[f_1, f_2]) - F$ (in
$V(X_H[f_1, f_2]) - F$, respectively).  We also remark that it follows
from these lemmas that every $3$-connected $2$-indivisible infinite
planar graph is countable.

The next two lemmas allow us to consider only tight nets instead of more
general nets.  The first lemma follows from the proof of
\cite[(2.6)]{bib:deanthomasyu} (in the first sentence of their proof,
they choose $C_1$ to be any facial cycle of $G$ with $u \in V(C_1)$,
which is why we are able to specify such a cycle $C$ in our Lemma
\ref{thm:tightradialnet}).
 The second lemma is similar to \cite[(2.5)]{bib:deanthomasyu} and Claim
1 in the proof of \cite[(3.6)]{bib:yu3}, and can be proven in a similar
way as those results --- a difference is that, with our slightly
modified definition of a tight ladder net, we can treat the first cycle
of the net in the same way as all of the other cycles.

\begin{lemma}\label{thm:tightradialnet}
Let $G$ be a $2$-connected $2$-indivisible plane graph with a radial net, let $u \in V(G)$, and let $C$ be any facial cycle of $G$ with $u \in V(C)$.  Then $G$ has a tight radial net $N = (C_1, C_2, C_3, \dots)$ with $C_1 = C$ (and hence $u$ is a vertex of $C_1$).
\end{lemma}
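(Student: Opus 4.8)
The plan is to follow the inductive tightening construction that Dean, Thomas, and Yu use for their \cite[(2.6)]{bib:deanthomasyu}, the point being that prescribing $C_1 = C$ requires no extra work. Since $G$ has a net, by Lemma~\ref{thm:niceembedding} we may take $G$ to be nicely embedded with the prescribed facial cycle $C$ still facial, and we fix this embedding. The base case is then immediate: the face bounded by $C$ contains no vertices of $G$, while the other side of $C$ contains the remaining (infinitely many) vertices, so the finite side of $C$ is its empty face and $I(C) = C$. Taking $C_1 = C$ therefore satisfies tightness condition~(1), and tightness condition~(2) is automatically satisfied for a radial net because its cycles are pairwise disjoint (so $C_1 \cap C_2 = \emptyset$). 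Also $u \in V(C_1)$, as required.

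For the inductive step I would choose $C_{i+1}$ by a minimality principle. Because $G$ has a net it is locally finite, hence (being connected) countable, so fix an enumeration $v_1, v_2, \dots$ of $V(G)$. Suppose $C_1, \dots, C_i$ have been chosen forming a tight initial segment. Consider the family of cycles $C'$ with $C_i$ lying strictly inside $C'$ (so $I(C_i) \subsetneq I(C')$ and $C' \cap C_i = \emptyset$) and with $v_i \in V(I(C'))$; this family is nonempty because some cycle of the original radial net encloses both the finite graph $I(C_i)$ and the vertex $v_i$. Among all such $C'$, pick $C_{i+1}$ with $|V(I(C_{i+1}))|$ as small as possible. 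By construction the $I(C_j)$ are strictly increasing and nested (net condition~1), forcing $v_i \in V(I(C_{i+1}))$ yields $\bigcup_i I(C_i) = G$ (net condition~2), and each $C_{i+1}$ is disjoint from $C_i$, so $N = (C_1, C_2, \dots)$ is a radial net (condition~3).

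The crux, and the step I expect to be the main obstacle, is deducing tightness condition~(3) from this minimality. Suppose for contradiction that some $C_{i+1}$-bridge $J$ in $I(C_{i+1}) - V(I(C_i))$ has two attachments $x, y$ on $C_{i+1}$. Since $J$ lies inside $C_{i+1}$ and outside $C_i$, replacing the arc of $C_{i+1}$ between $x$ and $y$ on one side by an $xy$-path through $J$ yields a cycle $C'$ that is indented inward along the annulus, so $I(C') \subsetneq I(C_{i+1})$ while $C_i$ remains strictly inside $C'$. The delicate points --- handled exactly as in the proof of \cite[(2.6)]{bib:deanthomasyu} --- are to verify that the two choices of arc let us keep $v_i$ enclosed (routing the replacement path through $v_i$ if $v_i$ happens to lie in $J$), and that $C'$ is a genuine cycle disjoint from $C_i$; planarity and the $2$-connectivity of $G$ are what make these go through. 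The resulting $C'$ contradicts the minimality of $|V(I(C_{i+1}))|$, so no such bridge exists and tightness holds, completing the induction.
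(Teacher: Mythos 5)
Your overall route coincides with the paper's: the paper gives no independent argument for Lemma~\ref{thm:tightradialnet}, remarking only that it follows from the proof of \cite[(2.6)]{bib:deanthomasyu} because the first step of that proof already takes $C_1$ to be an arbitrary facial cycle through $u$ --- the same observation that drives your base case (with the nice embedding from Lemma~\ref{thm:niceembedding}, a facial cycle $C$ in the infinite graph $G$ has $I(C) = C$, giving tightness conditions (1) and (2) at once). Your inductive scheme --- nest a new cycle around $I(C_i)$, force exhaustion by enclosing $v_i$, take a minimal choice, and kill multi-attachment bridges by rerouting through them --- is exactly the Dean--Thomas--Yu tightening construction that the paper is invoking.

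However, your minimality criterion is too weak to run the rerouting contradiction, and this is a genuine gap in the sketch as written. From $I(C') \subsetneq I(C_{i+1})$ you conclude that $|V(I(C_{i+1}))|$ was not minimal, but a proper subgraph can have the same vertex set. Concretely, let the offending bridge $J$ be a path $x\,w\,y$ whose attachments $x,y$ are joined by an edge $xy$ of $C_{i+1}$, with the region bounded by $x\,w\,y$ and $xy$ containing no vertices: rerouting through $w$ displaces only the edge $xy$, and since $w$ already belonged to $I(C_{i+1})$ we get $|V(I(C'))| = |V(I(C_{i+1}))|$, so your chosen cycle may legitimately retain this two-attachment bridge. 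The repair is to take $I(C_{i+1})$ minimal as a \emph{subgraph}, e.g.\ to minimize $|V(I(C_{i+1}))| + |E(I(C_{i+1}))|$: every rerouting removes at least one edge, namely the edges of the displaced arc of $C_{i+1}$ from $x$ to $y$, which lie outside $I(C')$. A second soft spot is your parenthetical treatment of $v_i$: ``routing the replacement path through $v_i$ if $v_i$ happens to lie in $J$'' does not suffice, since $v_i$ need not lie in $J$ at all (it could be interior to the displaced region in a different bridge, or an interior vertex of the displaced arc), and even when $v_i \in V(J)$ a simple $xy$-path in $J$ through a prescribed internal vertex need not exist. A robust formulation strengthens the constraint to require $C_{i+1}$ to enclose $I(C_i)$ together with all neighbors of $V(I(C_i))$ (exhaustion then follows from connectivity, local finiteness making this neighbor set finite); no neighbor of $V(I(C_i))$ can lie in the displaced region or on the interior of the displaced arc, since the connecting edge would have to cross the rerouting path, so the rerouted cycle automatically remains feasible and the minimality contradiction goes through.
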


\begin{lemma}\label{thm:tightladdernet}
Let $G$ be a $2$-connected $2$-indivisible plane graph with a ladder net $N'$.  Let $u$ and $v$ be vertices of $\partial N'$, and suppose that $u$ comes before $v$ in $\overrightarrow{\partial N'}$ if $u$ and $v$ are distinct.  Then $G$ has a ladder net $N = (C_1, C_2, C_3, \dots)$ such that $u, v \in V(C_1) - V(D_1)$, $\partial N = \partial N'$, and $N$ is a tight ladder net with respect to $\partial N[u, v]$.
\end{lemma}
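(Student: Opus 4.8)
The plan is to construct $N$ greedily from the inside out, producing every arch $D_{i+1}$ (including $D_1$) by a single uniform \emph{tightening step}, exactly in the spirit of the proofs of \cite[(2.5)]{bib:deanthomasyu} and Claim~1 in the proof of \cite[(3.6)]{bib:yu3}. By Lemma~\ref{thm:niceembedding} I may assume $G$ is nicely embedded, so every finite cycle $C$ bounds a closed disk meeting $G$ in exactly $I(C)$; I take $\partial N := \partial N'$ with the orientation $\overrightarrow{\partial N'}$, so $G - V(\partial N)$ lies to the right. Since a graph with a net is locally finite, $G$ is countable, and I fix an enumeration $w_1, w_2, \dots$ of $V(G)$. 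For the base case, because $u$ comes before $v$ (or $u = v$) on $\overrightarrow{\partial N}$, the segment $D_0 := C_0 := \partial N[u,v]$ is a finite subpath of the boundary; I regard it as a degenerate zeroth arch lying on $\partial N$ and set $R_0 := D_0$, a disk with empty interior. The modified definition of tightness is designed precisely so that ``tight with respect to $D_0$'' is exactly the $i=0$ instance of the uniform tightness condition (3), which is what lets me treat $C_1$ like every later cycle.

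\emph{The tightening step.} Suppose I have a finite disk $R_i \supseteq R_{i-1}$ whose outer boundary consists of an arch $D_i$ (a path with both endpoints on $\partial N$, interior to the right) together with the boundary segment between those endpoints, and whose bridges are already tight. Let $w$ be the first vertex not in $R_i$. Among all paths through $w$ with both endpoints on $\partial N$ strictly beyond the current overlap endpoints, whose disk strictly contains $R_i$, I take $D_{i+1}$ bounding the \emph{innermost} (smallest) such disk $R_{i+1} = I(C_{i+1})$; two internally disjoint $w$--$\partial N$ paths avoiding the interior of $R_i$ exist because $G$ is $2$-connected, so this is nonvacuous. The single-attachment property of the new annular bridges — tightness condition (3) — now follows from minimality: a $D_{i+1}$-bridge $B$ in $R_{i+1} - V(R_i)$ with two attachments $p, q$ on $D_{i+1}$ (necessarily $p,q \neq w$, since $w \in V(D_{i+1})$) has all its internal vertices on the inner side of $D_{i+1}$ and off $R_i$, so rerouting $D_{i+1}[p,q]$ through $B$ yields a strictly smaller admissible disk, a contradiction. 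Choosing the endpoints strictly beyond the previous ones gives strictly increasing overlaps and the ``no shared endpoint'' requirement of $3'$, and makes each $C_i \cap C_{i+1}$ a nonempty path, so condition (2) holds.

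\emph{Covering and conclusion.} Each step adds at least the vertex $w$, so $\bigcup_i R_i = G$, and as the arch endpoints advance toward both ends of $\overrightarrow{\partial N}$ the overlaps exhaust the boundary, giving $\partial N = \partial N'$. Since $I(C_i) = R_i \subseteq R_{i+1} = I(C_{i+1})$ and each $C_i \cap C_{i+1}$ is a nonempty path nested in the next with strictly advancing endpoints, $N = (C_1, C_2, \dots)$ is a ladder net; it is tight by the step above and tight with respect to $D_0 = \partial N[u,v]$ by the base case. Finally, $D_0$ lies strictly inside every overlap, so $u, v \in V(C_1) - V(D_1)$, as required.

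\emph{Main obstacle.} The only genuinely delicate point is the tightening step: verifying that an innermost enclosing arch through $w$ exists and that minimality really forces each new annular bridge to have a single attachment, while simultaneously meeting the strict-endpoint demands of $3'$ and guaranteeing the progress that exhausts $G$. This bookkeeping is precisely what is carried out in \cite[(2.5)]{bib:deanthomasyu} and \cite[(3.6)]{bib:yu3}, and our revised notion of tightness is exactly what collapses the treatment of the first cycle into this single step.
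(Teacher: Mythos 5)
Your framing matches the paper's intent: the paper gives no self-contained proof of this lemma, saying only that it ``can be proven in a similar way'' as \cite[(2.5)]{bib:deanthomasyu} and Claim 1 in the proof of \cite[(3.6)]{bib:yu3}, and the one point the paper does make --- that the modified definition of tightness lets the first cycle be handled by the same step as all later ones, with $D_0 = \partial N[u,v]$ playing the role of the previous disk --- is exactly the point your base case makes. So the skeleton is right. But the one piece of actual mathematics you supply, the claim that minimality of the disk forces tightness condition (3), has a genuine gap, and it sits precisely at the step you yourself flag as the crux. Your parenthetical ``necessarily $p,q \neq w$, since $w \in V(D_{i+1})$'' is a non sequitur: attachments of a $D_{i+1}$-bridge are by definition vertices \emph{of} $D_{i+1}$, so $w$ can be an attachment; worse, $w$ can lie strictly in the interior of $D_{i+1}[p,q]$, or be an internal vertex of another bridge hanging off that interior. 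In all of those cases, rerouting $D_{i+1}[p,q]$ through $B$ expels $w$ from the new disk, so the strictly smaller disk is \emph{not} admissible and minimality gives no contradiction. This is not a removable technicality. Consider an arch $D_{i+1} = a\,p\,w\,q\,b$ through $w$, with a vertex $b'$ inside the disk adjacent exactly to $p$ and $q$: the only admissible disk containing $w$ is the one bounded by this arch, and it carries the two-attachment bridge on $b'$, so ``innermost disk through $w$'' is simply not tight. Reconciling the covering requirement with the minimality argument is the real content of the cited proofs (which anchor the new cycle to whole disks $I(C_j')$ of the old net rather than to a single enumerated vertex sitting on the new arch), and deferring it wholesale to \cite{bib:deanthomasyu} and \cite{bib:yu3} leaves your write-up short of a proof.

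Two smaller gaps of the same character. First, $2$-connectivity alone does not give an admissible arch through $w$: your two $w$--$\partial N$ paths avoiding $R_i$ may both land on the same side of the current overlap, in which case the resulting cycle fails to enclose $R_i$. Existence should instead come from the old net: for $j$ large, the endpoints of $D_j'$ lie strictly beyond the current endpoints and $I(C_j') \supseteq R_i \cup \{w\}$, so $C_j'$ itself is an admissible candidate. Second, note that $I(C_{i+1}) - V(I(C_i))$ contains the vertices of $\partial N$ strictly between the old and new arch endpoints, so a $D_{i+1}$-bridge can contain such boundary vertices; rerouting through such a bridge produces a ``cycle'' whose arch meets $\partial N$ internally, forcing you to re-declare the arch endpoints and redo the ``strictly beyond'' and nesting bookkeeping of condition $3'$. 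Both points are repairable, but together with the main gap they show that the tightening step cannot be dispatched by the one-line minimality argument as written.
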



\section{Paths and $2$-walks}\label{section:pathsandwalks}

We will now find a $1$-way infinite Tutte path $P$ and an SDR of the
nontrivial $P$-bridges in every $3$-connected $2$-indivisible infinite
planar graph.  Using the SDR of the $P$-bridges, we then detour into
each bridge to build our $1$-way infinite $2$-walk.

Let $G$ be a $2$-indivisible plane graph with a net $N = (C_1, C_2, C_3,
\dots)$.  Then a $uv$-path $P$ in $G$ is a \emph{forward $uv$-path} if
whenever vertices $u, x, y, v$ occur on $P$ in this order, there is no
$i \in \{1, 2, 3, \dots\}$ such that $x \in V(C_{i+2}) - V(C_{i+1})$ and
$y \in V(C_i)$.  A forward path may move ``backward'' a little, but only
to a limited extent.  We will find forward Tutte paths in finite
portions of nets, and then use the fact that these paths are forward to
show (using a variation of K\H{o}nig's Lemma) that they converge
to a $1$-way infinite Tutte path.

We first focus on radial nets.  The following theorem is similar to
\cite[(3.4)]{bib:deanthomasyu}, but we assume that the graph is $(3,
C_1)$-connected, and we find an SDR of the $P$-bridges.

 \begin{theorem}\label{thm:extendradialnet}
 Let $G$ be a $2$-indivisible plane graph with a tight radial net $N =
(C_1, C_2, C_3, \dots)$ such that $G$ is $(3,C_1)$-connected, and let $u
\in V(C_1)$.
 Then for every $k \in \{1, 2, 3, \dots \}$, there exist a
vertex $v \in V(C_k)$, a forward $C_1$-Tutte $uv$-path $P$ in $I(C_k)$,
and an SDR $S$ of the $P$-bridges.
 \end{theorem}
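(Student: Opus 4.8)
The plan is to induct on $k$. For the base case $k=1$, tightness (condition~1 for radial nets) gives $I(C_1)=C_1$, so I would take $v=u$ and let $P$ be the trivial one-vertex path at $u$; the single nontrivial $P$-bridge is $C_1-u$, which has the one attachment $u$ and so satisfies both Tutte conditions, and I assign it any internal representative to get the SDR. The content is entirely in the inductive step: given a forward $C_1$-Tutte $uv$-path $P_k$ in $I(C_k)$ with $v\in V(C_k)$ and an SDR $S_k$ of its bridges, I want to produce the corresponding data in $I(C_{k+1})$ by crossing the annular region between $C_k$ and $C_{k+1}$.

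Let $R$ be the closed annulus between the two cycles, i.e.\ the subgraph of $G$ on $C_k$, $C_{k+1}$, and the vertices strictly between them. Because the net is radial (so $C_k$ and $C_{k+1}$ are disjoint) and the embedding is nice, every $R$-bridge of $G$ attaches only on $C_k$ (the material of $I(C_k)$) or only on $C_{k+1}$ (the material outside $C_{k+1}$), so $A_G(R)\subseteq V(C_k)\cup V(C_{k+1})$; by Lemma~\ref{thm:ksconnected}\ref{ksd}, $R$ is $(3,A_G(R))$-connected. The tightness condition~3 ($D_{k+1}=C_{k+1}$ here) says that every $C_{k+1}$-bridge in $R-V(I(C_k))$ has at most one attachment, which is exactly the statement that, once the inner boundary is removed, the material hanging from $C_{k+1}$ breaks into pieces meeting $C_{k+1}$ in single vertices. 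I would use this to open $R$ into a disk — the ring analogue of a circuit graph — cutting at the entry vertex $v$ so that $v$ becomes one endpoint and a chosen $v'\in V(C_{k+1})$ the other; then, via Lemmas~\ref{thm:circuitinduction} and~\ref{thm:chaindelete4}, the opened region is a plane chain of circuit blocks along a path in its outer walk from $v$ to $v'$, and SDR~SP1 (or SDR~SP2, if a deleted vertex must be carried along) produces a crossing $vv'$-path $Q\subseteq R$ together with an SDR $S_Q$ of its bridges with $v\notin S_Q$.

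I then set $P_{k+1}=P_k\cup Q$ and $S_{k+1}=S_k\cup S_Q$, and verify the three requirements. The forward property is the easiest: since $Q$ lies entirely in the ring, any two cycle-vertices $x,y$ on $Q$ can only lie on $C_k$ and $C_{k+1}$, whose indices differ by one, so no pair $x\in V(C_{i+2})-V(C_{i+1})$, $y\in V(C_i)$ can occur within $Q$; a violating pair with $x$ on $P_k$ and $y$ on $Q$ would force $x\in V(C_{i+2})$ with $i+2>k$, impossible inside $I(C_k)$; and pairs within $P_k$ are ruled out by the inductive hypothesis. So $P_{k+1}$ is forward. The $C_1$-Tutte and SDR conditions I would assemble with the Jigsaw Principle (Lemma~\ref{thm:jigsaw}) applied block-by-block rather than as a single split of $I(C_{k+1})$ into $I(C_k)$ and $R$, since $P_k$ and $Q$ meet only near $v$ and not along all of $C_k$.

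The hard part will be precisely this gluing across $C_k$ — the step the authors flag as the most technical. A $P_k$-bridge of $I(C_k)$ that contains a vertex of $C_k$ as an \emph{internal} vertex will, in $I(C_{k+1})$, merge with whatever ring material attaches to $C_k$ there, so the corresponding $P_{k+1}$-bridge can acquire new attachments on $Q$; the danger is that such a merged bridge exceeds three attachments or that the combined map $S_{k+1}$ fails to be injective. I expect to control this by exploiting that $P_k$ is forward (hence hugs the outer cycle $C_k$, so bridges reaching $C_k$ have few attachments there), together with the tightness of $N$ and the reassignment device of Remark~\ref{thm:2att3att}: whenever re-attaching at $C_k$ turns a two-attachment bridge into a three-attachment bridge, its existing representative is retained, and disjointness of $S_k$ from $S_Q$ is arranged by keeping the ring representatives off $C_k$ (using $v\notin S_Q$ and the solid-vertex convention). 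Making this bookkeeping airtight — that every merged bridge still has at most three attachments, at most two if it meets $C_1$, and that $S_{k+1}$ remains an SDR — is where the real work lies.
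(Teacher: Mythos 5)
Your induction runs in the opposite direction from what the hypothesis can support, and the step you yourself flag as ``where the real work lies'' genuinely fails as set up. The inductive hypothesis gives a $C_1$-Tutte path $P_k$ in $I(C_k)$: the only attachment control this provides is for bridges containing an edge of $C_1$. It says nothing about how $P_k$-bridges meet $C_k$, which is exactly the interface along which you propose to glue. A $P_k$-bridge $B$ may have three attachments and contain a long segment of $C_k$ all of whose vertices are internal to $B$; in $I(C_{k+1})$ the annulus material hanging at those internal $C_k$-vertices merges with $B$ into a single $P_{k+1}$-bridge that can acquire additional attachments on $Q$ or on $C_{k+1}$, exceeding three, and no reassignment of representatives can repair an attachment count. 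Neither of your proposed remedies helps: forwardness constrains only the order in which net levels appear along the path, not how the path or its bridges meet $C_k$ (a forward path need not ``hug'' $C_k$ at all), and Remark \ref{thm:2att3att} covers only the re-addition of a single deleted vertex to bridges already known to have two attachments. Note also that tightness gives single-attachment control on the wrong side for your scheme: condition 3 bounds the attachments of annulus bridges on $C_{k+1}$ after all of $I(C_k)$ is deleted, while their attachments on $C_k$ are completely unconstrained.

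The paper evades all of this by inducting in the other direction: it peels off the \emph{innermost} cycle. Let $H$ be the block of $I(C_k) - V(C_1)$ containing $C_k$; then $(C_2, C_3, \dots)$ is a tight radial net in $H$, $H$ is $(3, C_2)$-connected by Lemma \ref{thm:ksconnected}\ref{ksd}, and induction yields a forward $C_2$-Tutte $t_1v$-path $P'$ in $H$. The gluing interface is now $C_2$ --- precisely the cycle relative to which the inductive path is Tutte --- so every $P'$-bridge containing an edge of $C_2$ has exactly two attachments, both on $C_2$, and tightness forces each $(H \cup C_1)$-bridge to meet $C_2$ in a single tip. These two facts are what make the merging bookkeeping work: the subgraphs $K_{t_i}$, $K_D$, and $K_i$ are dispatched by SDR SP2, SP3, and SP1 respectively (with the representative $c_D$ of each absorbed bridge $D$ traded away), and a separate construction around $u$ (splitting $F - t_1$ into chains $H_1, H_2, H_3$, choosing the neighbor $w$ of $t_1$, and applying Corollary \ref{thm:gryedge} through the edge $b_{z-1}w$) produces the disjoint $q_nu$- and $t_1q_1$-paths needed to start at $u$. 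To salvage your outward annulus-crossing scheme you would have to strengthen the inductive statement to a path that is also Tutte relative to the current outer cycle $C_k$, as Theorem \ref{thm:extendladdernet} does with $X_{G_{r,s}}[x_s,y_s]$; as written, your hypothesis is too weak for your step. A small further point: in the base case the representative of the lone bridge $C_1 - u$ must be its attachment $u$ (representatives serve as entry points on $P$; cf.\ the assignment functions in the proof of Theorem \ref{thm:onewaypathlocfin}), so $S = \{u\}$, not an arbitrary internal vertex.
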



 \begin{proof}
 The proof is by induction on $k$.  For $k = 1$, the one-vertex path $v
= u$ suffices, with $S = \{u\}$.

 So we can assume that $k > 1$ and that the statement holds for positive
integers less than $k$.  Let $H$ be the block of $I(C_k) - V(C_1)$
containing $C_k$, with the embedding inherited from $G$.  Since
$N$ is tight and $G$ is nicely embedded, $C_2$ bounds a face of $H$
containing $C_1$, and every $(H \cup C_1)$-bridge has at most one vertex
of attachment, called a \emph{tip}, in $C_2$.

 Let $t_1, t_2, \dots, t_n \in V(C_2)$ be all tips of $(H \cup
C_1)$-bridges of $I(C_k)$, listed in clockwise cyclic order on $C_2$. 
Since $G$ is $(3, C_1)$-connected, $n \geq 3$.  For $i = 1, 2, \dots,
n$, let $L_i$ be the union of all $(H \cup C_1)$-bridges that attach at
$t_i$.  The cycle $C_1$ has a collection of pairwise edge-disjoint
segments $\left\{ P(t_i) \right\}_{i=1}^n$ in clockwise cyclic order
such that, for every $i = 1, 2, \dots, n$, $P(t_i)$ contains $V(L_i)
\cap V(C_1)$ and the ends of $P(t_i)$ are in $V(L_i)$.  Since $n \ge 3$
the collection $\left\{ P(t_i) \right\}_{i=1}^n$ is well-defined.
 For each $i = 1, 2, \dots, n$, let $p_i$ and
$q_i$ be the endpoints of $P(t_i)$ so that $p_1, q_1, p_2,
q_2, \dots, p_n, q_n$ occur in $C_1$ in clockwise order.
 Let $u_1$ be the clockwise neighbor of $u$ on $C_1$.
 We may assume that $uu_1 \in C_1[q_n, q_1]$; hence $u \ne q_1$.

Now $(C_2, C_3, C_4, \dots)$ is a tight radial net in $H$.  Since $G$ is
$(3, C_1)$-connected, $H$ is $(3, C_2)$-connected by Lemma
\ref{thm:ksconnected}\ref{ksd}.  Therefore, by induction, there is a
vertex $v \in V(C_k)$, a forward $C_2$-Tutte $t_1v$-path $P'$ in $H$,
and an SDR $S'$ of the $P'$-bridges.
 Since $P'$ is $C_2$-Tutte and $G$ is $2$-connected, every
 $P'$-bridge $D$ containing an edge of $C_2$ has exactly two
attachments, which are on $C_2$; hence $V(D) \cap V(C_j) = \emptyset$ for
$j \ge 3$.

 We divide the $(H \cup C_1)$-bridges into three types:
(i) those with no tip, (ii) those with a tip on $P'$, and (iii) those
with a tip not on $P'$.  We will form collections of these bridges,
along with portions of $C_1$ that they span, to form subgraphs where we
can apply our standard pieces.
 If $t_i \in V(P')$, the subgraph $K_{t_i}$ collects together (is the
union of) $L_i$, $P(t_i)$, and every type (i) bridge with all
attachments in $P(t_i)$.
 If $t_i \in V(H) - V(P')$, then any bridge $J$ with tip $t_i$ is of
type (iii).  The $P'$-bridge $D$ in $H$ containing $t_i$ has exactly two
attachments, $c_D$ and $d_D$, which are in $C_2$.
 One of $c_D$ or $d_D$ is the representative of $D$ in $S'$.  Suppose
that $t_\ell, t_{\ell+1}, \dots, t_m$ are the tips in $V(D) - \{c_D,
d_D\}$.  The subgraph $K_D$ collects together $D, L_\ell, L_{\ell+1},
\dots, L_m, C_1[p_\ell, q_m]$, and every type (i) bridge with all
attachments in $C_1[p_\ell, q_m]$.

 For each $K_{t_i}$ with $i \ne 1$, $K_{t_i}$ is $(3, P(t_i) \cup
\{t_i\})$-connected by Lemma \ref{thm:ksconnected}\ref{ksd}, so by
Lemma \ref{thm:chaindelete4}\ref{cdb} we may apply SDR SP2 to obtain
a $p_iq_i$-path $P_{t_i}$ in $K_{t_i}$ avoiding $t_i$ such
that $P_{t_i} \pl \{t_i\}$ is a $P(t_i)$-Tutte subgraph and an SDR
$S_{t_i}$ of the $(P_{t_i} \pl \{t_i\})$-bridges with $q_i, t_i \notin
S_{t_i}$ (this is valid even if $p_i=q_i$).

 For each $K_D$, label $c_D$ and $d_D$ so that $c_D \in S'$, $d_D \notin
S'$.
 By Lemma \ref{thm:ksconnected}\ref{ksd},
 $K_D$ is $(3, C_1[p_\ell, q_m] \cup \{c_D, d_D\})$-connected.
 So SDR SP3 gives a $p_\ell q_m$-path $P_D$ in $K_D$ avoiding $c_D$ and
$d_D$ such that $P_D \pl \{c_D, d_D\}$ is a $C_1[p_\ell, q_m]$-Tutte
subgraph, and an SDR $S_D$ of the $(P_D \pl \{c_D, d_D\})$-bridges with
$q_m, d_D \notin S_D$.
 Then replacing $D$ and its representative $c_D$ by the $P_D$-bridges in
$K_D$ and $S_D$ does not use a representative more than once.

 For every segment $C_1[q_i, p_{i+1}]$, $i \ne n$ and $q_i \ne p_{i+1}$,
not already included in some $K_D$, we also collect $C_1[q_i, p_{i+1}]$
and every type (i) bridge with all attachments in $C_1[q_i, p_{i+1}]$ to
form a subgraph $K_i$.  By Lemmas \ref{thm:ksconnected}\ref{ksd} and
\ref{thm:chaindelete4}\ref{cda} we may apply SDR SP1 to $K_i$ to obtain
a $C_1[q_i, p_{i+1}]$-Tutte $q_ip_{i+1}$-path $P_i$ in $K_i$ and an SDR
$S_i$ of the $P_i$-bridges with $p_{i+1} \notin S_i$.


Let $P''$ be the union of $P_{t_i}$ for every $K_{t_i}$ considered
above, $P_D$ for every $K_D$ considered above, and $P_i$ for every $K_i$
considered above.  Let $S''$ be the union of every $S_{t_i}$, every
$S_D$, and every $S_i$.  Let $S_0'$ be the set of vertices $c_D$ for
every $K_D$ considered above.
 Then $P''$ is a $q_1q_n$-path, and by the Jigsaw Principle $P' \cup
P''$ is a $C_1[q_1, q_n]$-Tutte subgraph and $(S'-S_0') \cup S''$ is an
SDR of the $P' \cup P''$-bridges with $q_n \notin (S'-S_0') \cup S''$.

 Finally, we use the bridges with tip $t_1$ to find vertex-disjoint
$q_n u$- and $t_1q_1$-paths, and a corresponding SDR.
 Let $F$ be the union of $C_1[q_n, q_1]$ and every $(H \cup C_1)$-bridge
all of whose attachments on $C_1$ belong to $C_1[q_n, q_1]$.  
 Then $F$ contains $K_{t_1}$, and possibly some additional type (i)
bridges; $F$ also contains~$u$.
 By Lemma \ref{thm:ksconnected}\ref{ksd}, $F$ is $(3, C_1[q_1,q_n] \cup
\{t_1\})$-connected.
 So, by Lemma \ref{thm:chaindelete4}\ref{cdb}, $F - t_1$ is a plane
chain of circuit blocks $(b_0 = q_n, B_1, b_1, B_2, \dots, b_{m-1}, B_m,
b_m = q_1)$ with $m \ge 1$.
 Let $Y$ be the $q_nq_1$-path $X_{F-t_1}[q_n,q_1]$.

 Set $z$, $1 \leq z \leq m$, so that $u \in V(B_z) - \{b_z\}$ (recall
that $u \ne q_1$).  Let $w$ be the first vertex in $Y[b_{z-1}, q_1]$,
other than $b_{z-1}$, that is a neighbor of $t_1$ in $F$.  Since
$B_m-b_{m-1}$ has such a vertex, $w$ is defined.  Let $B_r$ be the block
such that $w \in V(B_r) - \{b_{r-1}\}$.  Then $z \leq r \leq m$.

 Let
 $H_1 = B_1 \cup B_2 \cup \cdots \cup B_{z-1}$,
 $H_2 = B_z \cup B_{z+1} \cup \cdots \cup B_r$, and
 $H_3 = B_{r+1} \cup B_{r+2} \cup \cdots \cup B_m$.
 Apply SDR SP1 to $H_1$ to find an $X_{H_1}$-Tutte $q_n b_{z-1}$-path
$Q_1$ in $H_1$ and an SDR $T_1$ of the $Q_1$-bridges with $b_{z-1}
\notin T_1$.
 Similarly, apply SDR SP1 to $H_3$ to find an $X_{H_3}$-Tutte $b_r
q_1$-path $Q_3$ in $H_3$ and an SDR $T_3$ of the $Q_3$-bridges with $q_1
\notin T_3$.

 Let $H_2' = H_2 \cup b_{z-1} w$, and $Z_2' =
X_{H_2'} = b_{z-1}w \cup Y[w, b_r] \cup C_1[b_{z-1}, b_r]$.
 By choice of $r$, $Z_2'$ is a cycle.
 Now $A_G(H_2)\subseteq V(Z_2')$, so by Lemma
\ref{thm:ksconnected}\ref{ksa}, \ref{ksb}, and \ref{ksd}, $H_2$ and
hence $H_2'$ are $(3, V(Z_2'))$-connected.
 Thus, $(H_2', Z_2')$ is a circuit graph.
 Apply Corollary \ref{thm:gryedge} to $H_2'$ to find a $Z_2'$-Tutte
$u b_r$-path $Q_2'$ through $b_{z-1}w$ and an SDR $T_2$ of the
$Q_2'$-bridges with $b_r \notin T_2$.
 Let $Q_2 = Q_2'-b_{z-1}w$ and $Z_2 = Z_2' - b_{z-1}w$, then $Q_2$ is
$Z_2$-Tutte, consisting of vertex-disjoint $b_{z-1} u$- and $w
b_r$-paths, and $T_2$ is an SDR of the $Q_2$-bridges in $H_2$.

 If any $Q_2$-bridge $B$ in $H_2$ has three attachments and contains any
edges of $X_{H_2}$, then these edges are in $Y[b_{z-1}, w]$.  By choice
of
$w$, no internal vertex of $B$ is a neighbor of $t_1$ in $F$.  Thus $B$
is a $Q_2$-bridge in $F$, still with only three attachments.
 Applying this and Remark \ref{thm:2att3att}, $Q = Q_1 \cup Q_2 \cup t_1
w \cup Q_3$ is $C_1[q_n,q_1]$-Tutte in $F$, consisting of
vertex-disjoint $q_n u$- and $t_1 q_1$-paths, with an SDR $T = T_1 \cup
T_2 \cup T_3$ of the $Q$-bridges such that $t_1, q_1 \notin T$.


Let $P = P' \cup P'' \cup Q$ and $S = (S'-S_0') \cup S'' \cup T$.  Then
$P$ is a $C_1$-Tutte $uv$-path in $I(C_k)$, and $S$ is an SDR of the
$P$-bridges.
 We must show that $P$ is a forward $uv$-path.  Suppose that $u,x,y,v$
are vertices in that order along $P$, with $x \in V(C_{i+2})$ for some
$i \in \{1, 2, \dots, k-2\}$.  Now $H \cap (P - V(P'))$ is contained in
the union of $P'$-bridges of $H$ that contain an edge of $C_2$, and
these bridges are vertex-disjoint from $C_{i+2}$, so $x \in V(P')$.  But
then $y \in V(P')$, so $y \notin V(C_i)$ because $P'$ is forward and
$V(P') \cap V(C_1) = \emptyset$.  Thus $P$ is a forward $uv$-path.
 \end{proof}

 Now let $G$ be a $2$-indivisible plane graph with a ladder net $N =
(C_1, C_2, C_3, \dots)$.  Suppose that $V(C_1) \ne V(D_1)$,
so that we may choose a subpath $C_0 = D_0$ of $C_1 - V(D_1)$.   For
each $i \geq 0$, suppose the path $D_i$ has ends $x_i$ and $y_i$,
 where $\dots, x_2, x_1, x_0, y_0, y_1, y_2, \dots$ occur
along $\partial N$ in this order.
 By construction, all of these vertices are distinct, except we might
have $x_0 = y_0$.  Set $I(C_0) = C_0 = D_0$.  See
Figure~\ref{fig:laddernet}.  Let $r$ and $s$ be integers such that $0
\leq r \leq s$.  The \emph{$(r, s)$-truncation of $G$ relative to $N$}
is the graph $G_{r, s}$ obtained from $I(C_s)$ by deleting the vertices
of $I(C_r) - V(D_r)$.  The following lemma is similar to Claim 2 in the
proof of \cite[(3.6)]{bib:yu3}, but we assume that $G$ is $(3, \partial
N)$-connected, and find an SDR of the $P$-bridges.

\begin{figure}[h]
\centering
\def\svgwidth{6in}
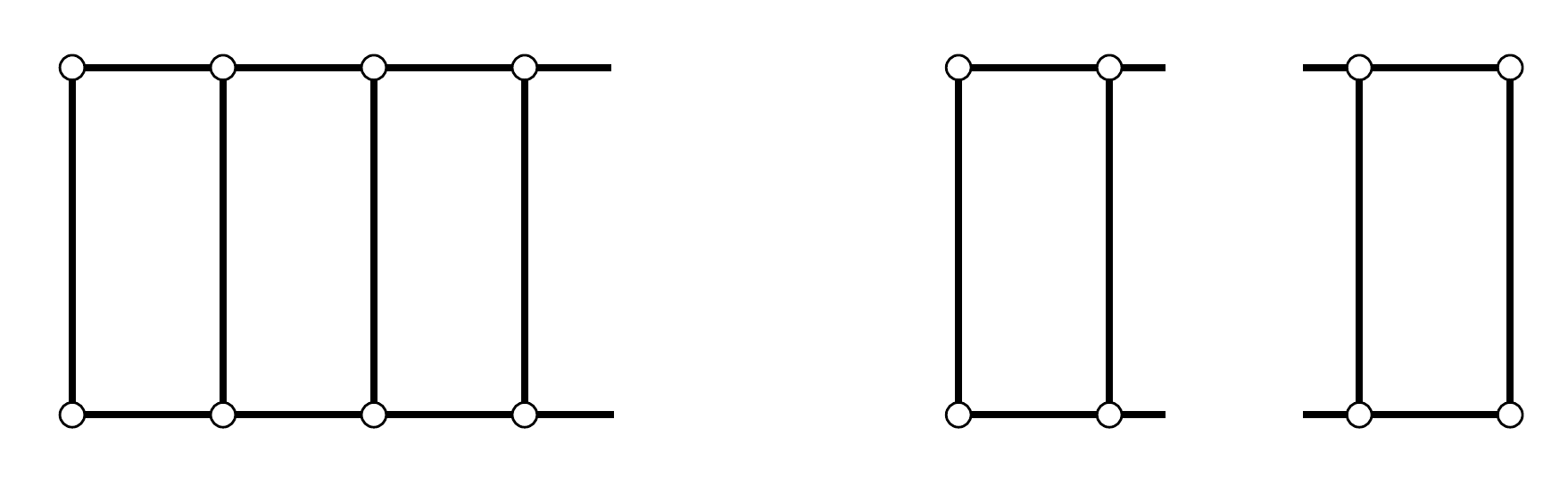
\caption{Ladder net notation --- the second graph is $G_{r, s}$}\label{fig:laddernet}
\end{figure}

 \begin{theorem}\label{thm:extendladdernet}
 Let $G$ be a $2$-indivisible plane graph with a ladder net $N =
(C_1, C_2, C_3, \dots)$ such that $G$ is $(3, \partial N)$-connected.
 Let $\dots, x_2, x_1, x_0, y_0, y_1, y_2, \dots$ be as above, and
assume that $N$ is tight with respect to $D_0 = \partial
N[x_0, y_0]$.
 Suppose that $0 \leq r \leq s$.
 Then there exist an $X_{G_{r, s}}[x_s, y_s]$-Tutte path $P$ in $G_{r,
s}$ from $x_r$ to $y_s$ if $s - r$ is even (to $x_s$ if $s - r$ is odd)
such that $\{x_r, x_{r+1}, \dots, x_s, y_r, y_{r+1}, \dots, y_s\}
\subseteq V(P)$ and $P$ is a forward path in $G$, and an SDR $S$ of the
$P$-bridges in $G_{r, s}$ such that $y_r \notin S$.  Symmetrically,
there is also such a path from $y_r$ to $x_s$ ($y_s$) and a
corresponding SDR of its bridges.
 \end{theorem}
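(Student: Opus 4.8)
The plan is to induct on the width $s-r$ of the truncation, peeling off at each step the innermost cell $E_r$ (the part of $G_{r,s}$ lying between the rungs $D_r$ and $D_{r+1}$) and invoking the statement of the theorem itself with parameters $(r+1,s)$ in place of $(r,s)$. This mirrors the proof of Theorem \ref{thm:extendradialnet}, with the rung $D_{r+1}$ now playing the role that the inner cycle $C_2$ played there: tightness (condition 3 of the definition of a tight net, and tightness with respect to $D_0$ when $r=0$) guarantees that every bridge lying in $E_r$ has at most one attachment, its \emph{tip}, on $D_{r+1}$, exactly as each $(H\cup C_1)$-bridge had a single tip on $C_2$. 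The outer walk of $G_{r,s}$ is the ``rectangle'' $D_r\cup\partial N[y_r,y_s]\cup D_s\cup\partial N[x_s,x_r]$, and the protected arc $X_{G_{r,s}}[x_s,y_s]$ is the far rung $D_s$; since $E_r$ is disjoint from $D_s$, the far rung will stay protected throughout. For the base case $r=s$ the graph $G_{s,s}$ is just the path $D_s$, and as $s-r=0$ is even, $P=D_s$ with $S=\emptyset$ is the required $x_r$-to-$y_s$ path, its reverse giving the symmetric statement.

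For the inductive step I would first apply the \emph{symmetric} form of the inductive hypothesis with parameters $(r+1,s)$, obtaining a forward $X_{G_{r+1,s}}[x_s,y_s]$-Tutte path $P'$ in $G_{r+1,s}$ that \emph{starts} at $y_{r+1}$ and ends at $y_s$ (if $s-r$ even) or $x_s$ (if $s-r$ odd), together with an SDR $S'$ of its bridges avoiding $x_{r+1}$. The choice to start $P'$ at $y_{r+1}$ rather than $x_{r+1}$ comes from a parity computation for the intended ``snake'' $x_r,y_r,y_{r+1},x_{r+1},x_{r+2},y_{r+2},\dots$ through the ladder: after sweeping the rung $D_r$ the snake should enter $G_{r+1,s}$ at $y_{r+1}$, and one checks that the symmetric hypothesis for $(r+1,s)$ then delivers precisely the endpoint demanded for $(r,s)$, because $s-(r+1)$ has opposite parity to $s-r$. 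Note that $V(E_r)\cap V(G_{r+1,s})=V(D_{r+1})$, so the Jigsaw Principle (Lemma \ref{thm:jigsaw}) will require that the cell path and $P'$ meet in exactly $V(D_{r+1})$.

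Next I would build a path through $E_r$ from $x_r$ to $y_{r+1}$ that sweeps up the rung $D_r$, collecting $x_r$ and $y_r$, and absorbs all the cell bridges. Following Theorem \ref{thm:extendradialnet}, I classify the cell bridges according to whether their tip lies on $P'$ or inside a $P'$-bridge of $G_{r+1,s}$ (which has exactly two attachments on $D_{r+1}$), group each family together with the corresponding segment of the cell's outer boundary $D_r\cup\partial N[x_r,x_{r+1}]\cup\partial N[y_r,y_{r+1}]$, verify the relevant $(3,\cdot)$-connectivity with Lemma \ref{thm:ksconnected}\ref{ksd} and Lemma \ref{thm:chaindelete4}, and apply SDR SP1, SP2, and SP3 to obtain Tutte subgraphs of these groups with pairwise-disjoint SDRs. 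A boundary-region argument near $x_r$, analogous to the $F$-construction and the use of Corollary \ref{thm:gryedge} in Theorem \ref{thm:extendradialnet}, is needed to force the cell path to begin at $x_r$ and turn correctly toward $y_{r+1}$. The Jigsaw Principle then assembles the cell path with $P'$ into a single $D_s$-Tutte path $P$ with combined SDR $S$ avoiding $y_r$. Finally I would check the forward property: the new vertices lie in $C_r\cup C_{r+1}$ and occur at the very start of $P$, while $P'$ is forward and lies beyond $D_{r+1}$, so no forbidden reversal across three consecutive cycles can arise; the symmetric statement follows by interchanging the roles of the $x_i$ and the $y_i$.

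The hard part, as flagged in Section \ref{section:stdpieces}, is not constructing the individual Tutte subgraphs but maintaining a \emph{single} SDR for the bridges of their union. When $D_{r+1}$ and the various cut vertices are reinserted, some bridges acquire a new third attachment, and one must argue (via Remark \ref{thm:2att3att}) that each such bridge already had only two attachments and already owns a representative, so that reassigning representatives keeps them pairwise distinct and keeps every $P$-bridge within three attachments; the condition $x_{r+1}\notin S'$ is what lets the cell SDR and $S'$ stay disjoint at the junction $D_{r+1}$. Getting the parity, the endpoint, the exact intersection $V(E_r)\cap V(G_{r+1,s})=V(D_{r+1})$, and this global SDR all to cohere simultaneously --- including degenerate possibilities such as $x_0=y_0$ or empty boundary segments --- is the principal technical burden of the proof.
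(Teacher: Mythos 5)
Your proposal follows essentially the same route as the paper's proof: induction on $s-r$, applying the symmetric inductive hypothesis to $G_{r+1,s}$ to obtain a forward Tutte path starting at $y_{r+1}$ with SDR avoiding $x_{r+1}$, classifying the cell bridges by their tips on $D_{r+1}$ via tightness, assembling SDR SP1/SP2/SP3 pieces along $D_r$ with the Jigsaw Principle and Remark \ref{thm:2att3att}, and verifying forwardness as in Theorem \ref{thm:extendradialnet}. The only cosmetic deviation is at the two ends of the cell, where the paper handles $K_{t_1}$ and $K_{t_n}$ more simply than you suggest --- via plane chains of circuit blocks (SDR SP1 on a subchain near $x_r$, and Theorem \ref{thm:gry} block-by-block through $y_r$ near $y_{r+1}$) rather than an $F$-style construction with Corollary \ref{thm:gryedge}, which is needed only in the radial case.
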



 \begin{proof}
 The proof is by induction on $s - r$.  If $s = r$, then $P = D_s$ and
$S = \emptyset$ suffice.

 So assume that $r < s$.  Let $H=G_{r,s}$ and $H' = G_{r+1,s}$.
 By induction, there is an $X_{H'}[x_s, y_s]$-Tutte path $P'$ in $H'$
from $y_{r+1}$ to $y_s$ if $s - r$ is even ($x_s$ if $s - r$ is odd)
such that $\{x_{r+1}, x_{r+2}, \dots, x_s, y_{r+1}, y_{r+2}, \dots,
y_s\} \allowbreak \subseteq V(P')$ and $P'$ is a forward path in $G$
from $y_{r+1}$ to $y_s$ ($x_s$), and an SDR $S'$ of the $P'$-bridges
such that $x_{r+1} \notin S'$.
 By Lemma \ref{thm:ksconnected}\ref{ksd}, since $G$ is $(3, \partial
N)$-connected, $H$ is $(3, X_H)$-connected.  Since $X_H$ is a cycle,
$(H, X_H)$ is a circuit graph.

Since $N$ is tight, every $(H' \cup D_r)$-bridge in $G_{r, s}$ has at
most one attachment in $D_{r+1}$.  Let $t_1, t_2, \dots, t_n \in
V(D_{r+1})$ be all of the tips of $(H' \cup D_r)$-bridges in $G_{r, s}$,
listed in order in $X_{H'}[x_{r+1}, y_{r+1}]$.  Then $n \geq 2$, $t_1 =
x_{r+1}$, and $t_n = y_{r+1}$.
 So we may proceed similarly to the proof of Theorem
\ref{thm:extendradialnet}.  Define subpaths $P(t_i)$ of $D_r$ with ends
$p_i$ and $q_i$,
 so that $p_1 = x_r, q_1, p_2, q_2, \dots, p_n, q_n = y_r$ occur in this
order along $D_r$.
 Collect together subgraphs $K_{t_i}$ ($i \ne 1, n$), $K_D$, and $K_i$,
and use the SDR standard pieces to to find a $q_1 p_n$-path $P''$ such
that $P' \cup P''$ is a $D_r[q_1, p_n]$-Tutte subgraph and an SDR $S''$
of the $(P' \cup P'')$-bridges such that $p_n \notin S''$ (for each
piece, we ensure that the vertex in $D_r$ closest to $q_n$ is not in its
SDR).
 We must still deal with $K_{t_1}$ and $K_{t_n}$.

 Consider $K_{t_1}$.  Let $v$ be the clockwise neighbor of $t_1=x_{r+1}$ in
$X_H$.
 By Lemmas \ref{thm:ksconnected}\ref{ksd} and
\ref{thm:chaindelete4}\ref{cdb}, $K_{t_1} - t_1$ is a plane chain of
circuit blocks
 $A' = (a_0=v, A_1, a_1, A_2, \dots, a_{m-1}, A_m, a_m=q_1)$ with $m \ge
0$ along $X_H[v,q_1]$.
 Let $A=(a_\al, A_{\al+1}, a_{\al+1}, \dots, a_m)$ be the subchain of
$A'$ along $X_H[x_r,q_1]$ (with $a_\al$ as in $A'$).
 By SDR SP1 there is an $X_A$-Tutte $x_rq_1$-path $P_1$ through $a_\al$
in $A$ and an SDR $S'_1$ of the $P_1$-bridges with $q_1 \notin
S'_1$.
 If $\al > 0$ there is a nontrivial $(P_1 \cup \{t_1\})$-bridge
containing $t_1v$ with attachments $t_1$ and $a_\al$, so take
$t_1=x_{r+1}$ as its representative and set $S_1 = S'_1 \cup
\{x_{r+1}\}$; otherwise set $S_1 = S'_1$.  
 By Remark \ref{thm:2att3att}, $P_1 \cup \{t_1\}$ is an
$X_H[x_{r+1},q_1]$-Tutte subgraph of $K_{t_1}$; $S_1$ is an
SDR of the $(P_1 \cup \{t_1\})$-bridges with $q_1 \notin S_1$.


 Finally, consider $K_{t_n}$.   Let $w$ be the counterclockwise neighbor
of $t_n=y_{r+1}$ in $X_H$.
 By Lemmas \ref{thm:ksconnected}\ref{ksd} and
\ref{thm:chaindelete4}\ref{cdb}, $K_{t_n} - t_n$ is a plane chain of
circuit blocks $(b_0 = p_n, B_1, b_1, B_2, \dots, \allowbreak b_{z-1},
\allowbreak B_z, \allowbreak b_z = w)$ with $z \ge 0$.
 If $z = 0$, then $p_n=y_r=w$; let $P_n = y_r t_n$ and $S_n =
\emptyset$.
 If $z > 0$ let $B_\be$ be any block with $y_r \in V(B_\be)$.
 For each $j$, apply Theorem \ref{thm:gry} to find an $X_{B_j}$-Tutte
$b_{j-1} b_j$-path $R_j$ in $B_j$, through $y_r$ if $j = \be$, and an
SDR $U_j$ of the $R_j$-bridges such that $b_j \notin U_j$ if $j < \be$,
$y_r \notin U_j$ if $j=\be$, and $b_{j-1} \notin U_j$ if $j > \be$.
 Set $P_n = \left( \bigcup_{j=1}^z R_j \right) \cup wt_n$ and
$S_n = \bigcup_{j=1}^z U_j$.
 In either case, $P_n$ is an $X_H[p_n, y_{r+1}]$-Tutte
$p_n y_{r+1}$-path in $K_{t_n}$ (using Remark \ref{thm:2att3att} if
$z > 0$) and $S_n$ is an SDR of the $P_n$-bridges
with $y_r, y_{r+1} \notin S_n$ (but $y_{r+1}$ may already be in $S'$).

 Let $P = P' \cup P'' \cup P_1 \cup P_n$, and $S = S' \cup S'' \cup S_1
\cup S_n$.
 Then $P$ is an $X_H[x_s, y_s]$-Tutte path in $H=G_{r, s}$ from
$x_r$ to $y_s$ ($x_s$) such that $\{x_r, x_{r+1}, \dots, x_s, y_r,
y_{r+1}, \dots, y_s\} \subseteq V(P)$, and $S$ is an SDR of the
$P$-bridges such that $y_r \notin S$.  By an argument similar to the one
in Theorem \ref{thm:extendradialnet}, $P$ is a forward path from $x_r$
to $y_s$ ($x_s$).
 \end{proof}

 We will build our $1$-way infinite Tutte path from a sequence of finite
forward Tutte paths.  The following lemma will help to compare bridges
of different subgraphs in different graphs.
 \altone{%
 We omit the proof; the idea is that $R_1$ and $R_2$, and
$G_1$ and $G_2$, intersect $V(J)$, and the set of edges incident with a
vertex of $J$, in the same way.%
 } 

 \begin{lemma}\label{thm:forwardbridges}
 Let $G$ be a $2$-indivisible plane graph with a net $N = (C_1, C_2,
C_3, \dots)$.
 Let $i$ be a positive integer
 and $w \in V(D_{i+2})$.
 For $k = 1,2$ suppose that
 $I(C_{i+1}) \subseteq G_k \subseteq G$, and that
 $R_k \subseteq G_k$ is a forward path in $G$ from $u$ through $w$.
 Suppose that $R_1[u, w] = R_2[u, w]$ and $J \subseteq I(C_i)$.
 Then $J$ is an $R_1$-bridge in $G_1$ if and only if it is an
$R_2$-bridge in $G_2$, and $J$ has the same attachments in both cases.
 \end{lemma}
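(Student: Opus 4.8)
The plan is to show that the two pairs $(R_1,G_1)$ and $(R_2,G_2)$ are indistinguishable in a neighbourhood of $I(C_i)$: they meet $V(I(C_i))$ in the same set of path-vertices, and every vertex of $V(I(C_i))$ has the same incident edges in $G_1$ as in $G_2$. Since $J\subseteq I(C_i)$, whether $J$ is a bridge, and what its attachments are, depends only on this local data, so these two agreements will immediately give the equivalence. Thus there are two things to establish: that $R_1$ and $R_2$ agree on $I(C_i)$, and that $G_1$ and $G_2$ agree on the edges incident with $V(I(C_i))$.

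First I would \emph{localize the paths} using the forward property. The key point is that $w\in V(C_{i+2})\setminus V(C_{i+1})$: in a radial net the cycles $C_j$ are pairwise disjoint and $D_{i+2}=C_{i+2}$, while in a ladder net the interior of $D_{i+2}$ misses $\partial N$, and its two ends are the endpoints of $C_{i+2}\cap C_{i+3}$, which by the ladder-net axioms lie strictly outside the subpath $C_{i+1}\cap C_{i+2}$; since $V(C_{i+1})\cap V(C_{i+2})=V(C_{i+1}\cap C_{i+2})$, either way $w\notin V(C_{i+1})$. Taking $x=w$ in the definition of a forward path then shows that no vertex of $R_k$ strictly after $w$ lies on $C_i$. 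Because $G$ is nicely embedded, a path cannot pass from outside the closed disk bounded by $C_i$ to a vertex of that disk without using a vertex of $C_i$; as $w$ lies outside this disk, the part of $R_k$ after $w$ never re-enters $I(C_i)$. Hence $R_k\cap I(C_i)\subseteq R_k[u,w]$, and since $R_1[u,w]=R_2[u,w]$ the two paths contain exactly the same vertices of $V(I(C_i))$ and the same edges with both ends in $I(C_i)$.

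Second, $G_1$ and $G_2$ agree near $I(C_i)$ because $I(C_{i+1})\subseteq G_k\subseteq G$. Using nice embedding and planarity, every edge of $G$ incident with a vertex $p\in V(I(C_i))$ lies inside the closed disk bounded by $C_{i+1}$, hence in $I(C_{i+1})$: an edge leaving a vertex of $C_i$ cannot cross $C_{i+1}$, and edges at vertices strictly inside $C_i$ stay within $I(C_i)\subseteq I(C_{i+1})$. Since $I(C_{i+1})\subseteq G_1\cap G_2$, each $p\in V(I(C_i))$ has the same incident edges, and hence the same neighbours, in $G$, in $G_1$, and in $G_2$.

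Finally I would combine these. Since $J\subseteq I(C_i)$, all vertices of $J$ lie in $V(I(C_i))$ and all edges of $J$ lie in $I(C_i)\subseteq G_1\cap G_2$. By the path-agreement, $V(J)\cap V(R_1)=V(J)\cap V(R_2)$ and $V(J)\setminus V(R_1)=V(J)\setminus V(R_2)$, so the candidate attachments and internal vertices are the same on both sides; a trivial bridge (a single chord with both ends on the path) is then an $R_1$-bridge exactly when it is an $R_2$-bridge. For nontrivial $J$, its internal vertices lie off both paths, and because $J$ is a bridge contained in $I(C_i)$, \emph{all} edges of $G_k$ incident with an internal vertex already belong to $J$ (within $J$ or to an attachment), so no internal vertex reaches a vertex outside $I(C_i)$; by the graph-agreement these incident-edge sets are identical in $G_1$ and $G_2$. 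Consequently $V(J)\setminus V(R_k)$ is a full component of $G_k-V(R_k)$ for one $k$ exactly when it is for the other, with the same boundary edges and hence the same attachments $V(J)\cap V(R_k)$. The hard part will be the localization step: one must rule out that the tail of $R_k$ after $w$ slips back into $I(C_i)$ (the one delicate point being the path's endpoint, which in the intended application lies on an outer cycle and so outside $I(C_i)$), and it is precisely the forward property together with $w\in V(C_{i+2})\setminus V(C_{i+1})$ and nice embedding that prevents this.
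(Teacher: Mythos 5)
Your proposal is correct and takes essentially the same route as the paper's proof, which reduces bridgehood of $J$ to conditions that mention only $R_k[u,w]$ (since forwardness means the tail of $R_k$ after $w$ avoids $I(C_i)$, hence avoids $J$) and only $I(C_{i+1})$ (since every edge of $G$ incident with a vertex of $J$ lies in $I(C_{i+1})\subseteq G_1\cap G_2$), after which the conditions are identical for $k=1,2$. Your write-up in fact supplies details the paper leaves implicit, notably the verification from the ladder-net axioms that $w\in V(C_{i+2})-V(C_{i+1})$, so that the forward-path definition applies with $x=w$.
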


 \alttwo{%
 \begin{proof}
 For $J$ to be a bridge, it must be connected and have at least two
vertices.  Given this, $J$ is an $R_k$-bridge in $G_k$ with attachment
set $S$ if and only if (i) $E(J) \subseteq E(G_k) - E(R_k)$,
 (ii) $A_{G_k}(J) \subseteq V(R_k)$,
 (iii) $J-V(R_k)$ is connected or empty, and (iv) $V(J) \cap V(R_k) = S$.
 Since $R_k$ is forward, after $w$ it contains no vertex of $I(C_i)$ and
hence no vertex or edge of $J$, so we may replace $R_k$ in (i)--(iv)
with $R_k[u,w]$.
 Since every edge of $G$ incident with a vertex of $J$ belongs to
$I(C_{i+1})$ and $I(C_{i+1}) \subseteq G_k \subseteq G$, we may replace
$G_k$ in (i)--(iv) with $I(C_{i+1})$.
 But then the conditions are identical for $k = 1$ or $2$.
 \end{proof}

 } 

 We are now ready to find a $1$-way infinite Tutte path and an SDR of
its bridges in every $3$-connected $2$-indivisible infinite plane graph.
 We actually prove another similar result first, for graphs that have a
net but satisfy a somewhat weaker connectivity condition.
 The proof of this theorem is a variation of K\H{o}nig's Lemma and is
similar to \cite[(3.7) and (3.8)]{bib:deanthomasyu},
\cite[(3.5)]{bib:yu3}, and \cite[Lemma 5.2]{bib:yu4}.



 \begin{theorem}\label{thm:onewaypathlocfin}
 Let $G$ be a $2$-indivisible infinite plane graph with a net $N=(C_1,
C_2, C_3, \dots)$.
 If $N$ is a radial net, let $\De = C_1$ and $u = v \in V(C_1)$.
 If $N$ is a ladder net, let $\De = \partial N$ and $u, v \in \partial
N$.
 Assume that $G$ is $(3, \De)$-connected.
 Then there exist a $1$-way infinite $\De$-Tutte path $P$ in $G$ from
$u$ through $v$ and an SDR $S$ of the $P$-bridges.
 \end{theorem}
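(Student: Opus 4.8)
The plan is to construct the infinite Tutte path $P$ as a limit of the finite forward Tutte paths provided by Theorem \ref{thm:extendradialnet} (radial case) and Theorem \ref{thm:extendladdernet} (ladder case), using a K\H{o}nig's Lemma argument. First I would apply the appropriate extension theorem to each truncation $I(C_k)$ (or $G_{0,s}$ in the ladder case) to obtain, for each $k$, a finite forward $\De$-Tutte path $P_k$ from $u$ (through $v$) together with an SDR $S_k$ of its bridges. The forwardness property is the crucial feature: it guarantees that once such a path leaves the region $I(C_i)$ it never returns deep inside, so the initial segments of the $P_k$ stabilize as $k$ grows. Concretely, for each fixed $i$ the restriction $P_k \cap I(C_i)$ together with the ``entry data'' at $D_{i+1}$ or $D_{i+2}$ takes only finitely many values (because $I(C_i)$ is finite and the nets force local finiteness), so by a diagonal/compactness argument one extracts a nested subsequence along which these restrictions agree. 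Their union is the desired $1$-way infinite path $P$.

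Next I would verify that $P$ is a $\De$-Tutte path. The key tool here is Lemma \ref{thm:forwardbridges}: because each $P_k$ is forward and agrees with $P$ on a long initial segment reaching past $C_{i+2}$, any potential $P$-bridge $J$ contained in $I(C_i)$ is a bridge of $P$ in $G$ exactly when it is a $P_k$-bridge in $I(C_k)$ for the corresponding $k$, \emph{with the same attachments}. Since each $P_k$ is a $\De$-Tutte path, every such $J$ has at most three attachments, and at most two if it meets an edge of $\De$; every $P$-bridge lies inside some $I(C_i)$ because the net exhausts $G$, so the Tutte conditions transfer to $P$. I would also confirm $P$ passes through $u$ and $v$, which is immediate since these lie in the stabilized initial portion.

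The most delicate step, and the one I expect to be the main obstacle, is making the SDRs $S_k$ converge to a single SDR $S$ for the $P$-bridges. An SDR is an \emph{injective} assignment of representatives, and naively taking a subsequential limit of the $S_k$ risks two failures: a given $P$-bridge might be assigned different representatives by different $P_k$, and distinct bridges might collide on the same representative in the limit. To handle this I would, at the compactness stage, refine the selection so that along the chosen subsequence not only do the path-restrictions $P_k \cap I(C_i)$ agree but also the \emph{representative assignments} of those bridges lying in $I(C_i)$ agree (this is again a finiteness-of-choices argument, since each finite region admits only finitely many SDR assignments of its bridges). Every $P$-bridge lives in some $I(C_i)$ and so receives a well-defined eventual representative; injectivity in the limit follows because on each finite region the $S_k$ are injective and the representatives of bridges in $I(C_i)$ are themselves confined to $I(C_{i+1})$, preventing far-apart bridges from competing. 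Assembling these consistent choices yields the global SDR $S$, completing the proof.
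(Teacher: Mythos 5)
Your overall architecture is the same as the paper's: finite forward Tutte paths $Q_n$ from Theorems \ref{thm:extendradialnet} and \ref{thm:extendladdernet}, a K\H{o}nig-type extraction that stabilizes both the initial path segments and (in a second refinement, exactly as the paper does with its ``assignment functions'') the representative assignments on each finite region, and Lemma \ref{thm:forwardbridges} to transfer bridges and their attachments from the finite paths to the limit. But there is one genuine gap. You assert that ``every $P$-bridge lies inside some $I(C_i)$ because the net exhausts $G$.'' That does not follow: exhaustion only says every vertex eventually lies in some $I(C_i)$; a $P$-bridge could a priori be \emph{infinite}, meeting every $I(C_i)$ without being contained in any of them. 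The paper closes this with a separate argument you omit: if a $P$-bridge $J$ were infinite, then $J - V(P)$ would meet infinitely many $D_i$ and hence contain a path $M$ from $D_{i-3}$ to $D_i$ inside $I(C_i)$; choosing $n \in A_i$, the path $M$ avoids $Q_n$ (since $P$ and $Q_n$ agree on $I(C_i)$), so $M$ lies in a $Q_n$-bridge with at least one attachment in each of $D_{i-3}$, $D_{i-2}$, $D_{i-1}$, $D_i$ --- four attachments, contradicting that $Q_n$ is a Tutte path. Without finiteness of bridges, both your verification that $P$ is $\De$-Tutte and the completeness of the limiting SDR (every nontrivial $P$-bridge must actually appear in some $\sB_i$ and receive a representative) collapse; your injectivity argument, by contrast, is fine, since a nested union of compatible injections is injective.

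A second, smaller omission: Theorems \ref{thm:extendradialnet} and \ref{thm:extendladdernet} require \emph{tight} nets (in the ladder case, tight with respect to $D_0 = \partial N[u,v]$ with $u, v \in V(C_1) - V(D_1)$), whereas the theorem's hypothesis supplies only a net. You must first reduce to a tight net via Lemma \ref{thm:tightradialnet} or \ref{thm:tightladdernet}; those lemmas require $G$ to be $2$-connected, which the paper deduces from Lemma \ref{thm:ksconnected}\ref{kse}, and the local finiteness you invoke also needs the (one-line) justification that each vertex's closed neighborhood lies in some finite $I(C_n)$. These are cited, existing tools rather than new ideas, but as written your proof applies the extension theorems to a net that may not satisfy their hypotheses.
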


 \begin{proof}
 In either case, $G$ is locally finite since the neighbors of any
vertex belong to a finite graph $I(C_n)$ for large enough $n$. 
 Also, $G$ is $2$-connected by Lemma \ref{thm:ksconnected}\ref{kse},
taking $H$ to be $C_1$ when $N$ is a radial net, and the $2$-connected
subgraph $\bigcup_{i=1}^\infty C_i$ when $N$ is a ladder net.
 Therefore, by Lemma \ref{thm:tightradialnet} or
\ref{thm:tightladdernet}, we may assume that $N$ is a tight net, and
that if $N$ is a ladder net then $u, v \in V(C_1) - V(D_1)$ and $N$ is a
tight ladder net with respect to $D_0 = \partial N[u, v]$.

 Throughout this paper, a system of distinct representatives has been
represented by a set of vertices.  For our infinite limiting argument in
this proof, we must also keep track of the specific
assignment of vertices as representatives of different bridges.
 If $P$ is a subgraph of $G$, $\mathcal B$ is the set of nontrivial
$P$-bridges, and $\si: \mathcal B \rightarrow V(G)$ is an injection
such that, for every $B \in \mathcal B$, $\si(B) \in V(B) \cap V(P)$,
then we say that $\si$ is an \emph{assignment function} for $\mathcal
B$.  The range of $\si$ in $V(G)$ is an SDR $S$ of the $P$-bridges in
$\mathcal B$, and the existence of an SDR $S$ guarantees the existence
of $\si$.
 If we restrict the codomain of $\si$ to $S$, then $\si$ is a
bijection.

 If $N$ is a radial net, then by Theorem \ref{thm:extendradialnet} for
$n = 1, 2, 3, \dots$ we can find a forward $C_1$-Tutte path $Q_n$ in
$I(C_n)$ from $u$ to some vertex in $D_n$ (in this case, $D_n = C_n$)
and an SDR of the $Q_n$-bridges in $I(C_n)$.
 If $N$ is a ladder net, then by Theorem \ref{thm:extendladdernet} for
$n = 1, 2, 3, \dots$ we can find a forward $(\partial N \cap C_n)$-Tutte
path $Q_n$ in $G_{0,n}=I(C_n)$ from $u$ through $v$ to some vertex in
$D_n$ and an SDR of the $Q_n$-bridges in $I(C_n)$.
 In either case, each $Q_n$ is a forward $(\De \cap I(C_n))$-Tutte path
in $I(C_n)$ from $u$ through $v$.
 Let $\mathcal T_n$ be the collection of nontrivial $Q_n$-bridges in
$I(C_n)$, and let $\alpha_n: \mathcal T_n \rightarrow V(G)$ be an
assignment function for $\mathcal T_n$.

 We now construct an infinite sequence of paths and assignment functions
in $G$, which will converge to the path $P$ and to an assignment
function giving an SDR of the $P$-bridges, respectively.
 For subgraphs $H, K$ of a graph $F$, let $\ntbr(H, F, K)$ denote the set of nontrivial
$H$-bridges in $F$ that are subgraphs of $K$.

 Let $A_0 = \{1, 2, 3, \ldots\}$.
 Suppose $i \ge 1$ and we have an infinite set $A_{i-1}$ of positive
integers.
 Let $P_i$ be a path in $I(C_{i+2})$ from $u$ to a vertex of $D_{i+2}$
such that, for an infinite set $A_i' \subseteq A_{i-1}$ of values of
$n$, $Q_n$ has $P_i$ as an initial segment.  Such a $P_i$ exists because
$A_{i-1}$ is infinite, $I(C_{i+2})$ is finite and each $Q_n$, $n \in
A_{i-1}$, uses a vertex of $D_{i+2}$.  (There may be more than one such
path, but we fix just one as $P_i$.)
 Suppose $n \in A_i'$; necessarily $n \ge i+2$.
 Since $Q_n$ is a forward path from $u$ through $v$, $P_i$ is also a
forward path from $u$ through $v$.
 By Lemma \ref{thm:forwardbridges}, if we define $\sB_i = \ntbr(P_i,
I(C_{i+2}), I(C_i))$, then also $\sB_i = \ntbr(Q_n, I(C_n),
I(C_i)) \subseteq \sT_n$.
 Let $\si_i: \sB_i \rightarrow V(G)$ be an assignment
function for $\mathcal B_i$ such that, for an infinite set $A_i
\subseteq A_i'$ of values of $n$, $\alpha_n |_{\sB_i} =
\si_i$.
 Such a $\si_i$ exists because $A_i'$ is infinite and $\sB_i$
is finite.

 Constructing $A_i$ from $A_{i-1}$ in this way gives an infinite
sequence of sets $A_0 = \{1,2,3,\ldots\} \allowbreak \supseteq A_1
\supseteq A_2 \ldots$.  Suppose $i < j$ and choose $n \in A_j \subseteq
A_i$.  Since $P_i$ is an initial segment of $Q_n$ from $u$ to $D_{i+2}$
in $I(C_{i+2})$, and $P_j$ is an initial segment of $Q_n$ from $u$ to
$D_{j+2}$ in $I(C_{j+2})$, we have $P_i \subseteq P_j$.
 Hence $P = \bigcup_{i=1}^\infty P_i$ is a $1$-way infinite path; since
each $P_i$ is a forward path from $u$ through $v$, $P$ is also a
forward path from $u$ through $v$.
 Also, $\sB_i =
\ntbr(Q_n, I(C_n), I(C_i)) \subseteq \ntbr(Q_n, I(C_n), I(C_j)) =
\sB_j$.
 Hence $\sB_1 \subseteq \sB_2 \subseteq \sB_3 \subseteq \ldots \subseteq \sB=
\bigcup_{i=1}^\infty \sB_i$.
 Furthermore, since
 $\si_i = \al_n |_{\sB_i}$ and
 $\si_j = \al_n |_{\sB_j}$,
 $\si_j$ is an extension of $\si_i$.
 Hence we can define a function $\si : \sB \to V(G)$ where
 $\si|_{\sB_i} = \si_i$ for all $i$.
 Since each $\si_i$ is injective, $\si$ is injective, and its range $S$
is an SDR of $\sB$.


 Now we show that each $P$-bridge $J$ in $G$ is finite.
 Suppose not.  Then $J-V(P)$ is infinite, and by planarity it intersects
infinitely many $D_i$.  In particular, for some $i \ge 4$, $J-V(P)$
contains a path $M$ from $D_{i-3}$ to $D_i$; by terminating $M$ at its
first vertex in $D_{i}$, we may assume that $M \subseteq I(C_{i})$. 
 Choose $n \in A_{i}$.
 Then $M \subseteq I(C_{i}) - V(P) = I(C_{i}) - V(P_{i}) =
I(C_{i})-V(Q_n) \subseteq I(C_n)-V(Q_n)$, so
 $M$ is contained in some $Q_n$-bridge $J'$ in $I(C_n)$.
 Since $Q_n$ is a Tutte path in $I(C_n)$, $J'$ has at most three
attachments on $Q_n$.
 But since $M \subseteq J'$, $J'$ must have at least one attachment on
$Q_n$ in each of $D_{i-3}$, $D_{i-2}$, $D_{i-1}$, and $D_{i}$, a
contradiction.

 By Lemma \ref{thm:forwardbridges}, for each $i \ge 1$ we have
$\sB_i=\ntbr(P_i, I(C_{i+2}), I(C_i)) = \ntbr(P, G, I(C_i))$. 
Therefore, if $J \in \sB$ then $J \in \sB_i$ for some $i$ and hence $J$
is a $P$-bridge in $G$.  Conversely, if $J$ is a nontrivial $P$-bridge in
$G$ then, because $J$ is finite, $J \subseteq I(C_i)$ for some $i$, and
hence $J \in \sB_i \subseteq \sB$.  So $\sB$ is precisely the set of
nontrivial $P$-bridges in $G$.


 For each $P$-bridge $J$ in $G$, with $J \subseteq I(C_i)$ and $n \in
A_i$, Lemma \ref{thm:forwardbridges} gives that the number of
attachments of $J$ on $P$ in $G$ is the same as the number of
attachments of $J$ on $Q_n$ in $I(C_n)$, namely at most three, and at
most two if $J$ contains an edge of $\De$.

 We conclude that $P$ is a $\De$-Tutte path in $G$, and $S$ is an
SDR of the $P$-bridges.
 \end{proof}

 \begin{theorem}\label{thm:onewaypath}
 Let $G$ be a $3$-connected $2$-indivisible infinite plane graph, and
$F$ the set of vertices of infinite degree in $G$.  If $F = \emptyset$,
then $G$ has a net $N$; let $u = v \in V(G)$ be arbitrary if $N$ is a
radial net, and let $u, v \in V(\partial N)$ if $N$ is a ladder net.  If
$F = \{f_1\}$, let $u = v = f_1$.  If $F = \{f_1,f_2\}$, let $u=f_1$ and
$v=f_2$.  Then there exist a $1$-way infinite Tutte path $P$ in $G$ from
$u$ through $v$ and an SDR $S$ of the $P$-bridges.
 \end{theorem}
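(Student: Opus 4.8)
The plan is to reduce to the three structural situations set up in Section \ref{section:structural} and, in two of them, to pass from a spanning subgraph $H$ back to $G$. First consider $F=\emptyset$. Then $G$ is locally finite and, being $3$-connected, it is $2$-connected, so by Lemma \ref{thm:locallyfinitenet} it has a net $N$. With $\De=C_1$ for a radial net (choosing $C_1$ to be a facial cycle through the arbitrary vertex $u=v$, as permitted by Lemma \ref{thm:tightradialnet}) or $\De=\partial N$ for a ladder net, we have $u,v\in V(\De)$ and $|V(\De)|\ge 3$, so $G$ is $(3,\De)$-connected by Lemma \ref{thm:ksconnected}\ref{ksc}. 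Hence Theorem \ref{thm:onewaypathlocfin} applies directly and yields a $1$-way infinite $\De$-Tutte path $P$ from $u$ through $v$ with an SDR of its bridges; a $\De$-Tutte path is in particular a Tutte path, so this case is done.

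Now suppose $|F|\ge 1$, and split according to whether $G-F$ has an infinite block. If it does, Lemma \ref{thm:infiniteblock} produces a spanning, $2$-indivisible, $(3,\partial N)$-connected subgraph $H$ carrying a ladder net $N$ with $F\subseteq V(\partial N)$, whose only bridges in $G$ are edges meeting $F$. Taking $\De=\partial N$ and the prescribed $u,v\in F\subseteq V(\partial N)$, I apply Theorem \ref{thm:onewaypathlocfin} to $H$ to obtain a $1$-way infinite $\partial N$-Tutte path $P$ in $H$ from $u$ through $v$ and an SDR $S$ of the $P$-bridges in $H$.

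If $G-F$ has no infinite block, Lemma \ref{thm:infiniteblocks} gives $|F|=2$ and a spanning subgraph $H$ that is a $1$-way infinite plane chain of circuit blocks $(B_1,b_1,B_2,b_2,\dots)$ with $F=\{f_1,f_2\}\subseteq V(X_{B_1})-\{b_1\}$ and only edge-bridges meeting $F$. Here Theorem \ref{thm:onewaypathlocfin} does not apply, so I build $P$ directly, block by block: in $B_1$ I use Theorem \ref{thm:gry} to get an $X_{B_1}$-Tutte $ub_1$-path through $v$ with $u\notin S_1$, and for each $i\ge 2$ an $X_{B_i}$-Tutte $b_{i-1}b_i$-path with $b_{i-1}\notin S_i$ (SDR SP1). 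Concatenating along the cut vertices gives a $1$-way infinite path $P$; because distinct blocks share only a cut vertex and I have excluded each block's starting vertex from its SDR, the $S_i$ are pairwise disjoint, and every $P$-bridge in $H$ lies in a single $B_i$ with at most three attachments. Thus $P$ is a Tutte path in $H$ with SDR $S=\bigcup_i S_i$ (an infinite version of the Jigsaw Principle, Lemma \ref{thm:jigsaw}).

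The crux is transferring $P$ and $S$ from $H$ to $G$. In every case $F\subseteq\{u,v\}\subseteq V(P)$, and since every edge of $E(G)-E(H)$ meets $F$, deleting $V(P)$ also deletes those edges; hence $G-V(P)=H-V(P)$. Therefore the nontrivial $P$-bridges of $G$ and of $H$ have identical internal vertices, and each $G$-bridge is the corresponding $H$-bridge possibly enlarged by some extra edges running to $F$. Extra edges with both ends on $P$ are trivial bridges and are harmless. The one thing to verify is that an extra edge from $f\in F$ to an interior vertex of a nontrivial $H$-bridge $J$ — which adds $f$ as a new attachment — never pushes $J$ above three attachments. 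This is the main obstacle: I expect to show, using planarity in the nice embedding together with the fact (noted after Lemma \ref{thm:infiniteblocks}) that the construction forbids edges from $F$ to $V(\partial N[f_1,f_2])-F$ (respectively $V(X_H[f_1,f_2])-F$), that any $J$ reachable from $F$ by such an extra edge had at most two attachments in $H$, so it still has at most three in $G$. Its old representative still works (Remark \ref{thm:2att3att}), so $S$ remains an SDR. Hence $P$ is a $1$-way infinite Tutte path in $G$ from $u$ through $v$ with SDR $S$ of the $P$-bridges.
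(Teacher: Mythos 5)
Your proposal is correct and follows the paper's own proof essentially step for step: the same three-way case split ($F=\emptyset$ handled via Lemma \ref{thm:locallyfinitenet}, Lemma \ref{thm:tightradialnet}, Lemma \ref{thm:ksconnected}\ref{ksc} and Theorem \ref{thm:onewaypathlocfin}; the no-infinite-block case via Lemma \ref{thm:infiniteblocks} and blockwise applications of Theorem \ref{thm:gry}; the infinite-block case via Lemma \ref{thm:infiniteblock} and Theorem \ref{thm:onewaypathlocfin}), followed by the same transfer from $H$ to $G$ using that every edge of $E(G)-E(H)$ meets $F\subseteq\{u,v\}\subseteq V(P)$ together with Remark \ref{thm:2att3att}. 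Your closing discussion of why bridges gaining an attachment in $F$ stay within three attachments is in fact more explicit than the paper's one-sentence appeal to Remark \ref{thm:2att3att}.
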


 \begin{proof}
 If $F = \emptyset$ then we can use Lemma \ref{thm:locallyfinitenet} to
find a net in $G$, Lemma \ref{thm:tightradialnet} to adjust a radial net
so that $u = v \in V(C_1)$, Lemma \ref{thm:ksconnected}\ref{ksc} to show
that $G$ is $(3,C_1)$- or $(3,\partial N)$-connected, and Theorem
\ref{thm:onewaypathlocfin} to find $P$ and $S$.

 So suppose that $|F| \ge 1$.
 Assume first that $G - F$ has no infinite block.
 By Lemma \ref{thm:infiniteblocks}, $|F| = 2$ and $G$ has a spanning
$1$-way infinite plane chain of finite circuit blocks $H = (B_1, b_1,
B_2, b_2, \dots)$ such that $F \subseteq V(X_{B_1}) - \{b_1\}$.
 If $i > 1$ then by Theorem \ref{thm:gry} we find an $X_{B_i}$-Tutte
$b_{i-1} b_i$-path $P_i$ in $B_i$ and an SDR $S_i$ of the $P_i$-bridges
in $B_i$ such that $b_{i-1} \notin S_i$.
 In $B_1$, by Theorem \ref{thm:gry}, we find an $X_{B_1}$-Tutte
$ub_1$-path $P_1$ in $B_1$ through $v$ and an SDR $S_1$ of the
$P_1$-bridges in $B_1$ such that $u \notin S_1$.
 Let $P = \bigcup_{i=1}^\infty P_i$ and $S = \bigcup_{i=1}^\infty S_i$,
then $P$ is an $X_H$-Tutte path from $u$ through $v$ in $H$ with an SDR
$S$ of the $P$-bridges in $H$.
 Now assume that $G-F$ has an infinite block.
 By Lemma \ref{thm:infiniteblock}, $G-F$ has a spanning subgraph $H$ with
a ladder net $N$ so that $H$ is $(3, \partial N)$-connected.
 Apply Theorem \ref{thm:onewaypathlocfin} to $H$ to obtain a $\partial
N$-Tutte path $P$ from $u$ through $v$ in $H$ with an SDR $S$ of the
$P$-bridges in $H$.
 In both cases, all edges of $G$ not in $H$ are incident with at least
one vertex of $F$, and applying Remark \ref{thm:2att3att} shows that $P$
is a Tutte path in $G$ and $S$ is still an SDR of the $P$-bridges in
$G$.
 \end{proof}

 We now find $1$-way infinite $2$-walks, using a $1$-way infinite Tutte
path $P$ provided by Theorem \ref{thm:onewaypathlocfin} or
\ref{thm:onewaypath} as the skeleton of each $2$-walk.
 To detour into each nontrivial $P$-bridge to visit the remaining
vertices, we use some results of Gao, Richter and Yu from the proof of
\cite[Theorem 6]{bib:gaorichteryu}.
 Lemma \ref{thm:finitetwowalk} is the more technical result from which
their Theorem 6 follows, modified to include the case of a trivial block.
 Gao, Richter and Yu also examine the structure of a bridge $L$ of a
Tutte subgraph in a circuit graph.
 Parts (a) and (b) of Lemma \ref{thm:bridgestructure} correspond to when
$L$ has three or two vertices of attachment, respectively;
 part (b) is just a special case of our Lemma \ref{thm:chaindelete4}(b).

 For a plane graph $G$, an \emph{internal $3$-cut} is a $3$-cut $A$ of
$G$ such that $G-A$ has a component vertex-disjoint from $X_G$.  Let
$N_G(v)$ denote the set of vertices adjacent to $v$ in $G$.

 \begin{lemma}[{\cite[proof of Theorem 6]{bib:gaorichteryu}}]
 \label{thm:finitetwowalk}
 Let $(G, X_G)$ be a circuit block, and let $x, y \in V(X_G)$ with $x
\ne y$.  Then $G$ contains a closed $2$-walk visiting $x$ and $y$
exactly once, such that every vertex visited twice is either in an
internal $3$-cut $A$ of $G$, or in a $2$-cut $A$ of $G$ with $A
\subseteq V(X_G[x, y])$ or $A \subseteq V(X_G[y, x])$.
 \end{lemma}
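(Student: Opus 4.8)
The plan is to build the closed $2$-walk around a Tutte $xy$-path and then recurse into the bridges to pick up their interior vertices, mimicking the passage from a Tutte path to a $2$-walk in \cite{bib:gaorichteryu}. I would argue by induction on $|V(G)|$. The base case is the trivial block $G=xy$, handled by the degenerate walk traversing the single edge, which meets $x$ and $y$ once and uses no vertex twice; this is the modification to the original statement mentioned for trivial blocks. For the inductive step assume $(G,X_G)$ is a circuit graph with at least three vertices. I would first apply Corollary \ref{thm:gryedge} to obtain an $X_G$-Tutte $xy$-path $P$ through a carefully chosen edge $e$ lying on one of the arcs $X_G[x,y]$, $X_G[y,x]$, together with an SDR $S$ of the nontrivial $P$-bridges with $x\notin S$. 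By the definition of an $X_G$-Tutte path, every $P$-bridge has at most three attachments, and every $P$-bridge containing an edge of $X_G$ has at most two; this two-versus-three dichotomy is exactly what will separate internal $3$-cuts from arc $2$-cuts.

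Next I would exploit the fact that each nontrivial $P$-bridge $L$, once its attachments and a suitable bounding cycle are added, is again a circuit block on strictly fewer vertices (its connectivity following from Lemma \ref{thm:ksconnected}\ref{ksd} and Lemma \ref{thm:circuitinduction}, and its chain structure in the two-attachment case from Lemma \ref{thm:chaindelete4}\ref{cdb}). Choosing two of the attachments of $L$ as the distinguished pair — and taking the representative of $L$ in $S$ as one of them, so that distinct bridges never share an entry vertex — the induction hypothesis supplies a closed $2$-walk of $L$ meeting the two chosen attachments exactly once. I would then splice these bridge-walks into $P$ to form a single closed walk $W$: traverse $P$ from $x$ to $y$, inserting each bridge's $2$-walk at its entry vertex, and return from $y$ to $x$ along the remaining boundary arc (whose own bridges are serviced in the same way). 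The edge $e$ is chosen so that $x$ and $y$ each lie on exactly one of the forward and return portions, which keeps them single-visit.

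The heart of the argument — and the step I expect to be the main obstacle — is controlling the vertices used twice. In $W$ a vertex is repeated only as an entry/exit attachment where a bridge-walk is spliced in, or where the forward path $P$ meets the return arc. Using planarity and the attachment dichotomy, I would show that an interior bridge (one with no edge of $X_G$, and hence three attachments) contributes an attachment set that is an internal $3$-cut of $G$, while a bridge carrying an edge of $X_G$ has its two attachments on a single arc and so yields a $2$-cut contained in $X_G[x,y]$ or $X_G[y,x]$. The delicate point is that a cut of the small circuit block $L$ need not be a cut of $G$, so the recursion's classification must be transported back to $G$: an internal $3$-cut of $L$ stays an internal $3$-cut of $G$ because the component it isolates remains disjoint from $X_G$, and a $2$-cut of $L$ lying in an arc of $X_L$ maps either to a $2$-cut inside the corresponding arc of $X_G$ or to an internal $3$-cut, according to whether its vertices are attachments shared with $P$.

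Assembling these ingredients via a Jigsaw-style bookkeeping (Lemma \ref{thm:jigsaw}) — ensuring that the pieces fit into one closed walk, that $x$ and $y$ are visited exactly once, and that disjointness of the SDR keeps each vertex an entry point for at most one bridge — is where essentially all of the technical effort lies. I anticipate that the cleanest formulation keeps the forward arc $X_G[x,y]$ and the return arc $X_G[y,x]$ rigidly separated throughout, so that the final \emph{arc-membership} check for each repeated vertex reduces to tracking, at each splice, which side of $P$ the serviced bridge sits on.
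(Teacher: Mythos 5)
The paper never proves this lemma: it is imported, with only the trivial-block case added, from Gao, Richter and Yu's proof of their Theorem 6, and the closest in-house analogue is the proof of Theorem~\ref{thm:onewaytwowalklocfin}, where the same machinery (a Tutte skeleton with an SDR, the chain decompositions of Lemma~\ref{thm:bridgestructure}, closed $2$-walks spliced at representatives, and transport of block cuts back to $G$) is run with a $1$-way infinite path. Your outline names exactly these ingredients, so at the level of ingredients you are on the standard route; the problems are in how you assemble them into a \emph{closed} walk.

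The fatal step is the closing move: ``traverse $P$ from $x$ to $y$ \dots\ and return from $y$ to $x$ along the remaining boundary arc.'' Nothing in Corollary~\ref{thm:gryedge} keeps $P$ off the interior of $X_G[y,x]$; in general $P$ uses vertices and even edges of both arcs, so every vertex of $V(P) \cap V(X_G[y,x])$ would be visited once on the forward traversal and again on the return, and such a vertex carries no internal $3$-cut or arc $2$-cut certificate. Moreover the return traversal is not even needed for coverage: vertices of the arc not on $P$ lie inside $2$-attachment $P$-bridges and are already picked up by the spliced bridge walks. Closing the walk correctly requires a closed Tutte skeleton (e.g.\ a Tutte cycle through $x$ and $y$), and that is where the real technicality of the cited proof lives, because splicing a closed bridge walk at a representative $a$ makes $a$ appear twice: you therefore need \emph{both} $x \notin S$ and $y \notin S$, whereas Theorem~\ref{thm:gry} and Corollary~\ref{thm:gryedge} exclude only one designated vertex, and your proposal never addresses $y$ at all (if $y$ is a representative, $y$ is visited twice, violating the statement). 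Finally, in the three-attachment case the recursion must pass through the chain $L - \{b,c\}$ of Lemma~\ref{thm:bridgestructure}(a), applying the lemma to each block $B_i$ at the consecutive cut vertices $b_{i-1}, b_i$, so that the new twice-visited vertices are exactly the $b_i$, certified by the $3$-cuts $\{b_i, b, c\}$; your plan to apply the induction hypothesis to $L$ itself ``once its attachments and a suitable bounding cycle are added,'' with two attachments as the distinguished pair, does not typecheck --- a bridge with its attachments is not in general a circuit block, and its outer arcs do not correspond to arcs of $X_G$, so the cut classification could not be transported as stated.
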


 \begin{lemma}[{\cite[proof of Theorem 6]{bib:gaorichteryu}}]
 \label{thm:bridgestructure}
 Let $L$ be a plane graph.

 \smallskip\noindent
 (a) Suppose $L$ is $(3,\{a,b,c\})$-connected, where $a,b,c$ are
distinct vertices each appearing once on $X_L$, in that clockwise order.
 Then $L-\{b,c\}$ is a plane chain of circuit blocks
 $K=(a=b_0, B_1, b_1, \ldots, b_{k-1}, B_k, b_k=d)$
 where $N_L(b) \subseteq V(X_K[a,d]) \cup \{c\}$ and $N_L(c) \subseteq
V(X_K[d,a]) \cup \{b\}$.

 \smallskip\noindent
 (b) Suppose $L$ is $(3,X_L[a,b])$-connected, where $a \ne b$.  Then
$L-b$ is a plane chain of circuit blocks $K=(a=b_0, B_1, b_1, \ldots,
b_{k-1}, B_k, b_k=d)$ where $d$ is the neighbor of $b$ in $X_L[a,b]$,
and $N_L(b) \subseteq V(X_K[d,a])$.
 \end{lemma}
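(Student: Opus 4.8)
I would prove both parts with the single-vertex deletion result, Lemma~\ref{thm:chaindelete4}\ref{cdb}, obtaining (a) from the method of (b) by first deleting one of the two extra attachments.

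\emph{Part (b).} Let $d$ be the neighbor of $b$ on $X_L[a,b]$ and put $P=X_L[a,d]\subseteq X_L$, a path from $a$ to $d$ avoiding $b$ with $V(P)\cup\{b\}=V(X_L[a,b])$. Thus the hypothesis that $L$ is $(3,X_L[a,b])$-connected is literally that $L$ is $(3,P\cup\{b\})$-connected, and Lemma~\ref{thm:chaindelete4}\ref{cdb} yields that $K=L-b$ is a plane chain of circuit blocks along $P=X_K[a,d]$, with endpoints $a$ and $d$. To place the neighbors of $b$ I would use the standard planar fact that deleting a vertex $b$ of the outer face exposes precisely its neighbors, as a contiguous arc, on the complementary side $X_K[d,a]$ of the new outer walk; hence $N_L(b)\subseteq V(X_K[d,a])$ (note that $d$ itself is such a neighbor and lies on this arc).

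\emph{Part (a).} First, $M:=L-\{b,c\}$ is connected: applying $(3,\{a,b,c\})$-connectedness to the $2$-cut $\{b,c\}$ shows every component of $M$ meets $\{a,b,c\}\setminus\{b,c\}=\{a\}$, so there is only one. Next set $G_0=L-c$; by Lemma~\ref{thm:ksconnected}\ref{ksd} applied to the subgraph $G_0$ of $L$ we get that $G_0$ is $(3,S_0)$-connected with $S_0=\{a,b\}\cup N_L(c)$, and, as $c$ lay on $X_L$, the set $N_L(c)$ appears as a contiguous arc (the gap left by $c$) on $X_{G_0}$. Now I choose the spine: let $d$ be the far end of this gap and let $P=X_{G_0}[d,a]$ be the arc running from $d$, across the gap and along the remnant of $X_L[c,a]$, back to $a$, so that $a$ is an \emph{endpoint} of $P$, $P$ avoids $b$, and $V(P)\supseteq\{a\}\cup(N_L(c)\setminus\{b\})$. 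Then $V(P)\cup\{b\}\supseteq S_0$, so by Lemma~\ref{thm:ksconnected}\ref{ksa} $G_0$ is $(3,P\cup\{b\})$-connected, and Lemma~\ref{thm:chaindelete4}\ref{cdb} makes $M=G_0-b$ a plane chain of circuit blocks along $P$, with endpoints $a$ and $d$.

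Finally I would read off the two containments. The side $X_K[d,a]=P$ contains the whole $c$-gap, hence every vertex of $N_L(c)\setminus\{b\}$ (a chord neighbor of $c$ that also meets the far arc still occurs in the gap, or becomes the endpoint $d$), giving $N_L(c)\subseteq V(X_K[d,a])\cup\{b\}$; and deleting the boundary vertex $b$ from $G_0$ exposes $N_{G_0}(b)=N_L(b)\setminus\{c\}$ on the complementary side $X_K[a,d]$, giving $N_L(b)\subseteq V(X_K[a,d])\cup\{c\}$. The main obstacle is exactly this planar bookkeeping: confirming that deletion of a boundary vertex exposes its neighbors contiguously on one side, that chords from $b$ or $c$ reaching the ``wrong'' arc are absorbed (either inside the relevant gap or at the common endpoints $a,d$), and disposing of the degenerate low-connectivity configurations --- for instance $G_0=L-c$ need not be $2$-connected, and could even disconnect, in which case connectedness of $M$ forces $c$ to be the only neighbor of $b$ and the containment for $N_L(b)$ becomes trivial. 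It is here that the \emph{clockwise} order of $a,b,c$ on $X_L$ does the essential work, forcing $N_L(b)\setminus\{c\}$ and $N_L(c)\setminus\{b\}$ onto opposite sides of the boundary as it folds at $a$.
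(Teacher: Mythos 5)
Your route is the one the paper itself points to: the paper gives no self-contained proof of this lemma, citing the Gao--Richter--Yu proof of their Theorem 6 and remarking that part (b) is just a special case of Lemma~\ref{thm:chaindelete4}\ref{cdb}. Your part (b) is exactly that remark made explicit (note that $(3,X_L[a,b])$-connected and $(3,P\cup\{b\})$-connected coincide because the two subgraphs have the same vertex set), and your part (a) --- delete $c$, transfer connectivity with Lemma~\ref{thm:ksconnected}\ref{ksd} and \ref{ksa}, then delete $b$ via Lemma~\ref{thm:chaindelete4}\ref{cdb} --- is the same successive-deletion scheme that underlies the original. Your preliminary observation that $M=L-\{b,c\}$ is connected, and your remark that stray chords are harmless because their ends become cutvertices of $K$ and hence appear on both arcs of $X_K$, are both correct and are precisely the right bookkeeping.

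Two of the points you defer to ``planar bookkeeping'' genuinely need an argument, and one of them blocks your construction as literally written. First, Lemma~\ref{thm:chaindelete4}\ref{cdb} requires $P$ to be a \emph{path}, whereas $X_{G_0}[d,a]$ is a priori only a subwalk of the outer walk: a cutvertex $v$ of $G_0=L-c$ could occur on it twice (once in the gap, once on the remnant of $X_L[c,a]$, or twice in the gap). This is where $(3,\{a,b,c\})$-connectedness must be invoked: the stretch of $X_{G_0}$ strictly between two such occurrences lies inside $X_{G_0}[d,a]$, so it avoids $a$ and $b$, and its vertices supply a component of $G_0-v=L-\{c,v\}$ containing neither $a$ nor $b$, contradicting the hypothesis with $T=\{c,v\}$. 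Second, if $b$ and $c$ are consecutive on $X_L$, i.e.\ $bc\in E(X_L)$, then the far end of the gap is $b$ itself, so your $P=X_{G_0}[d,a]$ contains $b$ and Lemma~\ref{thm:chaindelete4}\ref{cdb} cannot be applied with $b$ as the deleted vertex; the fix is to redefine $d$ as the vertex following $b$ along the gap (possibly $d=a$ and $P$ is a single vertex, which the lemma tolerates). This is in fact the \emph{only} way $b$ can meet $P$: since $M$ is connected, $b$ is not a cutvertex of $G_0$, so it occurs once on $X_{G_0}$, at its old position outside the arc, and a chord $bc$ not on $X_L$ would enclose a component of $M$ missing $a$, which your connectivity argument already excludes. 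Finally, ``deleting $b$ exposes precisely its neighbors as a contiguous arc'' is slightly overstated --- the newly exposed stretch interleaves other vertices among the neighbors --- but the containments $N_L(b)\subseteq V(X_K[a,d])\cup\{c\}$ and $N_L(c)\subseteq V(X_K[d,a])\cup\{b\}$ are all you use, and with the two patches above they follow. With those repairs the proposal is a complete proof.
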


 \begin{theorem}\label{thm:onewaytwowalklocfin}
 Let $G$ be a $2$-indivisible infinite plane graph with a net $N=(C_1,
C_2, C_3, \dots)$.
 If $N$ is a radial net, let $\De = C_1$.
 If $N$ is a ladder net, let $\De = \partial N$.
 Assume that $u \in V(\De)$ and that $G$ is $(3, \De)$-connected.
 Then $G$ contains a $1$-way infinite $2$-walk beginning at $u$ such
that every vertex used more than once belongs to a $2$- or $3$-cut of
$G$.
 \end{theorem}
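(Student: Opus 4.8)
The plan is to use the $1$-way infinite Tutte path supplied by Theorem~\ref{thm:onewaypathlocfin} as the skeleton of the $2$-walk, and then to cover each nontrivial bridge by a finite closed detour, using the SDR to keep every vertex from being visited too often. First I would reduce to the form of Theorem~\ref{thm:onewaypathlocfin}: in the radial case we already have $u\in V(C_1)$, and in the ladder case $u\in V(\partial N)$, so in either case I would set $v=u$ and apply Theorem~\ref{thm:onewaypathlocfin} to obtain a $1$-way infinite $\De$-Tutte path $P$ beginning at $u$ together with an SDR $S$, given by an assignment function $\sigma$, of the nontrivial $P$-bridges. The relevant facts about $P$ are that $G$ is $2$-connected, every nontrivial $P$-bridge $L$ is finite with exactly $2$ or $3$ attachments (at most $2$ if $L$ meets $\De$), each representative $\sigma(L)\in V(L)\cap V(P)$, distinct bridges get distinct representatives, and every vertex of $G$ not on $P$ is an internal vertex of exactly one nontrivial bridge.

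Next I would build, for each nontrivial $P$-bridge $L$, a finite closed local $2$-walk $W_L$ covering $V(L)$. When $L$ has three attachments I would apply Lemma~\ref{thm:bridgestructure}(a), and when $L$ has two attachments Lemma~\ref{thm:bridgestructure}(b), to expose $L$ as a plane chain of circuit blocks relative to its attachments; then Lemma~\ref{thm:finitetwowalk} applied to these blocks produces a closed $2$-walk $W_L$ that visits the entry vertex $\sigma(L)$ and the other attachments of $L$ at most once, and whose internal doubly-used vertices all lie in an internal $3$-cut of $L$ or in a $2$-cut of $L$ contained in one boundary arc. I would then form the desired walk $W$ by traversing $P$ and splicing $W_L$ in as a closed detour at $\sigma(L)$, for every nontrivial bridge $L$. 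Since $P$ is infinite and each $W_L$ is finite, $W$ is again a $1$-way infinite walk beginning at $u$; it is spanning because $P$ covers $V(P)$ and the detours cover the internal vertices of every nontrivial bridge, while trivial bridges contribute no new vertices.

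The hard part will be the global bookkeeping: showing that $W$ visits every vertex at most twice and that each doubly-used vertex lies in a $2$- or $3$-cut of $G$. A vertex off $P$ lies in a unique bridge and is used at most twice by the corresponding $W_L$. A vertex $w\in V(P)$ is used once by $P$, and each detour through a bridge having $w$ as an attachment may add a use; the SDR guarantees that $w$ is the entry point of at most one bridge, and the detours are designed so that each non-entry attachment is touched at most once, but verifying that these contributions never sum to more than two is the crux, and is precisely where the disjointness of the SDRs and the careful construction of $P$ in Theorems~\ref{thm:extendradialnet} and~\ref{thm:extendladdernet} must be invoked. Once the bound of two is established, the classification is routine: an attachment used twice lies in the attachment set of its bridge, which is a $2$- or $3$-cut of $G$ since a nontrivial bridge meets the rest of $G$ only at its attachments; and an internal vertex used twice lies, by Lemma~\ref{thm:finitetwowalk}, in an internal $3$-cut or an arc $2$-cut of its bridge $L$, and the same observation shows that such a cut is also a $2$- or $3$-cut of $G$. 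This yields the $1$-way infinite $2$-walk beginning at $u$ with the stated property.
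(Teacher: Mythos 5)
Your overall plan is exactly the paper's: get the $1$-way infinite $\Delta$-Tutte path $P$ and SDR from Theorem~\ref{thm:onewaypathlocfin}, decompose each nontrivial $P$-bridge via Lemma~\ref{thm:bridgestructure}, cover it with $2$-walks from Lemma~\ref{thm:finitetwowalk}, and splice the resulting closed detour into $P$ at the SDR representative. But you leave the self-identified crux --- that no vertex is used more than twice --- unresolved, and the way you set up the detours would actually break it. You say $W_L$ ``visits the entry vertex $\sigma(L)$ and the other attachments of $L$ at most once.'' If the detour into $L$ touched a non-entry attachment even once, the count fails: a vertex $w \in V(P)$ can be an attachment of several distinct bridges, so $w$ could be used once by $P$, once as the entry point of one bridge, and once more as a non-entry attachment of another --- three uses. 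The paper's construction makes the detour avoid the non-entry attachments \emph{entirely}: Lemma~\ref{thm:bridgestructure} deletes them, so the $2$-walk $W_a$ lives in $K = L - \{b,c\}$ (three attachments) or $K = L - b$ (two attachments), and the deleted attachments already lie on $P$, so spanning is preserved. With that, the bound of two is immediate from the SDR alone --- each vertex of $P$ is the entry point of at most one bridge, and each vertex off $P$ is internal to a unique bridge and covered by that single detour --- and, contrary to your suggestion, no appeal to the internal construction of $P$ in Theorems~\ref{thm:extendradialnet} and~\ref{thm:extendladdernet} is needed at this stage.

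Two smaller classification points are also glossed over. First, when the block $2$-walks are concatenated along the chain $K$, the cutvertices $b_i$ are each visited twice (once by each adjacent block's walk); your taxonomy (internal $3$-cut or arc $2$-cut from Lemma~\ref{thm:finitetwowalk}) misses them. The paper handles them by observing that $\{b_i, b, c\}$ (resp.\ $\{b_i, b\}$) is a $3$-cut (resp.\ $2$-cut) of $G$. Second, your claim that a $2$-cut $A$ of the bridge contained in one boundary arc ``is also a $2$- or $3$-cut of $G$ by the same observation'' is not right as stated: $A$ by itself need not separate $G$, since $b$ or $c$ may attach to the piece that $A$ cuts off in $K$. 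One needs the neighborhood containments $N_L(b) \subseteq V(X_K[a,d]) \cup \{c\}$ and $N_L(c) \subseteq V(X_K[d,a]) \cup \{b\}$ from Lemma~\ref{thm:bridgestructure}(a) to conclude that $A \cup \{b\}$ or $A \cup \{c\}$ is a $3$-cut of $G$ (and analogously $A \cup \{b\}$ in the two-attachment case). You should also record why Lemma~\ref{thm:bridgestructure} applies at all: a three-attachment bridge contains no edge of $\Delta$ and so is $(3,\{a,b,c\})$-connected by Lemma~\ref{thm:ksconnected}\ref{ksd}, while a two-attachment bridge has both attachments on $\Delta$ and one arc of $X_L$ a subpath of $\Delta$, giving $(3,R)$-connectedness for that arc $R$.
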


 \begin{proof} By Theorem \ref{thm:onewaypathlocfin}, $G$ contains a
$1$-way infinite $\De$-Tutte path $P$ beginning at $u$ and an SDR $S$ of
the $P$-bridges.  Our walk $W$ will traverse $P$, beginning at $u$,
until we reach a vertex $a \in S$ that is a representative of a
nontrivial $P$-bridge $L$.

 If $L$ has three attachments $a,b,c$, then $L$ cannot contain an edge
of $\De$, so $L$ is $(3,\{a,b,c\})$-connected by Lemma
\ref{thm:ksconnected}\ref{ksd}.
 Then $L-\{b,c\}$ is a plane chain of circuit blocks
 $K=(a=b_0, B_1, b_1, \ldots, b_{k-1}, B_k, b_k=d)$
 as in Lemma \ref{thm:bridgestructure}(a), and we can apply Lemma
\ref{thm:finitetwowalk} to each block $B_i$ to get a $2$-walk using
$b_{i-1}$ and $b_i$ only once.  Combining these $2$-walks yields a
$2$-walk $W_a$ in $K$.  Each vertex used twice by $W_a$ is (i) some
$b_i$, $i \ge 1$, which is in a $3$-cut $\{b_i,b,c\}$ of $G$, (ii)  in
an internal $3$-cut of $K$, which is also a $3$-cut of $G$, or (iii) in
a $2$-cut $A$ of $K$ contained in either $X_K[a,d]$ or $X_K[d,a]$, so
that one of $A \cup \{b\}$ or $A \cup \{c\}$ is a $3$-cut of $G$.

 If $L$ has two attachments $a,b$ then necessarily $a, b \in
V(\De)$, and one of $X_L[a,b]$ or $X_L[b,a]$, call it $R$, is a subpath
of $\De$.
 By Lemma \ref{thm:ksconnected}\ref{ksd}, $L$ is $(3,R)$-connected.
 Then $L-b$ is a plane chain of circuit blocks $K$ as in Lemma
\ref{thm:bridgestructure}(b) (or its mirror image), and we can apply
Lemma \ref{thm:finitetwowalk} to each block, combining the resulting
$2$-walks to obtain a $2$-walk $W_a$.  Each vertex used twice by $W_a$
is (i) some $b_i$, $i \ge 1$, which is in a $2$-cut $\{b_i,b\}$ of $G$,
(ii) in an internal $3$-cut of $K$, which is also a $3$-cut of $G$, or
(iii) in a $2$-cut $A$ of $K$, which is either a $2$-cut of $G$ or such
that $A \cup \{b\}$ is a $3$-cut of $G$.
 
 Splicing $W_a$ into $P$ for every representative $a \in S$ gives the
required $2$-walk $W$.
 The vertices used twice by $W$ are the vertices used twice by each
$W_a$ and the vertices $a \in S$ themselves, each of which lies in a
$2$- or $3$-cut of $G$.
 \end{proof}

 The following result
can be proved in the same way as Theorem
\ref{thm:onewaytwowalklocfin}, using Theorem \ref{thm:onewaypath}
instead of Theorem \ref{thm:onewaypathlocfin}.
 In fact, the proof is simpler, because now $P$ has no bridges with two
attachments.
 Our main result, Theorem \ref{thm:onewaytwowalkintro}, follows
immediately from this.

 \begin{theorem}\label{thm:onewaytwowalk}
 Let $G$ be a $3$-connected $2$-indivisible infinite plane graph, and
let $F$ be the set of vertices of infinite degree in $G$.  If $F =
\emptyset$, then $G$ has a net $N$.  Let $u \in V(G)$ be arbitrary if
$N$ is a radial net, and let $u \in V(\partial N)$ if $N$ is a ladder
net.  Otherwise, let $u \in F$.  Then $G$ contains a $1$-way infinite
$2$-walk beginning at $u$ such that every vertex used more than once
belongs to a $3$-cut of $G$.
 \end{theorem}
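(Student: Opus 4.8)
The plan is to follow the proof of Theorem \ref{thm:onewaytwowalklocfin} almost verbatim, substituting Theorem \ref{thm:onewaypath} for Theorem \ref{thm:onewaypathlocfin} as the source of the skeleton path. First I would apply Theorem \ref{thm:onewaypath} to obtain a $1$-way infinite Tutte path $P$ in $G$ beginning at $u$ (passing through the second infinite-degree vertex when $|F|=2$), together with an SDR $S$ of the nontrivial $P$-bridges. The walk $W$ will run along $P$, and whenever it reaches a representative $a \in S$ it will detour through the bridge that $a$ represents so as to visit the vertices interior to that bridge, before returning to $P$.

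The key simplification over the locally finite setting comes from $3$-connectivity. A nontrivial $P$-bridge with only two attachments $a,b$ would make $\{a,b\}$ a $2$-cut of $G$, which cannot occur; combined with the Tutte property (at most three attachments), every nontrivial $P$-bridge therefore has \emph{exactly} three attachments. Hence the entire two-attachment branch of Theorem \ref{thm:onewaytwowalklocfin} is vacuous, and with it disappears any doubly-used vertex that lies only in a $2$-cut. This is precisely why the conclusion can be strengthened from ``$2$- or $3$-cut'' to ``$3$-cut.''

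For each representative $a \in S$, let $L$ be the nontrivial $P$-bridge it represents, with the three attachments labelled $a,b,c$. Since $G$ is $3$-connected and the internal vertices of $L$ reach the rest of $G$ only through $a,b,c$, Lemma \ref{thm:ksconnected}\ref{ksd} shows $L$ is $(3,\{a,b,c\})$-connected, so Lemma \ref{thm:bridgestructure}(a) writes $L-\{b,c\}$ as a plane chain of circuit blocks $K=(a=b_0,B_1,b_1,\dots,b_{k-1},B_k,b_k=d)$. Applying Lemma \ref{thm:finitetwowalk} to each block $B_i$ (with its two ends $b_{i-1},b_i$) and concatenating the resulting closed $2$-walks produces a $2$-walk $W_a$ covering $a$ together with all internal vertices of $L$; splicing each $W_a$ into $P$ at $a$ gives the required $1$-way infinite $2$-walk. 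The doubly-used vertices are catalogued exactly as before: a cutvertex $b_i$ with $i\ge 1$ lies in the $3$-cut $\{b_i,b,c\}$ of $G$; an internal $3$-cut of $K$ is a $3$-cut of $G$; a $2$-cut $A$ of some block, once $b$ or $c$ is adjoined, forms a $3$-cut of $G$; and each representative $a$ itself, used twice, lies in the $3$-cut $\{a,b,c\}$.

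I expect the only real content to be the cut bookkeeping, with the main delicate point being the type-(iii) case: verifying that a $2$-cut $A$ of a block of $K$ genuinely extends to a $3$-cut of $G$ and not merely of $L$. This uses that $b$ and $c$ are the only vertices through which $L$ communicates with $G-V(L)$, so that $A$ separates a piece of $K$ from $G$ precisely after $b$ or $c$ is added. Spanning-ness and the at-most-twice condition are immediate, since each $P$-bridge is finite, the SDR assigns distinct representatives, and the detours $W_a$ are pairwise disjoint off $P$; no new infinite limiting argument is needed, as all the additional work takes place inside finite bridges. Theorem \ref{thm:onewaytwowalkintro} then follows at once.
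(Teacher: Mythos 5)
Your proposal is correct and matches the paper's intended argument exactly: the paper proves Theorem \ref{thm:onewaytwowalk} by repeating the proof of Theorem \ref{thm:onewaytwowalklocfin} with Theorem \ref{thm:onewaypath} supplying the skeleton path, noting precisely as you do that the proof simplifies because no nontrivial $P$-bridge can have two attachments. You also correctly identify that this vanishing of the two-attachment case is what upgrades the conclusion from ``$2$- or $3$-cut'' to ``$3$-cut,'' which is the only substantive point the paper leaves implicit.
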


\color{black} 

\section{Infinite Prisms}\label{section:prisms}

In this final section, we discuss spanning paths in prisms over infinite
planar graphs.  The \emph{prism} over a graph $G$ is the Cartesian
product $G \Box K_2$ of $G$ with the complete graph $K_2$.
 In \cite{bib:biebighauserellinghamprisms}, we showed that prisms over
bipartite circuit graphs and near-triangulations are hamiltonian.  A
\emph{near-triangulation} is a finite plane graph where every face is a
triangle, except for possibly the outer face, which is bounded by a
cycle.
 Here we extend these results to infinite graphs by showing that if $G$
is a $2$-indivisible infinite analog of a bipartite circuit graph or
near-triangulation then $G \Box K_2$ has a $1$-way infinite spanning
path.
 Tracing this path in the original graph $G$ gives a $1$-way infinite
$2$-walk.
 So for these classes of graphs we can strengthen the existence of a
$1$-way infinite $2$-walk in a somewhat different way from what we did
in Theorems \ref{thm:onewaytwowalklocfin} and \ref{thm:onewaytwowalk},
where we controlled the location of the vertices used twice.

 In the prism $G \Box K_2$, we may identify $G$ with one of its two
copies in the prism.  Let $v$ be a vertex in $G$.  In the prism, we let
$v$ denote the copy of the vertex in the graph that is identified with
$G$, and we let $v^*$ denote the other copy.  We use the same notation
for edges.  An edge of the form $vv^*$ is called a \emph{vertical} edge.
 Below we often take $u \in V(G)$ and find a path in $G \Box K_2$
beginning at $u$; then there is also a symmetric path beginning at
$u^*$.

 Our results for infinite bipartite graphs depend on the following
result for finite graphs.

 \begin{lemma}[{\cite[Theorem 2.4]{bib:biebighauserellinghamprisms}}]
 \label{thm:bipartitecircuit}
 Let $(G, X_G)$ be a bipartite circuit graph and let $u, v$ be two
distinct vertices in $X_G$.  Then there is a hamilton cycle in $G \Box
K_2$ that uses the vertical edges at $u$ and $v$.
 \end{lemma}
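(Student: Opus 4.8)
The plan is to prove the statement by induction on $|V(G)|$, using an $X_G$-Tutte path as a skeleton and splicing in prisms over its bridges. For the base case, if $G \isom K_2$ with $V(G)=\{u,v\}$, then $G \Box K_2$ is the $4$-cycle $u\,v\,v^*\,u^*$, which uses both vertical edges $uu^*$ and $vv^*$; the remaining small cases are handled directly. For the inductive step I would first apply Theorem \ref{thm:gry} (with $x=u$ and $y=v$, both on $X_G$) to obtain an $X_G$-Tutte $uv$-path $P$ in $G$. Writing $P^*$ for the copy of $P$ in the second layer $G^*$, the subgraph
\[
 Z = P \pl P^* \pl \{uu^*,\,vv^*\}
\]
is a cycle of $G \Box K_2$ through exactly the vertices of $V(P)\cup V(P)^*$, and it uses the two required vertical edges. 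It therefore remains to enlarge $Z$ so that it also covers the interior vertices, and their copies, of each nontrivial $P$-bridge; trivial bridges are chords and need no attention.

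Each nontrivial $P$-bridge $L$ has at most three attachments, all lying on $P$, and at most two if $L$ meets $X_G$; in particular a three-attachment bridge contains no edge of $X_G$, so by Lemma \ref{thm:ksconnected}\ref{ksd} it is $(3,A)$-connected, where $A$ is its attachment set. For such a bridge I would invoke Lemma \ref{thm:bridgestructure}: deleting one attachment (in the two-attachment case, part (b)) or two attachments (in the three-attachment case, part (a)) exhibits the remainder of $L$ as a plane chain of circuit blocks $K=(a=b_0,B_1,b_1,\dots,b_{k-1},B_k,b_k=d)$. Each block $B_i$ is a bipartite circuit graph with strictly fewer vertices than $G$ (since $u,v\in V(P)$ lie outside $L$), so the inductive hypothesis gives a hamilton cycle of $B_i \Box K_2$ using the vertical edges at the cut vertices $b_{i-1}$ and $b_i$. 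Because consecutive block-cycles then share the vertical edge $b_i b_i^*$, deleting that shared edge from both and reconnecting merges them, and iterating along the chain yields a single cycle spanning $K\Box K_2$ that uses only the two vertical edges at the ends $a$ and $d$.

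The final step is to splice this bridge structure into $Z$ at the attachments and to fold in the remaining attachments (the deleted $b$, or $b$ and $c$) together with the edges joining them into $K$. Since the interiors of distinct bridges are pairwise disjoint, these splices can be performed independently, and the outcome is a hamilton cycle of $G \Box K_2$ that still uses $uu^*$ and $vv^*$, completing the induction.

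The main obstacle is the parity bookkeeping forced by bipartiteness during the splice. When the two end vertical edges are removed from the hamilton cycle of $K\Box K_2$, the two resulting paths either join the ends ``straight'' (one path $a\leadsto d$ and one $a^*\leadsto d^*$) or ``crossed'' ($a\leadsto d^*$ and $a^*\leadsto d$), and which case occurs is governed by the parity of the distance between the relevant vertices in the bipartite graph. The detour inserted into $Z$ is consistent with the two layers only in one of these configurations, so the delicate point is to choose which attachments carry the vertical edges, and in which orientation, so that the arcs match the layers of $Z$ correctly --- and, for a three-attachment bridge, to route the detour so that the third attachment and its copy are also picked up exactly once without re-covering the attachments already on $Z$. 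This is precisely where the bipartite hypothesis is essential, and it is the step I expect to require the most care.
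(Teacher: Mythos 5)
Your attempt cannot be checked against an internal argument, because the paper does not prove this lemma at all: it is imported verbatim from \cite[Theorem 2.4]{bib:biebighauserellinghamprisms}, and is used here as a black box. Judged on its own, your proof has a genuine structural gap in the splicing step. Your skeleton is $Z = P \cup P^* \cup \{uu^*, vv^*\}$, the boundary cycle of the ladder $P \Box K_2$, which contains vertical edges \emph{only} at $u$ and $v$. But the detour your inductive machinery produces for a nontrivial $P$-bridge $L$ with designated attachment $a$ is (after merging the block cycles along the chain of Lemma \ref{thm:bridgestructure} by deleting shared rungs, and then deleting the rung $aa^*$) a single path from $a$ to $a^*$ spanning the prism over $L$ minus its other attachments. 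Splicing such a path into a cycle requires the cycle to contain the vertical edge $aa^*$, which $Z$ never does at an internal attachment; and there is no workaround by edge replacement, since the internal vertices of $L$ are adjacent only to the attachments of $L$ --- deleting an edge $aa'$ of $P$ cannot be repaired through the bridge because $a'$ is not adjacent to the interior of $L$, and deleting edges at two different attachments strands the $P$-arc between them. This is precisely why the paper's infinite argument (Theorem \ref{thm:bipartiteprismlocfin}) takes as skeleton the zigzag path $P' = v_1 v_1^* v_2^* v_2 v_3 v_3^* \ldots$, which uses \emph{every} vertical edge, so the rung at each representative is available for deletion. For the finite cycle statement the analogous fix is to start from an $X_G$-Tutte \emph{cycle} through $u$ and $v$ --- even, since $G$ is bipartite --- whose prism has a ``double zigzag'' hamilton cycle using all vertical edges, rather than from a Tutte path and the ladder boundary.

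A second gap: you claim the splices ``can be performed independently'' because bridge interiors are pairwise disjoint. That is false as stated, because two bridges can share an attachment and would then compete for the same vertical edge; this is exactly what the SDR in Theorem \ref{thm:gry} is for, and you quote that theorem but discard its SDR. Conversely, the difficulty you single out as the main obstacle --- the ``straight versus crossed'' parity of the two paths obtained by deleting both end rungs --- does not arise in the standard construction: one deletes a single shared rung $b_ib_i^*$ at a time (turning each block's hamilton cycle into one $b_i$-to-$b_i^*$ path, whose union is again a single cycle), and the final splice deletes the single rung at the representative, so no crossed configuration ever appears. Also, your plan to ``fold in'' the deleted attachments $b$ (and $c$) is wrong: those vertices lie on $P$ and are already covered by the skeleton in both layers. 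The sound parts of your sketch --- the $K_2$ base case, the observation that a three-attachment bridge avoids $X_G$ and is $(3,\{a,b,c\})$-connected, and the decomposition via Lemma \ref{thm:bridgestructure} into chains of bipartite circuit blocks --- are the right ingredients, but without vertical edges at the representatives and without a system of distinct representatives the construction cannot be completed.
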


 \begin{theorem}
 \label{thm:bipartiteprismlocfin}
 Let $G$ be a $2$-indivisible infinite bipartite plane graph with a net
$N=(C_1, C_2, C_3, \dots)$.
 If $N$ is a radial net, let $\De = C_1$.
 If $N$ is a ladder net, let $\De = \partial N$.
 Assume that $u \in V(\De)$ and that $G$ is $(3, \De)$-connected.
 Then $G \Box K_2$ contains a $1$-way infinite spanning path beginning
at $u$.
 \end{theorem}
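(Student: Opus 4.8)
The plan is to follow the scheme of Theorem~\ref{thm:onewaytwowalklocfin}, but with the closed $2$-walks inside bridges (supplied there by Lemma~\ref{thm:finitetwowalk}) replaced by spanning paths of \emph{prisms} over bridges, supplied by Lemma~\ref{thm:bipartitecircuit}. First I would invoke Theorem~\ref{thm:onewaypathlocfin}, taking $v=u$, to obtain a $1$-way infinite $\De$-Tutte path $P=p_1p_2p_3\cdots$ beginning at $u=p_1$, together with an SDR $S$ of its nontrivial bridges. The skeleton of the desired path in $G\Box K_2$ is the zigzag
\[
 u=p_1,\ p_1^*,\ p_2^*,\ p_2,\ p_3,\ p_3^*,\ p_4^*,\ p_4,\ p_5,\ p_5^*,\ \ldots
\]
which is a spanning $1$-way infinite path of the subprism $P\Box K_2$ and traverses the vertical edge $p_ip_i^*$ for every~$i$. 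Everywhere that $p_i$ is not a representative I keep this vertical edge; at each representative $a=p_i\in S$ I splice in a detour through the prism over the bridge it represents, from $a$ to $a^*$.

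The key finite ingredient is that the prism over a chain of bipartite circuit blocks has a hamilton path joining the two copies of one end. Concretely, if $K=(a=b_0,B_1,b_1,\ldots,b_{k-1},B_k,b_k=d)$ is a plane chain of bipartite circuit blocks with $k\ge 1$, then $K\Box K_2$ has a hamilton cycle using the vertical edges at $a$ and at $d$; deleting the edge $aa^*$ then yields a hamilton path of $K\Box K_2$ from $a$ to $a^*$. I would prove the cycle statement by induction on~$k$: for $k=1$ this is Lemma~\ref{thm:bipartitecircuit} (with a trivial block $B_1=K_2$ handled directly, its prism being a $4$-cycle that uses both vertical edges), and for the inductive step I would take hamilton cycles of $(B_1\cup\cdots\cup B_{k-1})\Box K_2$ and of $B_k\Box K_2$, each using the vertical edge at the shared cutvertex $b_{k-1}$, delete that edge from both, and glue the resulting $b_{k-1}$-to-$b_{k-1}^*$ paths along their common endpoints; the vertical edges at $a$ and $d$ survive, giving the required cycle.

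To build the detours I would extract the bridge structure exactly as in the proof of Theorem~\ref{thm:onewaytwowalklocfin}. If the bridge $L$ represented by $a$ has three attachments $a,b,c$, then $L$ contains no edge of $\De$, so $L$ is $(3,\{a,b,c\})$-connected by Lemma~\ref{thm:ksconnected}\ref{ksd} and $L-\{b,c\}$ is a chain of circuit blocks by Lemma~\ref{thm:bridgestructure}(a); if $L$ has two attachments $a,b$, then $L-b$ is such a chain by Lemma~\ref{thm:bridgestructure}(b). In either case the chain $K$ consists of bipartite circuit blocks (being a subgraph of the bipartite graph $G$) and runs from $a$ to some $d$, with $V(K)=\{a\}\cup(\text{internal vertices of }L)$ and $V(K)\cap V(P)=\{a\}$. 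The helper above then gives a hamilton path of $K\Box K_2$ from $a$ to $a^*$, which I substitute for the vertical edge $aa^*$ of the zigzag; the attachments $b,c$ and their copies remain covered by the skeleton.

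Finally I would verify that the spliced object is a spanning $1$-way infinite path. The SDR property makes the substitutions well defined; and since $V(K_L)\cap V(P)=\{a\}$ for the chain $K_L$ of each represented bridge $L$, the detours meet the zigzag only at the splice points $\{a,a^*\}$ and, by the disjointness of bridges off $P$, meet one another not at all. Every vertex of $G\Box K_2$ either lies in $P\Box K_2$ (covered by the zigzag) or is a copy of an internal vertex of some nontrivial bridge (covered by that bridge's detour), so the path is spanning and begins at $u$. The main obstacle is the finite helper lemma, especially the care in the gluing step so that the designated vertical edges persist through the induction; the infinite assembly itself is comparatively routine, and indeed simpler than in Theorem~\ref{thm:onewaytwowalklocfin} because here we need not keep track of which vertices are used twice.
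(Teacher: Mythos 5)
Your proposal is correct and follows essentially the same route as the paper: both take the $1$-way infinite Tutte path and SDR from Theorem~\ref{thm:onewaypathlocfin}, form the zigzag spanning path of $P \Box K_2$ through every vertical edge, decompose each nontrivial $P$-bridge via Lemma~\ref{thm:bridgestructure} exactly as in Theorem~\ref{thm:onewaytwowalklocfin}, and splice in detours built from Lemma~\ref{thm:bipartitecircuit} by deleting vertical edges of the hamilton cycles over the circuit blocks. Your explicit induction for chains of blocks, the handling of trivial ($K_2$) blocks, and the disjointness verification merely spell out details the paper's brief proof leaves implicit (it notes the lemma must be ``extended to allow blocks that are edges''), so there is no substantive difference.
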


\begin{proof}
 By Theorem \ref{thm:onewaypathlocfin}, $G$ contains a $1$-way infinite
Tutte path $P = v_1 v_2 v_3v_4\ldots$ with $v_1 = u$.  Let $P' = v_1
v_1^* v_2^* v_2 v_3 v_3^* v_4^* v_4 \ldots$ be the $1$-way infinite
spanning path of $P \Box K_2$ starting at $u$ and using every vertical
edge.

 We modify $P'$ to detour into $L \Box K_2$ for each nontrivial
$P$-bridge $L$, as follows.  
 As in the proof of Theorem \ref{thm:onewaytwowalklocfin}, $L$ is
decomposed using Lemma \ref{thm:bridgestructure}.  Instead of using
Lemma \ref{thm:finitetwowalk} to obtain $2$-walks in circuit blocks that
we splice together, we use Lemma \ref{thm:bipartitecircuit} (extended to
allow blocks that are edges) to find hamilton cycles in the prisms over
circuit blocks, which we splice together at vertical edges (deleting
those vertical edges) and then splice into $P'$.
 \end{proof}

 The following result can be proved similarly, using Theorem
\ref{thm:onewaypath} instead of Theorem \ref{thm:onewaypathlocfin}.

 \begin{theorem}\label{thm:bipartiteprism}
 Let $G$ be a $3$-connected $2$-indivisible infinite bipartite plane
graph, and let $F$ be the set of vertices of infinite degree in $G$.  If
$F = \emptyset$, then $G$ has a net $N$.  Let $u \in V(G)$ be arbitrary
if $N$ is a radial net, and let $u \in V(\partial N)$ if $N$ is a ladder
net.  Otherwise, let $u \in F$.
 Then $G \Box K_2$ contains a $1$-way infinite spanning path beginning
at $u$.
 \end{theorem}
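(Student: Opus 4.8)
The plan is to follow the proof of Theorem \ref{thm:bipartiteprismlocfin} essentially verbatim, replacing the appeal to Theorem \ref{thm:onewaypathlocfin} by one to Theorem \ref{thm:onewaypath}. First I would apply Theorem \ref{thm:onewaypath} to obtain a $1$-way infinite Tutte path $P = v_1 v_2 v_3 \cdots$ in $G$ with $v_1 = u$, together with an SDR $S$ of the nontrivial $P$-bridges. As before, I form the backbone $P' = v_1 v_1^* v_2^* v_2 v_3 v_3^* v_4^* v_4 \cdots$, the $1$-way infinite spanning path of $P \Box K_2$ that begins at $u$ and uses every vertical edge. The detours will be spliced into $P'$ at the vertical edges $a a^*$ for the representatives $a \in S$.

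The simplification relative to Theorem \ref{thm:bipartiteprismlocfin}, exactly as remarked for Theorem \ref{thm:onewaytwowalk}, is that since $G$ is $3$-connected every nontrivial $P$-bridge has \emph{exactly} three attachments (at least three by $3$-connectivity, at most three since $P$ is Tutte), so there are no two-attachment bridges and I only ever need part (a) of Lemma \ref{thm:bridgestructure}, never part (b). For each $a \in S$, let $L$ be the corresponding bridge, with attachments $a, b, c$. By Lemma \ref{thm:ksconnected}\ref{ksd}, $L$ is $(3, \{a, b, c\})$-connected, so Lemma \ref{thm:bridgestructure}(a) writes $L - \{b, c\}$ as a plane chain of circuit blocks $K = (a = b_0, B_1, b_1, \dots, b_{k-1}, B_k, b_k = d)$. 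Each $B_j$ is bipartite, being a subgraph of $G$, and is a circuit block, so Lemma \ref{thm:bipartitecircuit} (extended to allow blocks that are edges, where the prism is a $4$-cycle) gives a hamilton cycle of $B_j \Box K_2$ using the vertical edges at $b_{j-1}$ and $b_j$.

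I would then splice these cycles at the shared cut vertices: deleting the vertical edge $b_j b_j^*$ from the cycles in $B_j \Box K_2$ and $B_{j+1} \Box K_2$ turns each into a hamilton path from $b_j$ to $b_j^*$, and concatenating one with the reverse of the other yields a hamilton cycle of $(B_j \cup B_{j+1}) \Box K_2$ using the vertical edges at $b_{j-1}$ and $b_{j+1}$. Iterating produces a hamilton cycle of $K \Box K_2$ using the vertical edges at $a$ and $d$; deleting the one at $a$ leaves a spanning path of $(L - \{b,c\}) \Box K_2$ from $a$ to $a^*$, which I splice into $P'$ in place of the vertical edge $a a^*$. Carrying this out simultaneously for every $a \in S$ gives the claimed path: the vertices of $P$ and their copies are covered by $P'$, and the internal vertices of each bridge together with their copies are covered by the detours.

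The main point requiring care — though it is routine given Theorem \ref{thm:bipartiteprismlocfin} — is verifying that the spliced object is a single spanning $1$-way infinite path. Since $S$ is an SDR, the representatives are distinct and the bridges are internally disjoint; as in Theorem \ref{thm:onewaypathlocfin} (and by the same argument in the setting of Theorem \ref{thm:onewaypath}, since every $P$-bridge in $G$ is finite) each detour is a finite modification localized at a distinct vertical edge of $P'$, and the attachments $b, c$ lie on $P$ and are visited exactly once by $P'$ and never by a detour. Hence every vertex of $G \Box K_2$ is used exactly once, and because only finitely many edges of $P'$ are altered within any initial segment the result is a well-defined $1$-way infinite path beginning at $u$. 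The only place the hypotheses specific to this theorem are used is the reduction, via three-attachment bridges, to Lemma \ref{thm:bipartitecircuit} for the bipartite circuit blocks; everything else is identical to Theorem \ref{thm:bipartiteprismlocfin}.
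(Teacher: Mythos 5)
Your proposal is correct and matches the paper's proof, which is stated in one line as ``prove Theorem \ref{thm:bipartiteprismlocfin} again, using Theorem \ref{thm:onewaypath} instead of Theorem \ref{thm:onewaypathlocfin}''; you carry out exactly this substitution, including the paper's intended simplification (noted before Theorem \ref{thm:onewaytwowalk}) that $3$-connectivity forces every nontrivial $P$-bridge to have exactly three attachments, so only Lemma \ref{thm:bridgestructure}(a) and Lemma \ref{thm:bipartitecircuit} are needed. Your filled-in details of the splicing at vertical edges and the finiteness of the bridges are consistent with what the paper leaves implicit.
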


 We also want to give results for infinite versions of triangulations or
near-triangulations.  These are based on the following result for finite
graphs.

 \begin{lemma}[{\cite[Theorem 2.7]{bib:biebighauserellinghamprisms}}]
 \label{thm:circuittriangulation}
 Let $G$ be a finite near-triangulation and let $u, v$ be two
distinct vertices in $X_G$.  Then there is a hamilton cycle in $G \Box
K_2$ that uses the vertical edges at $u$ and $v$.
 \end{lemma}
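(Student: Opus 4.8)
The plan is to prove Lemma~\ref{thm:circuittriangulation} by strong induction on $|V(G)|$, in close parallel with the bipartite analog, Lemma~\ref{thm:bipartitecircuit}. There are two reduction types: splitting $G$ along a chord of the outer cycle $X_G$, or along a separating triangle. In either case the separating structure cuts $G$ into two smaller near-triangulations $G_1$ and $G_2$ whose intersection is a single edge (for a chord) or a single triangle, and the desired Hamilton cycle of $G \Box K_2$ is assembled by splicing together Hamilton cycles of $G_1 \Box K_2$ and $G_2 \Box K_2$ supplied by induction. When $G$ admits neither a chord nor a separating triangle it has no $2$-cut and no separating-triangle $3$-cut, so for $|V(G)| \geq 5$ it is $4$-connected; this irreducible case, together with the base case $G \isom K_3$, I would handle directly.

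For the irreducible case I would invoke the fact that a $4$-connected near-triangulation is Hamilton-connected (Thomassen's theorem that $4$-connected planar graphs contain a Hamilton path between any two prescribed vertices). Given a Hamilton $uv$-path $v_1 = u, v_2, \dots, v_n = v$ in $G$, the cycle $u\, v_2 \cdots v_n\, v_n^*\, v_{n-1}^* \cdots v_1^*\, u$ is a Hamilton cycle of $G \Box K_2$ using exactly the vertical edges at $u$ and $v$. This same device also settles the base case $G \isom K_3$, where $u\,w\,v$ serves as the $uv$-path and we recover the Hamilton cycle of the triangular prism.

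For the reductive step, consider a chord $ab$ of $X_G$ (the separating-triangle case being analogous, with the shared \emph{square} over $ab$ replaced by the prism over the triangle). It splits $G$ into near-triangulations $G_1, G_2$ with $G_1 \cap G_2$ equal to the edge $ab$. By induction each $G_i \Box K_2$ has a Hamilton cycle $H_i$ using the appropriate vertical edges; deleting the two shared edges of the square over $ab$ (the $4$-cycle $a\,b\,b^*\,a^*$) from each $H_i$ leaves, inside each $G_i \Box K_2$, two paths whose four endpoints all lie in $\{a, b, a^*, b^*\}$. Re-gluing these stubs at the shared vertices produces a $2$-regular spanning subgraph of $(G_1 \cup G_2) \Box K_2$, and this is a single Hamilton cycle exactly when the two path-pairings induced by $H_1$ and $H_2$ on $\{a, b, a^*, b^*\}$ are \emph{crossing} (rather than both separating $\{a,b\}$ from $\{a^*,b^*\}$), while also retaining the vertical edges at $u$ and $v$.

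The main obstacle will be precisely this compatibility of pairings: a Hamilton cycle of $G_i \Box K_2$ that merely uses the correct vertical edges need not meet the shared square in a way that yields a single cycle rather than two disjoint ones. I therefore expect the correct induction to prove a strengthened statement that, in addition to prescribing two vertical edges, controls how the Hamilton cycle interacts with a designated boundary edge and its square (for instance, forcing it to use the two vertical rungs, or the two horizontal edges, of that square). The work is then to propagate this extra control through both reductions, and to check that the $4$-connected Hamilton-path construction can also be arranged to meet a prescribed square correctly (by choosing the two ends of the Hamilton path to be the endpoints of the designated edge). The near-triangulation hypothesis is used at every stage to guarantee that the two pieces are again near-triangulations, so that the induction closes.
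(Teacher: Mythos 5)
There is a genuine gap, and it sits exactly at the step you state most confidently. The claim ``when $G$ admits neither a chord nor a separating triangle\dots\ for $|V(G)| \geq 5$ it is $4$-connected'' is false for near-triangulations: in a \emph{triangulation} every $3$-cut spans a triangle, but in a near-triangulation a $3$-cut may meet the outer cycle without spanning one. The wheel $W_4$ (outer $4$-cycle $abcd$ plus a hub $x$ adjacent to all four) is a near-triangulation with no chord of $X_G$ and no separating triangle, yet $\{a,c,x\}$ is a $3$-cut, so it is not $4$-connected; larger irreducible examples are easy to build (a chordless hexagon with two adjacent internal hubs, or $T-x$ for a $4$-connected triangulation $T$). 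So Thomassen's theorem does not cover your irreducible case, and your induction leaves a whole class of graphs unhandled, with no evident substitute for Hamilton-connectedness there. The second gap is one you flag yourself: the pairing compatibility at the shared square $a\,b\,b^*\,a^*$ (or prism over a shared triangle). Both parities of path-pairings really can occur, the statement being proved gives no control over how a Hamilton cycle of $G_i \Box K_2$ meets that square, and the strengthened inductive statement you would need is neither formulated nor shown to propagate through the chord reduction, the triangle reduction, and the irreducible case. As written, the proposal is a program with two open obligations, not a proof.

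For comparison: the present paper does not prove Lemma \ref{thm:circuittriangulation} at all --- it imports it from the authors' prism paper --- and the form of the statement already signals a different mechanism than yours. Prescribing \emph{vertical} edges at two boundary vertices is tailored to splicing at single shared \emph{vertices} rather than along shared edges: if two prism Hamilton cycles each use the vertical edge $bb^*$ at a common cutvertex $b$, deleting $bb^*$ from both and taking the union is automatically one Hamilton cycle, with no pairing case analysis whatsoever. That is how this paper deploys the lemma (see the proofs of Theorems \ref{thm:bipartiteprismlocfin} and \ref{thm:triangprismlocfin}, via the chain-of-blocks decompositions of Lemma \ref{thm:bridgestructure}), and it reflects the cited proof's Gao--Richter-style induction: delete a suitable boundary vertex, use $(3, X_G)$-connectivity to decompose the remainder into a plane chain of blocks --- each again a near-triangulation since internal faces are triangles --- splice the blocks' prism Hamilton cycles at vertical edges over the cutvertices, and reinsert the deleted vertex using the triangular faces. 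That route never creates the square-pairing problem your chord/separating-triangle decomposition runs into, which is precisely why the lemma is stated with two prescribed vertical edges in the first place.
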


 \begin{theorem}\label{thm:triangprismlocfin}
 Let $G$ be a connected $2$-indivisible infinite nicely embedded plane
graph with a net $N=(C_1, C_2, C_3, \dots)$.
 If $N$ is a radial net, let $\De = C_1$ and suppose that every face is
bounded by a triangle except perhaps one face bounded by $C_1$.
 If $N$ is a ladder net, let $\De = \partial N$ and suppose that every
finite walk bounding a face is a triangle.
 Then for every $u \in V(\De)$, $G \Box K_2$ contains a $1$-way infinite
spanning path beginning at $u$.
 \end{theorem}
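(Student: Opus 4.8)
The plan is to imitate the proof of Theorem~\ref{thm:bipartiteprismlocfin}, using the finite near-triangulation prism result Lemma~\ref{thm:circuittriangulation} in place of the bipartite prism result Lemma~\ref{thm:bipartitecircuit}. First I would apply Theorem~\ref{thm:onewaypathlocfin} to obtain a $1$-way infinite $\De$-Tutte path $P = v_1 v_2 v_3 \cdots$ in $G$ with $v_1 = u$, together with an SDR $S$ of its nontrivial bridges; as in that proof, every $P$-bridge is finite. Then $P' = v_1 v_1^* v_2^* v_2 v_3 v_3^* v_4^* v_4 \cdots$ is a $1$-way infinite path spanning $P \Box K_2$ and using every vertical edge $v_i v_i^*$. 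It remains, for each nontrivial $P$-bridge $L$ with representative $a \in S$, to absorb the vertices of $L \Box K_2$ that do not already lie on $P'$.

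For each such $L$ I would decompose it exactly as in the proof of Theorem~\ref{thm:onewaytwowalklocfin} via Lemma~\ref{thm:bridgestructure}: if $L$ has three attachments $a,b,c$ then $L - \{b,c\}$ is a plane chain of circuit blocks $K = (a = b_0, B_1, b_1, \dots, b_{k-1}, B_k, b_k = d)$, while if $L$ has two attachments $a,b$ (where, as shown there, $a,b \in V(\De)$) then $L - b$ is such a chain. In either case the deleted attachments lie on $V(P)$, so their prism-copies are already covered by $P'$, and Lemma~\ref{thm:bridgestructure} places all of their $L$-neighbours on the outer boundary of $K$. Note also that $V(B_i)$ meets $V(P)$ only in $a$, since the vertices of $K$ other than $a$ are internal vertices of $L$.

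The crux, which I expect to be the main obstacle, is to show that each nontrivial block $B_i$ of $K$ is itself a near-triangulation, so that Lemma~\ref{thm:circuittriangulation} applies to $B_i \Box K_2$. As a nontrivial block, $B_i$ is $2$-connected and $X_{B_i}$ is a cycle, so it suffices to prove that every internal face $f$ of $B_i$ is an undivided face of $G$, hence a triangle. No vertex of $G$ lies strictly inside $f$: such a vertex would have to lie on $P$, in another $P$-bridge, in a different block of $K$, or be a deleted attachment, and each of these lies in a region of the plane disjoint from the interior of $f$. No edge $xy$ of $G$ with $x,y \in V(B_i)$ can be a chord of $f$ outside $E(B_i)$ either: if both ends are internal to $L$ then $xy$ is an edge of $L$, avoids $b$ and $c$, and hence lies in $K$ and in the block $B_i$; the only vertex of $V(B_i)$ on $P$ is $a$, and every $G$-edge from $a$ to another vertex of $B_i$ is an attachment edge of $L$ and so again lies in $B_i$. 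Thus $f$ is a face of $G$, and since it is bounded (and, in the radial case, is not the exceptional face bounded by the whole cycle $C_1 = \De$, which no single bridge can contain), $f$ is a triangle. Trivial blocks need no such argument, since the prism over an edge is a $4$-cycle with a hamilton cycle through both vertical edges.

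Finally I would apply Lemma~\ref{thm:circuittriangulation} to each nontrivial $B_i$, obtaining a hamilton cycle of $B_i \Box K_2$ that uses the vertical edges at the two chain vertices $b_{i-1}$ and $b_i$ (using the $4$-cycle for each trivial block). Splicing consecutive block prisms by deleting the two shared copies of the vertical edge at each internal chain vertex $b_i$ and reconnecting gives a hamilton cycle of $K \Box K_2$ using the vertical edges at $a = b_0$ and $d = b_k$. Deleting $a a^*$ turns this into a hamilton path of $K \Box K_2$ from $a$ to $a^*$, which I splice into $P'$ in place of its vertical edge $a a^*$. Since $S$ is an SDR, each vertex of $P$ represents at most one bridge, so these detours occur at distinct vertices of $P'$ and are mutually compatible; together with $P'$ they cover every vertex of $G \Box K_2$ exactly once, yielding a $1$-way infinite spanning path of $G \Box K_2$ that begins at $u$.
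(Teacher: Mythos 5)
There is a genuine gap: your very first step invokes Theorem \ref{thm:onewaypathlocfin}, which carries the explicit hypothesis that $G$ is $(3,\De)$-connected, but Theorem \ref{thm:triangprismlocfin} assumes only that $G$ is \emph{connected} (contrast this with Theorem \ref{thm:bipartiteprismlocfin}, whose statement does assume $(3,\De)$-connectedness). Deriving $(3,\De)$-connectedness from the triangle-face conditions is in fact the main content of the paper's proof of this theorem. The paper first observes that any vertex $v$ lying in $I(C_i)-V(C_i)$ for some $i$ (every vertex in the radial case; every vertex off $\partial N$ in the ladder case) has each incident edge on two faces bounded by finite walks, hence on two triangles. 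It then supposes some $T$ with $|T|\le 2$ separates a component $K$ from $\De$ and derives a contradiction: for $|T|=\{t_1\}$, a face containing a path $v_0 t_1 v_1$ with $v_0\notin V(K)$, $v_1\in V(K)$ must be a triangle, yet its boundary walk can only pass between $K$ and its complement through $t_1$, forcing length at least $4$ or a repeated vertex; for $|T|=\{t_1,t_2\}$, the triangles flanking the fan of $K$-neighbors $v_1,\dots,v_k$ of $t_1$ are forced to be $t_2t_1v_1t_2$ and $t_2t_1v_kt_2$, so simplicity forces $N(t_1)=\{t_2,v_1,\dots,v_k\}$, whence $t_1\in V(\De)$, contradicting that $\De$ is $2$-regular and $v_1,\dots,v_k\notin V(\De)$. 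Without some such argument, your appeal to Theorem \ref{thm:onewaypathlocfin} (and, downstream, to Lemma \ref{thm:bridgestructure}) is unjustified, so the proposal as written is incomplete.

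The remainder of your proposal is sound and matches the paper's (largely one-sentence) treatment of the construction: decompose each nontrivial $P$-bridge via Lemma \ref{thm:bridgestructure} exactly as in Theorem \ref{thm:onewaytwowalklocfin}, apply Lemma \ref{thm:circuittriangulation} to each nontrivial block, splice the resulting hamilton cycles of the block prisms at vertical edges, and insert the resulting hamilton $aa^*$-path of $K\Box K_2$ into $P'$ in place of the vertical edge $aa^*$, with the SDR guaranteeing the detours occur at distinct vertices. Your detailed verification that each internal face of a block $B_i$ is a face of $G$ and hence a triangle---including the observation that the exceptional face bounded by $C_1$ in the radial case cannot occur as an internal face of a block, since a block omits at least one deleted attachment on any two-attachment bridge containing $\De$-edges---is a worthwhile elaboration of what the paper asserts in a single sentence. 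So the construction half is correct; the proof fails only because the connectivity hypothesis on which everything rests was never established.
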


 \begin{proof}
 \alttwo{%
 We show that $G$ is
$(3,\De)$-connected.  First notice that if $N$ is a radial net and
$v$ is any vertex, or if $N$ is a ladder net and $v \notin \partial N$,
then $v \in I(C_i) - V(C_i)$ for some $i$.  This follows from the
definition of a net.  Consequently, every edge incident with $v$ is
incident with two faces bounded by finite walks, since they are also
faces of the finite graph $I(C_i)$.

 Now suppose there is $T \subseteq V(G)$ with $|T| \le 2$ so that $G-T$
has a component $K$ with no vertex of $\De$.  Since $G$ is connected,
$|T| \ge 1$.

 Suppose first that $|T|=1$, with $T=\{t_1\}$.
 Then $t_1$ has neighbors $v_0$, $v_1$ consecutive in its rotation with
$v_0 \notin V(K)$ and $v_1 \in V(K)$.
 Let $f_0$ be the face containing $v_0 t_1 v_1$.
 If $N$ is a radial net, then $f_0$ is not bounded by $C_1$, since $v_1
\notin V(C_1)$.
 If $N$ is a ladder net, then $f_0$ is a face bounded by a finite walk
since $v_1 \notin \partial N$.
 In either case, $f_0$ is a triangle.
 Since $t_1$ is the only place where we can change between vertices in
$K$ and not in $K$, $f_0 = t_1 \ldots v_0 t_1 v_1 \ldots t_1$.
 Hence, $f_0$ has length at least $4$, and has a repeated vertex, either
of which is a contradication.

 Now suppose that $|T|=2$, with $T=\{t_1,t_2\}$.
 Then there is a sequence $v_0, v_1, \ldots, v_k, v_{k+1}$ of neighbors
of $t_1$ in clockwise order with $k \ge 1$, $v_1, v_2, \ldots, v_k \in
K$, and $v_0, v_{k+1} \notin K$; these vertices are distinct except
possibly $v_0 = v_{k+1}$.  
 Let $f_0$ be the face containing $v_0 t_1 v_1$ and $f_k$ the face
containing $v_k t_1 v_{k+1}$.
 By the same reasoning as in the case $|T|=1$, $f_0$ is a triangle.
 Now since $t_1$ and $t_2$ are the only places we can change between
vertices in $K$ and not in $K$, $f_0 = t_i \ldots v_0 t_1 v_1 \ldots
t_i$ where possibly $v_0 = t_i$, but $v_1 \ne t_i$. 
 Since $f_0$ is a triangle we must have $v_0 = t_2$ and $f_0 = t_2 t_1
v_1 t_2$.
 By similar reasoning, $v_{k+1}=t_2$ and $f_k = t_2 t_1 v_k t_2$.  Since
we cannot have two edges $t_1 t_2$, the neighbors of $t_1$ must be
exactly $t_2, v_1, v_2, \ldots v_k$.  Since, from above, $\{t_2\}$
cannot isolate a component with no vertex of $\De$, $t_1 \in V(\De)$. 
But $\De$ is $2$-regular, which means one of $v_1, v_2, \ldots, v_k$ is
in $\De$, a contradiction.
 
 Hence no such $T$ exists, and $G$ is $(3,\De)$-connected.

 }
 \altone{%
 Using arguments similar to the standard ones used to
show that finite triangulations are $3$-connected, we can show that $G$
is $(3,\De)$-connected; we omit the details.%
 } 
 We now proceed as in the proof of Theorem
\ref{thm:bipartiteprismlocfin}, using Theorem
\ref{thm:onewaypathlocfin}, with Lemma \ref{thm:circuittriangulation}
instead of Lemma \ref{thm:bipartitecircuit}.
 This works because our conditions on faces guarantee that every circuit
graph that we consider as a subgraph of a bridge $L$ of the $1$-way
infinite walk $P$ is actually a near-triangulation.
 \end{proof}

 We can also prove the following, using Theorem \ref{thm:onewaypath} and
Lemma \ref{thm:circuittriangulation}.  We say a face has \emph{bounded
extent} if it is bounded in the metric space sense.

 \begin{theorem}\label{thm:triangprism}
 Let $G$ be a $3$-connected $2$-indivisible infinite plane graph with a
nice embedding in which every
 finite walk bounding a face is a triangle.
 Let $F$ be the set of vertices of infinite degree in $G$.  If $F =
\emptyset$, then $G$ has a net $N$.  Let $u \in V(G)$ be arbitrary if
$N$ is a radial net, and let $u \in V(\partial N)$ if $N$ is a ladder
net.  Otherwise, let $u \in F$.
 Then $G \Box K_2$ contains a $1$-way infinite spanning path beginning
at $u$.

 As a special case, suppose $G$ is a locally finite $2$-indivisible
infinite plane graph in which every edge is incident with two
faces of bounded extent, each bounded by a triangle.
 Then for every $u \in V(G)$,
 $G \Box K_2$ contains a $1$-way infinite spanning path beginning
at $u$.
 \end{theorem}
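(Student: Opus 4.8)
The plan is to mirror the proof of Theorem \ref{thm:bipartiteprism}, but with the near-triangulation prism result Lemma \ref{thm:circuittriangulation} in place of the bipartite result Lemma \ref{thm:bipartitecircuit}. First I would apply Theorem \ref{thm:onewaypath} to produce a $1$-way infinite Tutte path $P = v_1 v_2 v_3 \cdots$ in $G$ with $v_1 = u$, together with an SDR $S$ of its nontrivial bridges; the prescription for $u$ (and the auxiliary endpoint $v$) is exactly the one in that theorem. Because $G$ is $3$-connected, every nontrivial $P$-bridge has exactly three attachments, so I never have to deal with a bridge of only two attachments. I then set $P' = v_1 v_1^* v_2^* v_2 v_3 v_3^* v_4^* v_4 \cdots$, the $1$-way infinite spanning path of $P \Box K_2$ that begins at $u$ and uses every vertical edge.

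Next, for each nontrivial $P$-bridge $L$ with representative $a \in S$ and attachments $a, b, c$, I would detour $P'$ through $L \Box K_2$. By Lemma \ref{thm:ksconnected}\ref{ksd}, $L$ is $(3, \{a,b,c\})$-connected, so Lemma \ref{thm:bridgestructure}(a) writes $K = L - \{b,c\}$, which is connected and contains every internal vertex of $L$, as a plane chain of circuit blocks $(a = b_0, B_1, b_1, \dots, b_{k-1}, B_k, b_k = d)$. The hypothesis that every finite facial walk is a triangle guarantees that each $B_i$ is a near-triangulation, exactly as in Theorem \ref{thm:triangprismlocfin}. Applying Lemma \ref{thm:circuittriangulation}, extended to allow a block that is a single edge (where $K_2 \Box K_2$ is a $4$-cycle), to each $B_i \Box K_2$ gives a hamilton cycle using the vertical edges at $b_{i-1}$ and $b_i$. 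Deleting each shared vertical edge $b_i b_i^*$ for $1 \le i \le k-1$ merges these cycles into a single hamilton cycle of $K \Box K_2$ that still uses $a a^*$; deleting $a a^*$ leaves a hamilton path of $K \Box K_2$ from $a$ to $a^*$, which I substitute for the edge $a a^*$ of $P'$.

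Since the nontrivial bridges are internally disjoint and the representatives are distinct (being an SDR), these substitutions affect distinct vertical edges of $P'$ and cover disjoint sets of new prism vertices, while every attachment of every bridge already lies on $P$ and hence on $P'$. The result is therefore a $1$-way infinite spanning path of $G \Box K_2$ beginning at $u$. For the special case I would check the hypotheses of the main statement. Local finiteness gives $F = \emptyset$, so $G$ (which is $2$-connected, since the link of each vertex is a cycle) has a net $N$ by Lemma \ref{thm:locallyfinitenet}. This net cannot be a ladder net: each edge of $\partial N$ would be incident with the single unbounded face lying to the left of $\overrightarrow{\partial N}$, contradicting the assumption that every edge is incident with two faces of bounded extent. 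Hence $N$ is radial and $\partial N = \emptyset$, so there is no boundary constraint on $u$; by Lemma \ref{thm:tightradialnet} I may take a tight radial net whose first cycle $C_1$ is a facial triangle through $u$, and then Theorem \ref{thm:triangprismlocfin} (radial case) yields the desired path.

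The part I expect to require the most care is not the splicing, which is routine and parallel to the bipartite case, but the two embedding-theoretic checks: verifying that every circuit block arising inside a finite bridge is genuinely a near-triangulation (so that Lemma \ref{thm:circuittriangulation} applies), and, in the special case, excluding ladder nets so that $u$ may be arbitrary. Both reduce to reading the bounded faces of $G$ off the nice embedding correctly, and both are settled by the hypothesis that all finite facial walks are triangles.
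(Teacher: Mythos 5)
Your proof of the main statement is essentially the paper's intended argument: the paper proves this theorem by running the proof of Theorem \ref{thm:triangprismlocfin} (equivalently, of Theorem \ref{thm:bipartiteprismlocfin}) with Theorem \ref{thm:onewaypath} supplying the $1$-way infinite Tutte path and SDR, and Lemma \ref{thm:circuittriangulation} (extended to blocks that are edges) supplying hamilton cycles in the prisms over the circuit blocks of Lemma \ref{thm:bridgestructure}(a), spliced at vertical edges and then substituted for the vertical edge $aa^*$ of $P'$ at each representative $a$. Your added observation that $3$-connectedness kills the two-attachment case of Lemma \ref{thm:bridgestructure} is correct and matches the paper's analogous remark before Theorem \ref{thm:onewaytwowalk}, and your justification that the blocks are near-triangulations is the same as the paper's.

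In the special case, however, you have omitted a step the paper's proof makes explicitly: verifying that the given embedding is \emph{nice}. This matters twice in your argument. First, Theorem \ref{thm:triangprismlocfin}, to which you reduce, hypothesizes a nicely embedded graph; you never establish this. Second, your exclusion of ladder nets (``the single unbounded face lying to the left of $\overrightarrow{\partial N}$'') presupposes the nice embedding, since the orientation $\overrightarrow{\partial N}$ and the geometry of the region left of $\partial N$ are only meaningful there; and you cannot simply invoke Lemma \ref{thm:niceembedding} and re-embed, because the hypothesis ``every edge is incident with two faces of \emph{bounded extent}'' is a metric condition on the original embedding that a re-embedding need not preserve. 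The paper closes this hole directly: the finite side of any cycle $C$ partitions into finitely many triangular faces, each bounded in the metric space sense, so $I(C)$ lies in the closed disk bounded by $C$ and the original embedding is already nice. Adding that observation (before your ladder-net exclusion and the appeal to Theorem \ref{thm:triangprismlocfin}) repairs the argument; the rest of your special-case route --- $2$-connectedness, Lemma \ref{thm:locallyfinitenet}, Lemma \ref{thm:tightradialnet} with $C_1$ a facial triangle through $u$ --- is a legitimate mild variant of the paper's (which instead proves $3$-connectedness as in Theorem \ref{thm:triangprismlocfin} and reduces to the main statement).
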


 \alttwo{
 \begin{proof}
 In the special case we can prove that the graph is $3$-connected by the
same reasoning we used in the previous theorem.  Therefore there is a
net, which must be a radial net, because for a ladder net $N$ the edges
of $\partial N$ fail the condition.  The embedding must be nice because
the side of any cycle with finitely many vertices partitions into a
finite number of triangles, each of which is bounded in the metric space
sense.
 \end{proof}

 }

\end{document}